\newtheorem{theorem}{Theorem}
\newtheorem{lemma}{Lemma}
\newtheorem{proposition}{Proposition}
\newtheorem{definition}{Definition}
\newtheorem{assumption}{Assumption}
\newtheorem{remark}{Remark}
\newtheorem{problem}{Problem}
\newtheorem{example}{Example}
\newcommand{\N}{\ensuremath{\mathbb{N}}}
\newcommand{\R}{\ensuremath{\mathbb{R}}}
\newcommand{\NN}{\ensuremath{\mathbb{N}}}
\newcommand{\RR}{\ensuremath{\mathbb{R}}}
\newcommand{\regparam}{\ensuremath{\beta}}
\newcommand{\argmin}{\mathrm{argmin}}
\renewcommand{\d}{\,\mathrm{d}}
\newcommand{\Leb}{\mathscr{L}} 
\newcommand{\spt}[1]{\mathrm{supp}(#1)}
\newcommand{\genvar}{\mathbf{z}}
\newcommand{\X}{X}
\newcommand{\Y}{Y}
\newcommand{\xx}{\mathbf{x}}
\newcommand{\pp}{\mathbf{p}}
\def\Mmat{\mathbf{M}}
\def\Imat{\mathbf{I}}
\def\sign{\mathrm{sign}}
\begin{document}

\title{ \LARGE \bf%
	Semi-algebraic approximation using Christoffel-Darboux kernel
}
\author{Swann Marx$^1$, Edouard Pauwels$^2$, Tillmann Weisser$^3$, Didier Henrion$^{4,5}$\\
 and Jean Bernard Lasserre$^{4,6}$}

\footnotetext[1]{LS2N, \'Ecole Centrale de Nantes $\&$ CNRS UMR 6004, F-44000 Nantes, France.}
\footnotetext[2]{IRIT-UPS, Universit\'e de Toulouse, 118 route de Narbonne, F-31400 Toulouse, France.}
\footnotetext[3]{Theoretical Division and Center for Nonlinear Studies, Los Alamos National Laboratory, Los Alamos NM 87545, USA}
\footnotetext[4]{LAAS-CNRS, Universit\'e de Toulouse, 7 avenue du colonel Roche, F-31400 Toulouse; France}
\footnotetext[5]{Faculty of Electrical Engineering, Czech Technical University in Prague, Technick\'a 4, CZ-16206 Prague, Czechia}
\footnotetext[6]{IMT-UPS, Universit\'e de Toulouse, 118 Route de Narbonne, F-31400 Toulouse, France}

\maketitle

\abstract{We provide a new method to approximate a (possibly discontinuous) function using Christoffel-Darboux kernels. Our knowledge about the unknown multivariate function is in terms of finitely many moments of the Young measure supported on the graph of the function. Such an input is available when approximating weak (or measure-valued) solution of optimal control problems, entropy solutions to non-linear hyperbolic PDEs, or using numerical integration from finitely many evaluations of the function. While most of the existing methods construct a piecewise polynomial approximation, we construct a semi-algebraic approximation whose estimation and evaluation can be performed efficiently. An appealing feature of this method is that it deals with non-smoothness implicitly so that a single scheme can be used to treat smooth or non-smooth functions without any prior knowledge. On the theoretical side, we prove pointwise convergence almost everywhere as well as convergence in the Lebesgue one norm under broad assumptions. Using more restrictive assumptions, we obtain explicit convergence rates. We illustrate our approach on various examples from control and approximation.
In particular we observe empirically that our method does not suffer from the the Gibbs phenomenon when approximating discontinuous functions.  
\\[1em]
	{\bf Keywords:} approximation theory, convex optimization, moments, positive polynomials, orthogonal polynomials.}\\[1em]
	\textbf{2020 MSC:} 42C05, 47B32, 41A30
\newpage

\tableofcontents
\newpage

\section{Introduction}\label{sec:intro}
In this paper we address the following generic inverse problem.
Let
\[
\begin{array}{rclcl}
f & : & \X & \to & \Y \\
&& \xx:=(x_1,\:x_2,\ldots,x_{p-1}) & \mapsto & y
\end{array}
\]
be a bounded measurable function from a given compact set $\X \subset\RR^{p-1}$ to a given compact set $\Y \subset\RR$, with $p\geq 2$. We assume that $\X$ is equal to the closure of its interior. 

Given $d \in \N$, consider a vector of polynomials of total degree at most $d$
\[
\mathbf{b}(\xx,y):\: (\xx,\:y) \in\mathbb{R}^p\mapsto \left(b_1(\xx,\:y) \quad b_2(\xx,\:y) \quad\cdots\quad b_{n_d}(\xx,\:y)\right)
\in\mathbb{R}^{n_d},
\]
 where $n_d:=\begin{pmatrix}
p+d\\d
\end{pmatrix}$ which is understood as the binomial coefficient. 
For example, $\mathbf{b}$ may be a vector whose entries are the polynomials of the canonical monomial basis or any orthonormal polynomial basis, e.g. Chebyshev or Legendre. Associated to $\mathbf{b}$ and $f$, let
\[
\int_{\X} \mathbf{b}(\xx,f(\xx))\mathbf{b}(\xx,f(\xx))^\top d\xx
\]
be the moment matrix of degree $2d$, where the integral is understood entry-wise.
 
\begin{problem}[Graph inference from moment matrix]
\label{eq:mainProblem}
Given the moment matrix of degree $2d$, compute an approximation $f_d$ of the function $f$, with convergence guarantees when degree $d$ tends to infinity.
\end{problem}

\subsection{Motivation} 

Inverse Problem \ref{eq:mainProblem} is encountered in several interesting situations.
In the weak (or measure-valued) formulation of Optimal Control problems (OCP) \cite{vinter1993convex,hernandez1996linear,lasserre2008nonlinear}, Markov Decision Processes \cite{jbl-ohl}, option pricing in finance \cite{finance}, stochastic control and optimal stopping \cite{stockbridge},
and some non-linear partial differential equations (PDEs) \cite{diperna1985measure}, non-linear non-convex problems are formulated as linear programming (LP) problems on occupation measures. Instead of the classical solution, the object of interest is a measure supported on the graph of the solution. Numerically, we optimize over finitely many moments of this measure.

Following the notation introduced above for stating Problem \ref{eq:mainProblem},
and letting
\[
\genvar:=(\xx,\:y) \in \R^p,
\]
the moment matrix of degree $2d$ reads
\[
\Mmat_{\mu,d} := \int \mathbf{b}(\genvar)\mathbf{b}(\genvar)^\top d\mu(\genvar)
\]
corresponding to the measure
\begin{equation}\label{measure}
d\mu(\genvar):=\mathbb{I}_\X(\xx)\,d\xx\,\delta_{f(\xx)}(dy)
\end{equation}
supported on the graph
\[
\{(\xx,f(\xx)):\xx\in\X\} \subset X\times Y
\]
of function $f$, where $\mathbb{I}_{\X}$ denotes the indicator function of $\X$ which takes value $1$ on $\X$ and $0$ otherwise, and $\delta_{f(\xx)}$ denotes the Dirac measure at $f(\xx)$.

For instance, in OCPs an optimal occupation measure $\mu$ is supported on the graphs of optimal state-control trajectories. In order to recover a particular state resp. control trajectory it suffices to consider the moments of the marginal of the occupation measure $\mu$ with respect to time-state resp. time-control. Then with our notation, $\xx$ is time and $y$ is a state resp. control coordinate. Similarly, for the measure-valued formulation of non-linear first-order scalar hyperbolic PDEs, an occupation measure is supported 
on the graph of the unique optimal entropy solution. Then with our notation, $\xx$ is time and space and $y$ is the solution. From the knowledge of the moments of the occupation measure, we want to approximate the solution.  The measure $\mu$ can be disintegrated into its marginal on $\X$ and its conditional on $\Y$ given $\mathbf{x}$ in $\X$. The latter is also called a parametrized measure or a Young measure, see e.g. \cite{fattorini1999infinite}. 

This weak formulation has been used in a number of different contexts to prove existence and sometimes uniqueness of solutions. It turns out that it can also be used for {effective computation} as it fits perfectly the LP-based methodology described in \cite{stockbridge2} and the Moment-SOS (polynomial sums of squares) methodology described in e.g. \cite{lasserre2008nonlinear,lasserre2009moments,lasserre2019moments}. In the latter methodology one may thus approximate the optimal solution $\mu$ of the measure-valued formulation by solving  a hierarchy of semidefinite relaxations of the problem, whose size increases with $d$; see e.g. \cite{lasserre2008nonlinear} for OCPs and \cite{marx2018moment} for non-linear PDEs. An optimal solution of each semidefinite relaxation is a finite matrix of pseudo moments (of degree at most $2d$) which approximate those of $\mu$. This approach allows to approximate values for the corresponding variational problems but it does not provide any information about the underlying minimizing solutions beyond moments of measures supported on these solutions. Therefore an important and challenging practical issue consists of recovering from these moments an approximation of the trajectories of the OCP or PDEs. This is precisely an instance of Problem \ref{eq:mainProblem}.

 Another potential  target application of our method is the optimal
transportation problem, see e.g. \cite[Chapter 1]{S15} and references
therein.  In its original formulation by Monge, it is a highly nonlinear nonconvex optimization problem. Its relaxation by Kantorovich is a
linear optimization problem on measures, and hence on moments if the
data are semialgebraic. Under convexity assumptions, this linear problem
  has a unique measure solution called optimal transportation plan, supported on the graph of a function called the optimal transportation map.  Our method can be used to approximate separately each coordinate of the transportation map by only considering the submatrix of moments associated with a suitable marginal, extracted from an optimal solution of the semidefinite relaxation (which considers \emph{all} pseudo-moments up to a given degree). In view of the form of $\mu$, one still obtains the required convergence guarantees (e.g. pointwise), under appropriate assumptions described in the paper.

More generally, moment information about the unknown function $f$ in the format of
Problem \ref{eq:mainProblem} is available when applying the Moment-SOS hierarchy \cite{lasserre2019moments} to solve Generalized Moment Problems where the involved Borel measures are Young  measures. The necessary moment information is also given when considering empirical measures \cite{pauwels2016sorting,lasserre2017empirical,pauwels2018spectral} if input data points lie on the graph of an unknown function $f$ (e.g., as is the case in interpolation). On the other hand, in some other applications like image processing or shape reconstruction, moment information is available only for the measure $f(\xx)\:d\xx$, i.e. $y \mapsto \mathbf{b}(\xx,y)$ is linear.  Finally, our method and results would apply to measures $\nu$ supported on the graph of $f$, different from $\mu$ in \eqref{measure}. Provided that $\nu$ and $\mu$ are mutually absolutely continuous with bounded densities, we would recover similar convergence results modulo constants. 

\subsection{Contribution}

We address Problem \ref{eq:mainProblem} by providing an algorithm to approximate a (possibly discontinuous) unknown function $f$ from the moment matrix of the measure \eqref{measure} supported on its graph. The approximation converges to $f$ (in a suitable sense described later on) when the number of moments tends to infinity.

\paragraph{Proposed approximation scheme}

As is well-known in approximation theory, the sequence of Christoffel-Darboux polynomials associated with a measure is an appropriate tool to approximate accurately the support of the measure, hence the graph of $f$ in our case. Christoffel-Darboux kernels and functions are closely related to orthogonal polynomials \cite{szeg1939orthogonal}, \cite{dunkl2014orthogonal} and approximation theory \cite{nevai1986geza}, \cite{de2014multivariate}. Their asymptotic behavior (i.e., when the degree goes to infinity) provides useful and even crucial information on the support and the density of the underlying measure. A quantitative analysis is provided in \cite{mate1980bernstein}, \cite{gustafsson2009bergman} for single dimensional problem and in \cite{kroo2012christoffel} in a multivariate setting. Even more recently, in \cite{pauwels2016sorting}, \cite{lasserre2017empirical} and \cite{pauwels2018spectral}, Christoffel-Darboux polynomials have been used to approximate the support of Borel measures in a multivariate setting in the context of Machine Learning and Data Science. 

We propose a simple scheme to approximate the graph of $f$ based on the knowledge of the moment matrix of degree $2d$ of $\mu$. To that end we first compute the Christoffel-Darboux polynomial using a spectral decomposition of the moment matrix. The Christoffel-Darboux polynomial is an SOS polynomial $q_d(\xx,y)$ of degree $2d$ in $p$ variables. For every fixed $\xx$ we define
\[
f_d(\xx):=\argmin_{y\in\Y} q_d(\xx,y)
\]
which is a semi-algebraic function\footnote{A semi-algebraic function is a function whose graph is semi-algebraic, i.e. defined with finitely many polynomial inequalities.}, assuming for the moment for the ease of exposition that the above argmin is uniquely defined. This class of functions is quite large. For example, all polynomials of degree at most $d$ can be expressed using this technique: let $r$ be a polynomial in $\xx$ of degree $d$, then $q:(\xx,y)\mapsto (r(\xx)-y)^2$ is a degree $2d$ SOS polynomial whose partial minimization in $y$ yields $y = r(\xx)$ for all $\xx$. However, this class contains many more functions, including non-smooth semi-algebraic functions such as signs or absolute values. In particular this class of functions can be used to describe efficiently discontinuous functions, a typical case encountered in e.g. OCP problems with bang-bang controls and solutions with shocks for non-linear PDEs.

\begin{example}[Sign function as SOS partial minimum]

    \begin{figure}
        \centering
        \includegraphics[width=0.7\textwidth]{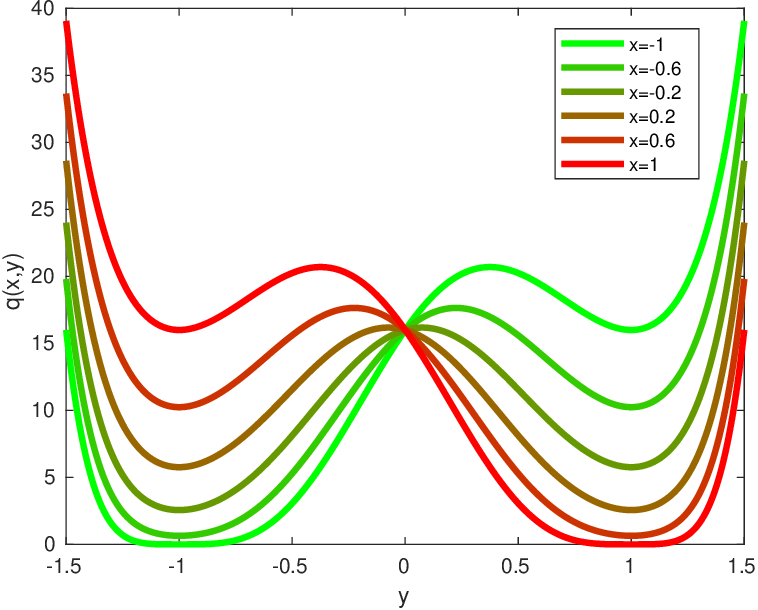}
        \caption{SOS polynomial $q(\xx,y)$ whose argmin in $y$ is the sign of $\xx$ on $[-1,1]$.}
        \label{fig:illustrsemi-algebraic}
    \end{figure}
    \label{ex:stepAbsVal}

    Consider the polynomial
    \begin{align}
        p_1 \colon\qquad \RR^2 &\mapsto \RR \nonumber\\
        (\xx,y) &\mapsto 4 - 3 \xx y - 4 y^2 + \xx y^3 + 2 y^4.
        \label{eq:illustrsemi-algebraic}
    \end{align}
    One can easily check that
\[
\argmin_{y \in \Y} p_1(\xx,y) = \sign(\xx) := \left\lbrace\begin{array}{ll}
-1 & \text{if}\quad \xx<0 \\
\{-1,1\} & \text{if}\quad  \xx=0 \\
1 & \text{if}\quad \xx>0 \\
\end{array}\right.
\]
for any $\xx \in \X:=[-1, 1]$ and $\Y\subset\RR$.
Note that since $p_1(\xx,\cdot)$ is positive for $\xx \in [-1,1]$, it can be squared without changing the argmin in $y$ and hence we obtain a similar representation of the sign function in the form of partial minimization in $y$ of the degree 8 SOS polynomial $q(\xx,y):=p_1^2(\xx,y)$, represented in Figure \ref{fig:illustrsemi-algebraic}.
\end{example}

\begin{example}[Absolute value as SOS partial minimum]
The reader may check that the argmin in $y$ of the (square of the positive) polynomial $11 - 12\xx^4 y - 6 \xx^2 y^2 + 4\xx^2y^3 +  3 y^4$ is equal to $|\xx|$ for all $\xx \in [-1,1]$.
\end{example}

To the best of our knowledge, this work is the first contribution where this class of semi-algebraic functions is used for graph approximations. The present work shows how these functions may be used to approximate discontinuous functions accurately.

\paragraph{Comparison to existing approximation approaches:}
A classical alternative to our approach would be to use 
$\Leb^2$-norm Legendre or Chebyshev approximations of the function $f$ which are also based on moment information. However these approaches only use moment information on the measure $f(\xx)\:d\xx$, i.e. $y \mapsto \mathbf{b}(\xx,y)$ is linear. Moreover, the support of this measure is not the graph of $f$. 

We claim that the full moment information provides useful additional data on the graph of $f$ which we can access using Christoffel-Darboux kernels associated with the measure $\mu$ in \eqref{measure}.
Note that in interpolation applications, we have the possibility to estimate the higher order moments of $f$ from finitely many evaluations of $f$ through Riemann integral approximations or Monte-Carlo approximations for example. However, in situations where we have neither access to higher moments nor pointwise evaluation of $f$, our method cannot be applied; 
signal processing applications are a typical case of the latter situations.

In general, approximating a discontinuous function $f$ is a challenge. Indeed, most well-known techniques suffer from the {Gibbs phenomenon}, i.e. the approximation produces oscillations at each point of discontinuity of $f$, see e.g. \cite{gottlieb1997gibbs} for a good survey on this topic. The main tools usually rely on properties of orthogonal polynomials \cite{szeg1939orthogonal} and the resulting approximations are based on a finite number of Fourier coefficients of the latter functions, i.e., typically first degree moment information on $f$. Projecting a discontinuous function on a class of infinitely smooth functions is the typical mechanism producing Gibbs phenomenon. In order to get rid of such a curse, {additional} techniques and prior information is needed. A possible approach is reported in \cite{eckhoff1993accurate} in the univariate case ($p=2$ in our notations), where a good approximation of locations of discontinuity points and jump magnitude is obtained by solving an appropriate (univariate) polynomial equation. Recent developments have effectively shown that such approaches may tame the Gibbs phenomenon \cite{batenkov2012algebraic,batenkov2015complete}. Iterative numerical methods can also be used to identify the points of discontinuities of $f$ (and of its derivatives) so as to construct accurate approximations locally in each identified interval, see e.g. \cite{pachon2010piecewise} in the case of Chebyshev polynomials. However such ad-hoc techniques are very specific to the univariate setting. 

On the contrary, our approach is not based on projection on a subspace of smooth functions, or identification of points of discontinuities of the function to be approximated. It is based on geometric approximation of the support of a singular measure using semi-algebraic techniques. An important feature of this approach is that the resulting approximant functions are not necessarily smooth, and furthermore, discontinuities (if any) can be treated implicitly only based on the whole moment information. As a result, we obtain a single approximation scheme, which (i) may adapt to the smoothness features of the target function $f$ without requiring prior knowledge of it, and (ii) can be used for multivariate $f$, both points being important challenges regarding numerical approximation.

\paragraph{Description of our contribution}
\begin{itemize}
\item[1.] We first provide a numerical scheme which allows to approximate the compact support of a measure which is singular with respect to the Lebesgue measure. We need to adapt the strategy of \cite{lasserre2017empirical} which covered the absolutely continuous case. This result may be considered of independent interest and will be instrumental to providing convergence guarantees for our approach.
\item[2.] Next, given a degree $d \in \NN$, we provide an approximation 
$f_d$ of the function $f$ and prove that the sequence $(f_d)_{d\in \NN}$ converges pointwise to $f$ almost everywhere as well as in $\Leb^1$-norm as $d$ goes to infinity (under broad assumptions on $f$). Furthermore, if we assume more regularity on $f$, then we also provide estimates for the rate of convergence. More precisely, we obtain $O(d^{-1/2})$ for multivariate Lipschitz functions and $O(d^{-1/4})$ for univariate functions of bounded variation.
\item[3.] Finally, we provide some numerical examples to illustrate the efficiency of the method. We first use our algorithm to approximate discontinuous or non-smooth solutions of OCP or PDE problems based on the Moment-SOS hierarchy. These experiments empirically demonstrate the absence of Gibbs phenomena.
We also provide an example where only samples of the measure under consideration are given and show that our algorithm also works well, even for moderate size samples, showing that moment input data could in principle be approximated using numerical integration methods.
\end{itemize}
\begin{example}[Sign function]
To give a flavor of what can be obtained numerically, consider the measure \eqref{measure} supported on the graph $\{(\xx,f(\xx)):\xx\in \X\}\subset \X\times\Y$ of the function $\xx\mapsto f(\xx):= \sign(\xx)$, with $\X:=\Y:=[-1,1]$. In Figure \ref{fig:ex1} (right) the resulting approximation $f_2$ with a moment matrix of size $6$ and degree $4$ (i.e. $15$ moments) cannot be distinguished from $f$. On the other hand, on the left, their Chebsyhev interpolants of degrees 4 and 20 (obtained with {\tt chebfun} \cite{chebfun}) illustrate the typical Gibbs phenomenon, namely oscillations near the discontinuity points that cannot be attenuated by increasing the degree. This phenomenon can be reduced or suppressed by identifying the discontinuity points and splitting $\X$ into intervals (as in e.g. \cite{pachon2010piecewise} and also implemented in {\tt chebfun}), but this strategy works only in the univariate case. In contrast, our algorithm does not attempt to approximate directly a univariate function with one or several univariate polynomials of increasing degree, but with the argmin of a bivariate polynomial of increasing degree. Moreover, our algorithm works also for multivariate functions, as shown by numerical examples later on.

 \begin{figure}
        \centering
        \includegraphics[width=.45\textwidth]{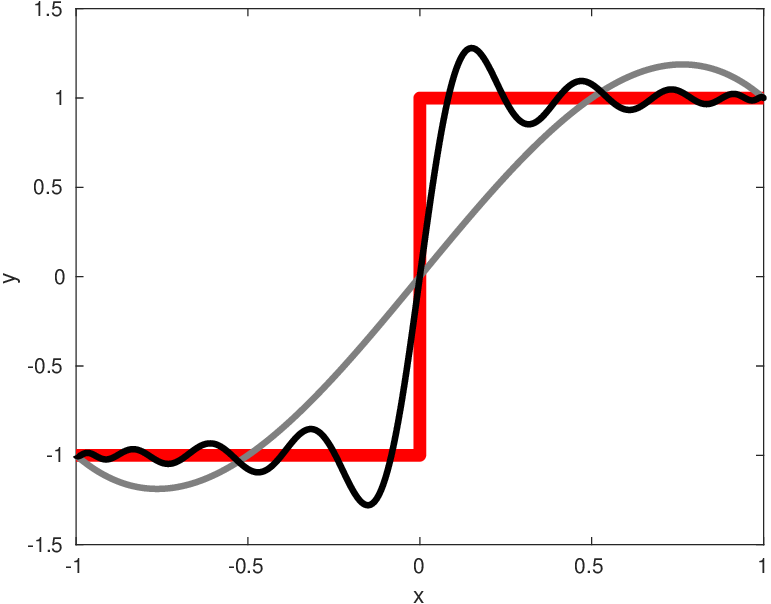}\includegraphics[width=.45\textwidth]{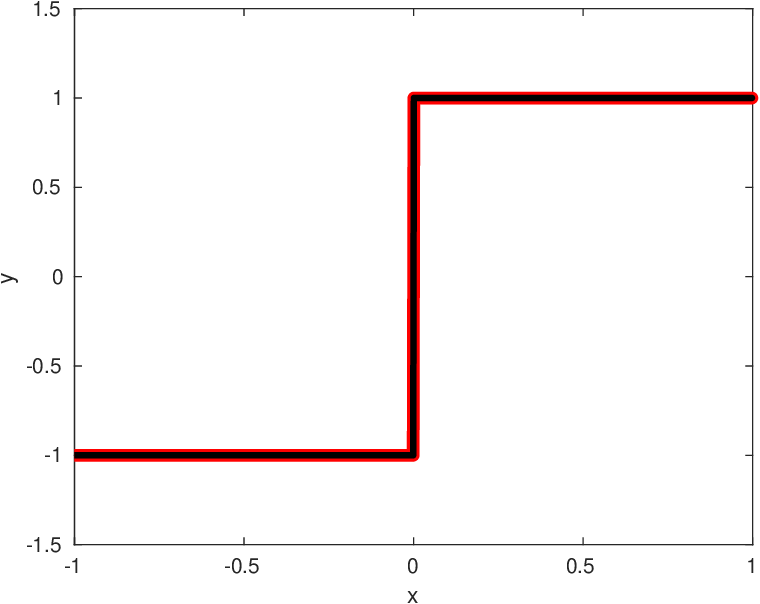}
        \caption{On the left, Chebyshev interpolant of degrees 4 (gray) and 20 (black) of the step function (red), featuring the typical Gibbs phenomenon. On the right, the proposed semi-algebraic approximation of degree 4 (black) of the same step function (red). The approximation cannot be distinguished from the step function.}
        \label{fig:ex1}
    \end{figure}
\end{example}

\subsection{Organization of the paper}

Section \ref{sec_problem} introduces the Christoffel-Darboux polynomial, its regularized version and our semi-algebraic approximant. In Section \ref{sec_main} the main results of the paper are collected, while their proofs are provided in Section \ref{sec_estimation}. More precisely, we first give some quantitative estimates on the support of the measure $\mu$ and then prove the $\Leb^1$ convergence of our approximant. Section \ref{sec_numeric} discusses computational issues and with the help of a simple Matlab prototype it illustrates the efficiency of our method on some examples. Finally, Section \ref{sec_conclusion} collects some concluding remarks together with further research lines to be followed.

\paragraph{Notation}

The Euclidean space of real-valued symmetric matrices of size $n$ is denoted by $\mathbb{S}^n$. Given a set $\X$ in Euclidean space, its diameter is denoted by $\mathrm{diam}(\X)$ and its volume, or Lebesgue measure, is denoted by $\mathrm{vol}(\X)$.
For $k\geq 1$, the Lebesgue space $\Leb^k(\X)$ consists of functions on $X$ whose $k$-norms are bounded.
Given a positive Borel measure $\mu$, we denote by $\spt{\mu}$ its support, defined as the smallest closed subset whose complement has measure zero. 

Throughout the paper, $p$ denotes the dimension of the ambient space.
Consistently with the notations introduced in Section \ref{sec:intro} for stating Problem \ref{eq:mainProblem}, we let $\genvar = (\xx \:\: y) \in \R^p$. We denote by $\R[\genvar]$ the algebra of multivariate polynomials of $\genvar \in \R^p$ with real coefficients. For a given degree $d\in\mathbb{N}$, the dimension of the vector space of polynomials of degree less than or equal to $d$ is denoted by $n_d:=\begin{pmatrix} p + d\\ d \end{pmatrix}$.

\section{Christoffel-Darboux approximation}
This section describes our main approximant based on the Christoffel-Darboux kernel. We first introduce notations and definitions, describe our regularization scheme for the Christoffel-Darboux kernel and then describe our functional approximant based on moments.

\label{sec_problem}
\subsection{Polynomials and moments}

Following the notations introduced for stating Problem \ref{eq:mainProblem}, any polynomial $q\in\R[\genvar]$ of degree $d$ can be expressed in the polynomial basis $\mathbf{b}(\genvar)$ as $q \colon \genvar \mapsto \mathbf{q}^\top \mathbf{b}(\genvar)$ with $\mathbf{q} \in\mathbb{R}^{n_d}$ denoting its vector of coefficients. Recall that the moment matrix of degree $2d$ of the measure $\mu$ reads
\[
\Mmat_{\mu,d}=\int \mathbf{b}(\genvar)\mathbf{b}(\genvar)^\top  d\mu(\genvar) \in \mathbb{S}^{n_d}.
\]
Since $\Mmat_{\mu,d}$ is positive semi-definite, it has a spectral decomposition 
\begin{equation}
				\label{eq:orthogonalDecomposition}
\Mmat_{\mu,d} = \mathbf{PEP}^\top,
\end{equation}
where $\mathbf{P} \in \R^{n_d\times n_d}$ is an orthonormal matrix whose columns are denoted $\mathbf{p}_i$, $i=1,2,\ldots,n_d$, and satisfy $\mathbf{p}_i^\top \mathbf{p}_i=1$ and $\mathbf{p}_i^\top \mathbf{p}_j=0$ if $i\neq j$, and $\mathbf{E} \in \mathbb{S}^{n(d)}$ is a diagonal matrix whose diagonal entries are eigenvalues $e_{i+1}\geq e_i \geq 0$ of the moment matrix. Each column $\mathbf{p}_i \in \R^{n_d}$ is the vector of coefficients of a polynomial $p_i\in\R[\genvar]$, $i=1,\ldots n_d$, so that
\begin{equation}\label{eq:ortho}
\begin{array}{llll}
				\mathbf{p}_i^\top \Mmat_{\mu,d}\,\mathbf{p}_i& = & e_i & = \int p_i^2(\genvar) d\mu(\genvar),\\
				\mathbf{p}_i^\top \Mmat_{\mu,d}\,\mathbf{p}_j&= & 0 & = \int p_i(\genvar) p_j(\genvar)d\mu(\genvar), \quad \,i \neq j.
\end{array}
\end{equation}

\subsection{Approximating the support from moments}

Let us assume for now that the support of the measure $\mu$ has nonempty interior, then $\Mmat_{\mu,d}$ is positive definite for any $d \in \NN$, i.e., $e_i > 0$, $i=1,\ldots, n_d$. In this case, one can define the Christoffel-Darboux polynomial
\begin{equation}
				q_{\mu,d}(\genvar):=\sum_{i=1}^{n_d} \frac{p^2_i(\genvar)}{e_i} = \mathbf{b}(\genvar)^\top \Mmat_{\mu,d}^{-1} \mathbf{b}(\genvar).
				\label{eq:CDpoly}
\end{equation}
It is known that sublevel sets of $q_{\mu,d}$ can be used to recover the support of $\mu$ for large $d$, see for example \cite{lasserre2017empirical} for an overview. 

The goal of this work is to approximate the function $f$. From a set theoretic perspective, this amounts to approximating the graph of $f$ whose closure is actually the support of the measure $\mu$ in \eqref{measure}. Hence the sublevel sets of $q_{\mu,d}$ are interesting candidates for this goal. However, in the case of the graph of a function, the construction given in \eqref{eq:CDpoly} is not valid anymore since this graph is a singular set so that $\Mmat_{\mu,d}$ may not be positive definite and invertible. In this singular setting, one should ideally consider the following extended value Christoffel-Darboux polynomial:
\begin{equation}
q^e_{\mu,d}(\genvar):=
\left\lbrace 
\begin{array}{ll}
+\infty &\text{ if } \exists i,\,e_i = 0,\, p_i(\genvar)  \neq 0\\
\sum_{e_i > 0} \frac{p^2_i(\genvar)}{e_i} = \mathbf{b}(\genvar)^\top \Mmat_{\mu,d}^\dagger \mathbf{b}(\genvar) &\text{ otherwise},
\end{array}
\right.	
\label{eq:CDpolySing}
\end{equation}
where $\dagger$ denotes the Moore-Penrose pseudo inverse. This is a natural extension of the Christoffel-Darboux polynomial to the singular case \cite{pauwels2018spectral} and amounts to working in the Zariski closure of the graph of $f$. 
\subsection{Regularization scheme}
\paragraph{Spectral filtering:}
Computing an object such as in \eqref{eq:CDpolySing} can be numerically sensitive since it essentially relies on pseudo-inverse which requires an eigenvalue thresholding scheme. This means that small perturbations of the moment matrix may lead to large changes of the output. Furthermore the candidate function takes finite values only on an algebraic set, and this situation is difficult to handle in finite precision arithmetic. One may rewrite the extended value polynomial \eqref{eq:CDpolySing} in the following form
\[
				q^e_{\mu,d}(\genvar) =\sum_{i=1}^{n_d} g(e_i) p^2_i(\genvar)
\]
where $ g \colon [0, +\infty) \mapsto [0, +\infty]$ with $g(s)= \frac{1}{s}$ for any $s >0$ and $g(0) = +\infty$. One approach to restore stability is to use regularization techniques which replace the pseudo-inversion expressed through the function $g$, by spectral filtering expressed through a different spectral function (see for example \cite{devito2014learning} for an illustration in the context of support estimation). Since the function $g$ is not regular, instead of studying the above defined extended value polynomial, one rather looks at the following polynomial
\begin{equation} \label{eq:CDpolyFilter}
				\sum_{i=1}^{n_d} g_\regparam(e_i) p^2_i(\genvar) 
\end{equation}
where $g_{\regparam}$ is a parametrized family of spectral filtering regular functions satisfying, for any $\regparam > 0$, $ g_\regparam \colon [0, +\infty) \mapsto [0, +\infty]$. Common examples include
\begin{align*}
				\text{Tikhonov regularization:} \qquad& g_\regparam \colon s \mapsto \frac{1}{\regparam + s},\\
				\text{Spectral cut-off:} \qquad& g_\regparam \colon s \mapsto \frac{1}{\regparam} \mathbb{I}_{[0,\regparam]}(s) + 				\frac{1}{s} \mathbb{I}_{(\regparam,+\infty)}(s),\\
				\text{Ideal low-pass:} \qquad& g_\regparam \colon s \mapsto \frac{1}{\regparam} \mathbb{I}_{[0,\regparam]}(s).
\end{align*}
We choose to work with the Tikhonov regularization as it has an intuitive measure space intepretation. We believe that our results can be generalized to different spectral filters.

\paragraph{Tikhonov regularization and measures:} Applying the Tikhonov spectral filter to \eqref{eq:CDpolyFilter} yields the following polynomial
\begin{align}
				\genvar \mapsto \sum_{i=1}^{n_d} \frac{p^2_i(\genvar)}{e_i+ \regparam}  = \mathbf{b}(\genvar)^\top (\Mmat_{\mu,d} + \regparam \Imat_{n_d})^{-1} \mathbf{b}(\genvar)
				\label{eq:regularizedCDpolyIntro}
\end{align}
where $\Imat_{n_d}$ denotes the identity matrix of size $n_d$.
In order to use analytic tools, we need to provide an interpretation of the addition of diagonal elements in terms of measures. One therefore has to choose a polynomial basis for which the diagonal matrix is the moment matrix of a reference Borel measure on $\R^p$ that we will denote $\mu_0$. We make the following assumption which will be standing throughout the paper.
\begin{assumption}[{\bf Reference measure and polynomial basis}]\hfill
\begin{itemize}
\item The reference measure $\mu_0$ is absolutely continuous with respect to the Lebesgue measure and it has compact support.
\item 
The polynomial basis $\mathbf{b}$ is orthonormal with respect to the bilinear form induced by $\mu_0$, that is  $\int b_i(\genvar)b_j(\genvar)d\mu_0(\genvar) = 1$ if $i=j$ and $0$ otherwise.
\end{itemize}
\label{ass:polynomialBasisMuZero}
\end{assumption}
The first part of Assumption \ref{ass:polynomialBasisMuZero} ensures that the moment matrix of $\mu_0$ is always positive definite. 
The second part of Assumption \ref{ass:polynomialBasisMuZero} provides the following relation:
\begin{align}
\Mmat_{\mu,d} + \regparam \Imat_{n_d} = \Mmat_{\mu + \regparam \mu_0,d}.
\label{eq:momentMatrixCorrespondance}
\end{align}

 An easy example of such a measure should be the following: considering a function $f$ whose domain of definition is contained in the unit cube of dimension $p-1$ and which takes values in $[-1,1]$, one might pick the uniform measure on the unit cube of dimension $p$, for which moments are easy to compute. 

Most importantly, using the notation in \eqref{eq:CDpoly}, this allows to express the polynomial of interest \eqref{eq:regularizedCDpolyIntro} as follows.
\begin{definition}
[{\bf Regularized Christoffel-Darboux polynomial}]
The regularized Christoffel-Darboux polynomial is the SOS
\begin{equation}
		\label{eq:regularizedCDpoly}
		q_{\mu+\regparam \mu_0,d}(\genvar) := \sum_{i=1}^{n_d} \frac{p^2_i(\genvar)}{e_i+\regparam }.
\end{equation}
\end{definition}
This provides a geometric interpretation of the regularization parameter as a combination of two measures: $\mu$ which is supported on the graph of the function of interest and $\mu_0$ which is a reference measure, used for regularization purposes.
The supported boundedness hypothesis in Assumption \ref{ass:polynomialBasisMuZero} will allow to provide quantitative estimates in further sections and it could in principle be replaced by a fast decreasing tail condition. An important example for measures satisfying Assumption \ref{ass:polynomialBasisMuZero} is the restriction of Lebesgue measure to the unit hypercube together with Legendre polynomials which form an orthonormal basis.

Making Assumption \ref{ass:polynomialBasisMuZero} is a slight restriction for which a few comments are in order. Firstly, this could be relaxed in various ways to remove the restriction on the polynomial basis, for example:
\begin{itemize}
				\item Replace the identity matrix by the moment matrix of $\mu_0$;
				\item Add assumption on the spectrum of the moment matrix $\mu_0$.
\end{itemize}
These would lead to a lot of technical complications and we find our results clearer and easier to state under Assumption \ref{ass:polynomialBasisMuZero}. Secondly, working numerically with polynomials in the standard monomial basis is problematic in many situations. Better conditioned polynomial bases are often those enjoying orthonormality properties with respect to a certain reference measure, such as e.g. Chebyshev or Legendre polynomials. We would like to argue here that the restrictions induced by  Assumption~\ref{ass:polynomialBasisMuZero} are quite benign since it is already common in practice to work in such polynomial bases for numerical reasons.

\subsection{Semi-algebraic approximant}

\begin{definition}[{\bf Semi-algebraic approximant}]\label{def:mainapproximant}
The regularized Christoffel-Darboux semi-algebraic approximant $f_{\regparam,d}$ is defined as follows:
\begin{equation}
\label{eq:mainapproximant}
				\mathbf{x} \in \X \mapsto  f_{\regparam,d}(\mathbf{x}) := \min \{ \displaystyle\mathrm{argmin}_{y\in\Y} \:\: q_{\mu + \regparam \mu_0,d}(\mathbf{x},y) \}.
\end{equation}
\end{definition}

\begin{remark}
				If $\X$ and $\Y$ are semi-algebraic, then the set-valued map which associates to each $\mathbf{x} \in \X$ the set
				\begin{equation*}
								\displaystyle\mathrm{argmin}_{y\in\Y} \:\: q_{\mu + \regparam \mu_0,d}(\mathbf{x},y)
				\end{equation*}
				is semi-algebraic. Recall that a map is semi-algebraic if its graph is semi-algebraic. By the Tarski--Seidenberg Theorem (see for example \cite[Theorem 2.6]{coste2002introduction}), any first order formula involving semi-algebraic sets describes a semi-algebraic set. Since minima are described by first order formulas, the argmin of a polynomial on the compact semi-algebraic set $\Y$ is a compact semi-algebraic subset of $\Y$, which is itself a subset of the real line. Hence the argmin set has a minimal element and the function in \eqref{eq:mainapproximant} is well defined.
				\label{rem:semi-algebraic}
\end{remark}

\begin{remark}
                For clarity of exposition we describe our main results by considering that the partial minimization in $y$ over $\Y$ is exact.
                As will be seen from arguments in the proof,  especially in the proof of Lemma \ref{lem:lemmaId}, approximate minimization up to a factor of the order $ d^{p+2}$ enjoys similar approximation properties.  Indeed, in Remark \ref{rem:robustPrecisionArgmin2}, we mention the precision $\frac{\gamma_d}{2}$, where $\gamma_d$ is chosen later on with more justifications in \eqref{eq:defGammad} to be of order $d^{p+2}$.  
                \label{rem:robustPrecisionArgmin}
\end{remark}

The two parameters $d$ and $\regparam$ control the behavior of the approximant $ f_{\regparam,d}$. In latter sections, we describe an explicit dependency between $d$ and $\regparam$ which allows to construct a sequence of regularization parameters $(\regparam_d)_{d\in \NN}$, and we investigate the asymptotic properties of the sequence of approximants $ \left( f_{\regparam_d,d} \right)_{d \in \NN}$.

\section{Main results}

\label{sec_main}
Our main theoretical contribution is an investigation of the relations between the function $f$ to be approximated and its regularized Christoffel-Darboux approximant $ f_{\regparam,d}$ under Assumption \ref{ass:polynomialBasisMuZero}. In particular we are interested in building an explicit sequence $\left( \regparam_d \right)_{d \in \RR}$ 
and investigating the convergence $f_{\beta_d,d}(\mathbf{x}) \to f(\mathbf{x})$  for 
$\mathbf{x} \in \X$, fixed, as well as the convergence $\|f -  f_{\beta_d,d} \|_{\Leb^1(\X)}\to 0$, when $d\to\infty$.

\subsection{Convergence under continuity assumptions} 
The following section describes our main result regarding convergence of the approximant $ f_{\regparam_d,d}$ in \eqref{eq:mainapproximant} under different regularity assumptions for the function $f$ to be approximated. Let us define
\[
\delta_0 := \mathrm{diam}(\spt{\mu + \mu_0}), \quad m:= \mu(\RR^p), \quad m_0:=\mu_0(\RR^p).
\]
\begin{theorem}
\label{thm-convergence} 
Under Assumption \ref{ass:polynomialBasisMuZero} and with the choice $\regparam_d = 2^{3 - \sqrt{d}}$ in Definition \ref{def:mainapproximant}, it holds:
\begin{itemize}
\item[(i)] If the set $S \subset \X$ of continuity points of $f$ is such that $\X \setminus S$ has Lebesgue measure zero, then
\[
 f_{\regparam_d,d}(\xx)\underset{d \to \infty}{\to} f(\xx)
\]
for almost all $\xx \in \X$, and
\[
\Vert f- f_{\regparam_d,d} \Vert_{\Leb^1(\X)} \underset{d \to \infty}{\to} 0.
\]
\item[(ii)] If $f$ is $L$-Lipschitz on $\X$ for some $L > 0$, then for any $d > 1$ and any $r > p$,
\begin{align*}
&\Vert f- f_{\regparam_d,d}\Vert_{ \Leb^1(\X)}\\
\leq\,&  \mathrm{vol}(\X)  \frac{\delta_0}{\sqrt{d} - 1}\left( 1 + L \right) + \mathrm{diam}(\Y) \frac{8(m+m_0)(3r)^{2r} e^{\frac{p^2}{d}}}{p^p e^{2r-p}d^{r-p} }.
\end{align*}
\end{itemize}
\end{theorem}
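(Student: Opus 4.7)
The strategy is the standard Christoffel--Darboux concentration principle combined with a split of $\X$ into ``good'' and ``bad'' subsets, tailored to the regularised setting. The key observation is that, for every fixed $\mathbf{x}$, the polynomial $y\mapsto q_{\mu+\regparam_d \mu_0,d}(\mathbf{x},y)$ is small along the graph of $f$ and grows away from it; since $f_{\regparam_d,d}(\mathbf{x})$ is the leftmost minimiser, the point $(\mathbf{x},f_{\regparam_d,d}(\mathbf{x}))$ must sit close to the graph, and Lipschitz regularity of $f$ converts ``close in the plane'' into ``close in $y$''.

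First, I would establish an upper bound of the form $q_{\mu+\regparam_d\mu_0,d}(\mathbf{x},f(\mathbf{x}))\le C_d$ that holds for \emph{most} $\mathbf{x}\in\X$. Using the identity $\Mmat_{\mu,d}+\regparam \Imat_{n_d}=\Mmat_{\mu+\regparam\mu_0,d}$ from Assumption~\ref{ass:polynomialBasisMuZero} and the reproducing formulation of $q$, one computes
\begin{equation*}
\int q_{\mu+\regparam_d\mu_0,d}\,d(\mu+\regparam_d\mu_0)\;=\;\mathrm{tr}\bigl(\Imat_{n_d}\bigr)\;=\;n_d,
\end{equation*}
so by a Markov-type inequality the set $B_d\subset\X$ where $q_{\mu+\regparam_d\mu_0,d}(\mathbf{x},f(\mathbf{x}))>T_d$ has $\mu$-measure at most $n_d/T_d$. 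Since the $\xx$-marginal of $\mu$ is Lebesgue restricted to $\X$, this translates into a volume bound on $B_d$. The threshold $T_d$ is a free parameter which will be optimised against the tail; the explicit combinatorial dependence in the second term of the theorem, namely $(3r)^{2r}e^{p^2/d}/(p^pe^{2r-p}d^{r-p})$ with the extra parameter $r>p$, is exactly what comes from estimating $n_d$ and optimising $T_d$ through Stirling-type inequalities.

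Next, for $\mathbf{x}\in A_d:=\X\setminus B_d$, I would exploit the quantitative off-support lower bound for the regularised Christoffel--Darboux polynomial promised as item~1 of the contributions and developed in Section~\ref{sec_estimation}. This estimate, with $\regparam_d=2^{3-\sqrt d}$, forces any $y\in\Y$ satisfying $q_{\mu+\regparam_d\mu_0,d}(\mathbf{x},y)\le T_d$ to lie at planar distance at most $\varepsilon_d=\delta_0/(\sqrt d-1)$ from $\mathrm{supp}(\mu)$. Because $f_{\regparam_d,d}(\mathbf{x})$ minimises $q_{\mu+\regparam_d\mu_0,d}(\mathbf{x},\cdot)$ over $\Y$ and $y=f(\mathbf{x})$ is feasible with value $\le T_d$ by the definition of $A_d$, one obtains $\bigl(\mathbf{x},f_{\regparam_d,d}(\mathbf{x})\bigr)$ within distance $\varepsilon_d$ of the graph of $f$. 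The $L$-Lipschitz assumption then gives
\begin{equation*}
|f_{\regparam_d,d}(\mathbf{x})-f(\mathbf{x})|\;\le\;(1+L)\,\varepsilon_d\qquad\text{for all }\mathbf{x}\in A_d.
\end{equation*}
Splitting $\int_\X|f-f_{\regparam_d,d}|\,d\mathbf{x}=\int_{A_d}+\int_{B_d}$, bounding the first integral by $\mathrm{vol}(\X)(1+L)\varepsilon_d$ and the second by $\mathrm{diam}(\Y)\mathrm{vol}(B_d)$, and inserting the volume estimate for $B_d$ with the total-mass factor $m+m_0$, produces exactly the bound of part~(ii).

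For part~(i) no rates are required: at every continuity point $\mathbf{x}\in S$, the same argument applied with shrinking thresholds $T_d$ and exploiting that $f$ is locally constant up to small oscillation gives $f_{\regparam_d,d}(\mathbf{x})\to f(\mathbf{x})$. Since $\X\setminus S$ is Lebesgue negligible, this yields almost-everywhere convergence, and the dominated convergence theorem (with dominating constant $\mathrm{diam}(\Y)$) upgrades it to $\Leb^1$ convergence. The main obstacle I anticipate is the quantitative off-support lower bound on $q_{\mu+\regparam_d\mu_0,d}$: the graph of $f$ is a lower-dimensional singular set, so the standard Lasserre--Pauwels arguments for absolutely continuous measures do not apply directly. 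The Tikhonov perturbation by $\regparam_d\mu_0$ is precisely the device that regularises the singular support into a shrinking tubular neighbourhood, and the delicate balance between the decay $\regparam_d=2^{3-\sqrt d}$ and the growth of $n_d$ is what enables both qualitative convergence in (i) and the explicit $O(d^{-1/2})$ bias term in (ii).
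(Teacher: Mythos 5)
Your proof of part (ii) follows the same route as the paper: a Markov/trace bound on the expectation of $q_{\mu+\regparam_d\mu_0,d}$ to control the measure of a "bad" set $B_d$, the needle-polynomial (off-support) lower bound on the regularised Christoffel--Darboux polynomial to show that on the complementary "good" set the point $(\mathbf{x},f_{\regparam_d,d}(\mathbf{x}))$ lies within $\delta_0/(\sqrt d-1)$ of $\mathrm{supp}(\mu)$, a Lipschitz step to convert planar proximity into $|f_{\regparam_d,d}(\mathbf{x})-f(\mathbf{x})|\le(1+L)\varepsilon_d$, and a split of the integral into the two contributions. Your Markov step integrates $q$ against $\mu+\regparam_d\mu_0$ to get the exact identity $\int q\,d(\mu+\regparam_d\mu_0)=n_d$ and then drops down to $\mu$; the paper integrates against $\mu$ alone, yielding $\sum_i e_i/(e_i+\regparam_d)\le n_d$. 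These are the same estimate, and your bad set $B_d=\{\mathbf{x}:q(\mathbf{x},f(\mathbf{x}))>T_d\}$ is (up to naming) the set $I_{S_d^c}$ that appears in the paper's Lemma~\ref{lem:lemmaId}. One small caveat worth making explicit: the threshold $T_d$ is not a free optimisation variable --- it must lie below $(1-\alpha)2^{\sqrt d-3}/(m+m_0)$ so that $S_d$ is swallowed by the tubular neighbourhood of the support (this is exactly the constraint encoded by Lemma~\ref{lem:techLem2}); the true free parameter is $r>p$, and $\gamma_d$ in the paper is chosen to saturate this constraint for each $r$. You allude to this "delicate balance," so I do not count it as a gap, only an imprecision in phrasing.

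Part (i) as you describe it has a real gap. You claim that at any fixed continuity point $\mathbf{x}\in S$ the convergence $f_{\regparam_d,d}(\mathbf{x})\to f(\mathbf{x})$ holds "by the same argument." This is not true: continuity of $f$ at $\mathbf{x}$ does not prevent $\mathbf{x}$ from lying in the bad set $B_d$ for infinitely many $d$, and when $\mathbf{x}\in B_d$ the mechanism that places $(\mathbf{x},f_{\regparam_d,d}(\mathbf{x}))$ near the graph breaks down entirely. The Markov bound controls $\mathrm{vol}(B_d)$ for each fixed $d$ but says nothing about a fixed point eventually escaping the bad sets. What is needed is a Borel--Cantelli step: choose $r$ large enough (the paper takes $r=p+2$) so that $\mathrm{vol}(B_d)=O(d^{-2})$ is summable, whence $\mathrm{vol}(\limsup_d B_d)=0$; only then does almost every $\mathbf{x}$ lie in $B_d^c$ for all $d$ large, and combining this with continuity of $f$ at almost every $\mathbf{x}$ gives the pointwise convergence. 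Your use of dominated convergence (rather than Egorov, which the paper uses) to upgrade a.e. pointwise convergence to $\Leb^1$ is perfectly fine and arguably slightly simpler, but the a.e. pointwise statement itself cannot be obtained without the summability argument you have omitted.
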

\begin{remark}\hfill
\begin{itemize}
\item As proved recently in \cite{silvestre2019oscillation}, the solutions to scalar conservation laws are continuous almost everywhere, i.e. the Lebesgue measure of the set of discontinuity points reduces to $0$. It is then clear that Item $(i)$ of Theorem \ref{thm-convergence} can be applied directly to the case of scalar conservation laws.
\item Thanks to Egorov's Theorem, pointwise convergence almost everywhere implies almost uniform convergence, that is uniform convergence up to a subset of $\X$ whose measure can be taken arbitrarily small. Since we manipulate bounded functions, this in turn implies convergence in $\Leb^1$.
\item For Lipschitz continuous functions, we obtain an $O(d^{-1/2})$ convergence rate in $\Leb^1$ norm by setting $r = p+1/2$.
\item The convergence rate for Lipschitz functions is slow and we observe in practice a much faster convergence. We conjecture that faster rates can be obtained for special classes of functions.
\end{itemize}
\label{rem:mainTheorem}
\end{remark}
Theorem \ref{thm-convergence} is a special case of a more general result described  in Theorem \ref{th:convergenceRobust}  and proven later on.

\subsection{Convergence for univariate functions of bounded variation}
Spaces of functions of bounded variations are of interest because many PDE problems are formulated on such spaces, see \cite{ambrosio2000functions} for an introduction. Modern construction of such spaces is done by duality through measure theoretic arguments. Our main proof mechanisms rely on pointwise properties of the function $f$, which are not completely captured by the measure theoretic construction. 

We prove $\Leb^1$ convergence for univariate bounded variation functions. The reason we are limited to the univariate setting is that we can use the classical definition of total variation which is directly connected to pointwise properties of the function of interest. We conjecture that the proposed approximation scheme is also convergent for multivariate functions of bounded variation, but we leave this question for future work.

\begin{definition}
Let $f \colon \RR \mapsto \RR$ be a measurable function. The (Jordan) total variation norm of $f$ is given by
\begin{align*}
V(f) = \sup_{n \in \NN}\sup_{x_0 <x_1<x_2<\ldots<x_n} \sum_{i=1}^n |f(x_i) - f(x_{i-1})|.
\end{align*}
\label{def:TV}
\end{definition}
\begin{theorem}
Under Assumption \ref{ass:polynomialBasisMuZero}, let $\X$ and $\Y$ be intervals of the real line, and assume that $V(f) < + \infty$.
With the choice $\regparam_d = 2^{3 - \sqrt{d}}$ in Definition \ref{def:mainapproximant}, for any $r >  2$ and for any $d > 1$, it holds
\begin{align*}
&\Vert f- f_{\regparam_d,d} \Vert_{\Leb^1(\X)}\\
\leq\,& \mathrm{vol}(\X) \left( \frac{2\delta_0}{\sqrt{d} - 1} + d^{-\frac{1}{4}} \right) \\
&+ \mathrm{diam}(\Y) \left( \frac{8(m+m_0) (3r)^{2r} e^{\frac{ 4}{d}}}{ 4 e^{2r- 2}d^{r- 2} } + \frac{4d^{\frac{1}{4}} V(f) \delta_0}{\sqrt{d} - 1 }\right).
\end{align*}
\label{th:BV1D} 
\end{theorem}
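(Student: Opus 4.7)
The plan is to upgrade the pointwise Christoffel-Darboux estimate that drives Theorem \ref{thm-convergence} (ii) by replacing the global Lipschitz control of $f$ with a local oscillation control derived from the total variation. I would work at two coupled scales: an oscillation threshold $\theta_d := d^{-1/4}$, which will be the source of the $d^{-1/4}$ term in the bound, and a spatial scale $h_d := \delta_0/(\sqrt{d}-1)$, matching the Lipschitz residual that already appears for smooth $f$. Partitioning $\X$ into consecutive subintervals $I_1,\ldots,I_N$ of length at most $h_d$, I would call $I_k$ \emph{tame} if $\sup_{I_k} f - \inf_{I_k} f \le \theta_d$ and \emph{wild} otherwise. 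Writing $T_d$ for the union of tame intervals and $B_d := \X \setminus T_d$, the inequality $V(f) \ge \sum_k (\sup_{I_k} f - \inf_{I_k} f)$ bounds the number of wild intervals by $V(f)/\theta_d$, hence $\mathrm{vol}(B_d) \le V(f)\, h_d/\theta_d = V(f)\,\delta_0\, d^{1/4}/(\sqrt{d}-1)$, which is exactly the shape of the $V(f)$-term in the stated bound.

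Next I would split
\[
\int_\X |f(\xx) - f_{\regparam_d,d}(\xx)|\,\mathrm{d}\xx = \int_{T_d} |f(\xx) - f_{\regparam_d,d}(\xx)|\,\mathrm{d}\xx + \int_{B_d} |f(\xx) - f_{\regparam_d,d}(\xx)|\,\mathrm{d}\xx.
\]
On $B_d$, the crude bound $|f - f_{\regparam_d,d}| \le \mathrm{diam}(\Y)$ combined with the measure estimate above produces (up to the constant 4) the term $\mathrm{diam}(\Y)\cdot 4\, d^{1/4} V(f)\delta_0/(\sqrt{d}-1)$. On $T_d$, for every $\xx$ lying in a tame $I_k$ the function $f$ varies by at most $\theta_d$ on $I_k$, and I would invoke the robust pointwise argument behind Theorem \ref{thm-convergence} (ii), namely the general estimate Theorem \ref{th:convergenceRobust} together with Lemma \ref{lem:lemmaId}, with the oscillation $\theta_d$ playing the role that $L\cdot h_d$ plays in the Lipschitz case. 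This should yield $|f_{\regparam_d,d}(\xx) - f(\xx)| \le 2\delta_0/(\sqrt{d}-1) + \theta_d$ plus the standard tail contribution. Integrating over $T_d \subset \X$ and specialising the tail to $p=2$ then recovers the summands $\mathrm{vol}(\X)\bigl(2\delta_0/(\sqrt{d}-1) + d^{-1/4}\bigr)$ and $\mathrm{diam}(\Y) \cdot 8(m+m_0)(3r)^{2r} e^{4/d}/(4\, e^{2r-2} d^{r-2})$, with the hypothesis $r>2$ being exactly $r>p$ in the present dimension.

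The main obstacle is checking that the pointwise estimate of Lemma \ref{lem:lemmaId} really survives this substitution: in the Lipschitz proof the inequality $|f(\xx)-f(\xx')| \le L|\xx-\xx'|$ is used to convert $y$-displacements of the argmin into $\xx$-displacements, and here I need to verify that the two points $\xx,\xx'$ entering the argument both lie in the same tame interval $I_k$, so that $|f(\xx)-f(\xx')| \le \theta_d$ can be substituted without further loss. This should follow from the fact that the effective localisation radius appearing in the proof of Lemma \ref{lem:lemmaId} is precisely of order $h_d$, together with the approximate-minimisation tolerance tracked in Remark \ref{rem:robustPrecisionArgmin}. Finally, the choice $\theta_d = d^{-1/4}$ is forced by balancing the tame and wild contributions: the former scales like $\theta_d$ while the latter scales like $V(f)h_d/\theta_d \sim 1/(\theta_d \sqrt{d})$, and equating the two forces $\theta_d^2 \sim 1/\sqrt{d}$, explaining the ultimate $d^{-1/4}$ rate.
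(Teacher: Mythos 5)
Your high-level strategy — split $\X$ into a region where the local oscillation of $f$ is controlled (giving the $d^{-1/4}$ contribution) and a small residual region whose Lebesgue measure is bounded via the total variation — is exactly the mechanism the paper uses. The paper, however, does not work with a fixed partition into intervals of length $h_d$: it defines the pointwise bad set $J_d = \{x : \exists u,\ |x-u|\le 2\delta_0/(\sqrt d-1),\ |f(u)-f(x)|>d^{-1/4}\}$, and bounds $\mathrm{vol}(J_d)$ by a greedy packing argument (Lemma~\ref{lem:packing}, giving $\mathrm{vol}(J_d)\le 2V(f)\cdot b/a$ with $b=2\delta_0/(\sqrt d-1)$, $a=d^{-1/4}$, whence the factor $4$ in the statement). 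This sidesteps the combinatorics of a fixed partition and directly delivers the window of the right radius.

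There is a genuine gap at the step you yourself flagged. The pointwise argument does not compare $f(\xx)$ with $f$ at a point in the same length-$h_d$ interval: after passing from $S_d$ to the closure of the graph, the nearby graph point $(h_d,f(h_d))$ used in \eqref{eq:mainIneqBV} satisfies only $|x-h_d|\le 2\delta_0/(\sqrt d-1)=2h_d$, not $h_d$. So the relevant localisation radius is $2h_d$, which straddles several of your tiles, and the condition ``$I_k$ is tame'' does not control $|f(\xx)-f(h_d)|$. Your own guess that ``the effective localisation radius ... is precisely of order $h_d$'' is off by the factor coming from the closure step. To repair the argument inside your framework you would need to declare a tile wild if it \emph{or any of its neighbours within $2h_d$} exceeds the oscillation threshold; since the tiles share endpoints, oscillation over the union of consecutive tiles is bounded by the sum of the tile oscillations, and the total variation still caps the number of wild windows, at the cost of the expected constant factor. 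This fix also recovers the factor $4$ rather than leaving it as an unexplained discrepancy. In short: right idea, right two scales, but the tame/wild criterion must be coarsened to match the radius $2\delta_0/(\sqrt d-1)$ that actually appears in the pointwise estimate — which is precisely what the paper's definition of $J_d$ builds in from the start.
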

We remark that we obtain a convergence rate in $O(d^{-1/4})$. This result is a special case of a more general result described  in Theorem \ref{th:BV1DRobust}  and proven later on.

\subsection{Robustness to small perturbations}

In many situations, one only has access to an approximation of the regularized Christoffel-Darboux polynomial. This is for example the case when applying the Moment-SOS hierarchy. At the end, one indeed obtains pseudo-moments of the measure under consideration, i.e. a vector of real numbers close to the actual moments of the measure.
The moment matrix is then not $\Mmat_{\mu + \regparam_d \mu_0}$ as in \eqref{eq:momentMatrixCorrespondance} but a matrix $\Mmat$ close to it. The effect of this perturbation on the Christoffel-Darboux polynomial is captured by the following result.

\begin{lemma}
Assume that the approximate moment matrix $\Mmat \in \mathbb{S}^{n_d}$ is positive definite and let
\[
\alpha:=\|\Imat_{n_d} -  \Mmat_{\mu + \regparam_d \mu_0}^{\frac{1}{2}} \Mmat^{-1} \Mmat_{\mu + \regparam_d \mu_0}^{\frac{1}{2}}  \|
\]
where we used the matrix operator norm. Then the polynomial $q^\alpha_d \colon \genvar \mapsto \mathbf{b}(\genvar)^\top \Mmat^{-1} \mathbf{b}(\genvar)$ satisfy
\begin{align*}
\sup_{\genvar \in \RR^p} \left|1 - \frac{q^\alpha_d(\genvar)}{q_{\mu + \regparam_d \mu_0,d}(\genvar)}\right| \leq \alpha.\end{align*}
\label{lem:approxMomentMatrix}
\end{lemma}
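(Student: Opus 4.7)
The statement is an algebraic inequality controlling a ratio of two positive quadratic forms. My plan is to rewrite the relative error as a Rayleigh quotient of a symmetric matrix and then appeal to the variational characterization of the operator norm.

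Fix $\genvar \in \RR^p$ and write $\uu := \mathbf{b}(\genvar)$. Under Assumption \ref{ass:polynomialBasisMuZero}, $\Mmat_{\mu + \regparam_d \mu_0,d}$ is positive definite (since it dominates $\regparam_d \Mmat_{\mu_0,d}$ and $\mu_0$ has positive definite moment matrix), so its symmetric positive definite square root $\Mmat_{\mu + \regparam_d \mu_0,d}^{1/2}$ is invertible. First I would introduce the change of variable $\mmon := \Mmat_{\mu + \regparam_d \mu_0,d}^{-1/2} \uu$, so that $\uu = \Mmat_{\mu + \regparam_d \mu_0,d}^{1/2}\mmon$. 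Substituting this into the ratio gives
\begin{align*}
1 - \frac{q^\alpha_d(\genvar)}{q_{\mu + \regparam_d \mu_0,d}(\genvar)}
&= \frac{\uu^\top \bigl(\Mmat_{\mu + \regparam_d \mu_0,d}^{-1} - \Mmat^{-1}\bigr)\uu}{\uu^\top \Mmat_{\mu + \regparam_d \mu_0,d}^{-1}\uu}\\
&= \frac{\mmon^\top \bigl(\Imat_{n_d} - \Mmat_{\mu + \regparam_d \mu_0,d}^{1/2}\Mmat^{-1}\Mmat_{\mu + \regparam_d \mu_0,d}^{1/2}\bigr)\mmon}{\mmon^\top \mmon}.
\end{align*}

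The matrix $A := \Imat_{n_d} - \Mmat_{\mu + \regparam_d \mu_0,d}^{1/2}\Mmat^{-1}\Mmat_{\mu + \regparam_d \mu_0,d}^{1/2}$ is symmetric, since $\Mmat_{\mu + \regparam_d \mu_0,d}^{1/2}$ and $\Mmat^{-1}$ are symmetric. For a symmetric matrix, the supremum of $|\mmon^\top A\mmon|/\|\mmon\|^2$ over $\mmon \neq 0$ equals the spectral (operator) norm $\|A\|$. Hence for every $\genvar$ with $\uu \neq 0$ (so $\mmon \neq 0$), the quantity displayed above is bounded in absolute value by $\|A\| = \alpha$, which is exactly the claimed inequality. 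For $\genvar$ with $\uu = 0$ both numerator and denominator vanish; this degenerate case does not enter the supremum and can be excluded (or handled by a limiting argument, noting $\uu$ is not identically zero since $\mathbf{b}$ contains a nonzero constant polynomial).

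\textbf{Main obstacle.} There is no substantive obstacle: the only point requiring a bit of care is verifying that the denominator $q_{\mu+\regparam_d\mu_0,d}(\genvar) = \uu^\top \Mmat_{\mu + \regparam_d \mu_0,d}^{-1}\uu$ is strictly positive whenever $\uu \neq 0$ (so that the ratio is well defined), which follows from positive definiteness of $\Mmat_{\mu+\regparam_d\mu_0,d}$, and confirming symmetry of $A$ to justify the Rayleigh-quotient identification with the operator norm. Beyond that the argument is a one-line linear-algebraic manipulation.
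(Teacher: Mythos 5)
Your proof is correct and follows essentially the same route as the paper's: both symmetrize with $\Mmat_{\mu+\regparam_d\mu_0,d}^{\pm 1/2}$ to recast the relative error as a Rayleigh quotient of $\Imat_{n_d} - \Mmat_{\mu+\regparam_d\mu_0,d}^{1/2}\Mmat^{-1}\Mmat_{\mu+\regparam_d\mu_0,d}^{1/2}$ and then invoke the operator-norm bound on that quotient. The only additions on your side are explicit remarks that the denominator is positive and that $\mathbf{b}(\genvar)\neq 0$, which the paper leaves implicit.
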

\begin{proof}:
For any $\genvar \in \RR^p$, we have
\begin{align*}
&\left|\frac{q^\alpha_d(\genvar)}{q_{\mu + \regparam_d \mu_0 ,d}(\genvar)} -1\right| \\
=\quad & \frac{1}{q_{\mu + \regparam_d \mu_0 ,d}(\genvar)}\left| \mathbf{b}(\genvar)^\top (\Mmat^{-1} - \Mmat_{\mu + \regparam_d \mu_0}^{-1}) \mathbf{b}(\genvar)\right|\\
= \quad &\frac{1}{q_{\mu + \regparam_d \mu_0 ,d}(\genvar)}\left| (\Mmat_{\mu + \regparam_d \mu_0}^{-\frac{1}{2}} \mathbf{b}(\genvar))^\top (\Imat_{n_d} -  \Mmat_{\mu + \regparam_d \mu_0}^{\frac{1}{2}} \Mmat^{-1} \Mmat_{\mu + \regparam_d \mu_0}^{\frac{1}{2}} ) \Mmat_{\mu + \regparam_d \mu_0}^{-\frac{1}{2}}  \mathbf{b}(\genvar)\right|\\
\leq\quad & \left\| \Imat_{n_d} -  \Mmat_{\mu + \regparam_d \mu_0}^{\frac{1}{2}} \Mmat^{-1} \Mmat_{\mu + \regparam_d \mu_0}^{\frac{1}{2}}  \right\| = \alpha.
\end{align*}
\end{proof}

\begin{remark}[Accuracy of the moments]
In this paper, we assume the existence of a positive number $\alpha$ which makes the link between an approximated moment matrix and the real one, which exists in general, but estimates for $\alpha$ are not known in general. Such an analysis has been performed for the moments themselves in \cite{nie2007complexity}, but not for moment matrices. Indeed, the bounds linking moment matrices and their corresponding depend nonlinearly on too many variables to obtain easily bounds on the approximated moment matrix and the real one. This is a topic of further investigation.
\end{remark}

More generally, we can consider a robust Christoffel-Darboux function satisfying the following inequality.

\begin{assumption}
For a given $\alpha \in [0,1)$, let $(\regparam_d)_{d\in \NN}$ be a sequence of positive numbers and $(q^\alpha_d)_{d \in \NN}$ be a sequence of continuous functions over $\RR^p$ such that for any $d \in \NN$ and any $\genvar \in \RR^p$, we have
\begin{align*}
\left|1 - \frac{q^\alpha_d(\genvar)}{q_{\mu + \regparam_d \mu_0,d}(\genvar)}\right|\leq \alpha.
\end{align*}
\label{ass:perturbation}
\end{assumption}
Note that Assumption \ref{ass:perturbation} ensures that
\begin{align}
(1-\alpha) q_{\mu + \regparam_d \mu_0,d}(\genvar) \leq q^\alpha_d(\genvar) \leq (1+\alpha) q_{\mu + \regparam_d \mu_0,d}(\genvar).
\label{eq:perturbation}
\end{align}
Furthermore, one can always choose $q^\alpha_d(\genvar)= q_{\mu + \regparam_d \mu_0,d}(\genvar)$ and $\alpha = 0$ which corresponds to the nominal case. The robust approximant is then defined similarly as in Definition \ref{def:mainapproximant}.

\begin{definition}[{\bf Robust semi-algebraic approximant}]\label{def:mainapproximantRobust}
Given a degree $d \in \NN$, a regularizing parameter $\regparam > 0$, and a scalar $\alpha \in [0,1)$, our robust approximant $ f^\alpha_{\regparam_d,d} $
is defined as follows:
\begin{equation}
\label{eq:mainapproximantRobust}
\mathbf{x} \in \X \mapsto  f^\alpha_{\regparam_d,d}(\mathbf{x})  := \min \{\displaystyle\mathrm{argmin}_{y\in\Y} q^\alpha_d(\mathbf{x},y) \}.
\end{equation}
\end{definition}

The main technical result of this paper is the following from which Theorem \ref{thm-convergence} directly follows.
\begin{theorem}
Under Assumptions \ref{ass:polynomialBasisMuZero} and \ref{ass:perturbation}, and with the choice $\alpha \in [0,1)$ and $\regparam_d = 2^{3-\sqrt{d}}$ in Definition \ref{def:mainapproximantRobust}, it holds:
\begin{itemize}
\item[(i)] If the set $S \subset \X$ of continuity points of $f$ is such that $\X \setminus S$ has Lebesgue measure zero, then
\begin{align*}
& f^\alpha_{\regparam_d,d}(\mathbf{x}) \underset{d \to \infty}{\to} f(\mathbf{x})
\end{align*}
for almost all $\mathbf{x} \in \X$, and
\begin{align*}				
&\Vert f- f^\alpha_{\regparam_d,d}\Vert_{\Leb^1(\X)} \underset{d \to \infty}{\to} 0.
\end{align*}
\item[(ii)] If $f$ is $L$-Lipschitz on $\X$ for some $L > 0$, then for any $d > 1$ and any $r > p$,
\begin{align*}
&\Vert f- f^\alpha_{\regparam_d,d}\Vert_{ \Leb^1(\X)}\\
\leq\,&  \mathrm{vol}(\X)  \frac{\delta_0}{\sqrt{d} - 1}\left( 1 + L \right) + \mathrm{diam}(\Y)\frac{1+\alpha}{1 - \alpha} \frac{8(m + m_0)(3r)^{2r} e^{\frac{p^2}{d}}}{p^p e^{2r-p}d^{r-p}}.
\end{align*}
\end{itemize}
\label{th:convergenceRobust}
\end{theorem}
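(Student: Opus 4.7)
The plan is to reduce the robust statement to a pointwise analysis of the regularized Christoffel-Darboux polynomial $q_{\mu+\regparam_d \mu_0, d}$ and then use the sandwich inequality \eqref{eq:perturbation} to absorb the perturbation $\alpha$ into the constants. Concretely, fix $\mathbf{x} \in \X$ and set $y^\alpha := f^\alpha_{\regparam_d,d}(\mathbf{x})$. By definition of the argmin, $q^\alpha_d(\mathbf{x}, y^\alpha) \leq q^\alpha_d(\mathbf{x}, f(\mathbf{x}))$, and inserting Assumption \ref{ass:perturbation} on both sides gives
\[
(1-\alpha)\, q_{\mu+\regparam_d \mu_0, d}(\mathbf{x}, y^\alpha) \leq (1+\alpha)\, q_{\mu+\regparam_d \mu_0, d}(\mathbf{x}, f(\mathbf{x})).
\]
Hence any control showing that the sublevel sets of $q_{\mu+\regparam_d \mu_0,d}(\mathbf{x}, \cdot)$ concentrate near $y=f(\mathbf{x})$ translates directly to a bound on $|y^\alpha - f(\mathbf{x})|$, paying only the factor $(1+\alpha)/(1-\alpha)$ appearing in the statement.

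The next step is to assemble two complementary quantitative facts about $q_{\mu+\regparam_d \mu_0, d}$. First, an \emph{upper bound} at graph points: the identity $\int q_{\mu + \regparam_d \mu_0, d}\, d(\mu + \regparam_d\mu_0) = n_d$ combined with Markov's inequality forces $q_{\mu+\regparam_d\mu_0, d}(\mathbf{x}, f(\mathbf{x}))$ to be of order $n_d/\regparam_d$ outside a ``bad'' set $B_d \subset \X$ of small Lebesgue measure. Second, a \emph{lower bound} away from the graph: for $y$ with $|y-f(\mathbf{x})| \geq \rho$, one exhibits (as is classical for Christoffel-Darboux kernels, rescaled to the interval of diameter $\delta_0$) a degree-$d$ test polynomial that is small in $L^2(\mu + \regparam_d \mu_0)$ but large at $(\mathbf{x}, y)$, forcing $q_{\mu+\regparam_d \mu_0, d}(\mathbf{x},y) \gtrsim (\delta_0/\rho)^{2d}/(m+m_0+\regparam_d\cdot\text{const})$. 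This is exactly the type of estimate proven in item 1 of the paper's contributions and encapsulated in lemmas such as Lemma \ref{lem:lemmaId} referenced in Remark \ref{rem:robustPrecisionArgmin}.

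Comparing the two bounds (with the $(1+\alpha)/(1-\alpha)$ factor) yields, for every $\mathbf{x} \notin B_d$, a deterministic estimate $|y^\alpha - f(\mathbf{x})| \leq \rho_d$ with $\rho_d = O(\delta_0/\sqrt{d})$ once the free parameters are balanced by the choice $\regparam_d = 2^{3-\sqrt{d}}$. One then splits
\[
\|f - f^\alpha_{\regparam_d,d}\|_{\Leb^1(\X)} \leq \int_{\X\setminus B_d} |f(\mathbf{x}) - f^\alpha_{\regparam_d,d}(\mathbf{x})|\, d\mathbf{x} + \mathrm{diam}(\Y)\,\mathrm{vol}(B_d).
\]
For part (ii), on $\X \setminus B_d$ one uses the pointwise bound $|y^\alpha-f(\mathbf{x})|\leq \rho_d$ together with the $L$-Lipschitz property (which accounts for the $(1+L)$ factor when comparing $f$ at nearby points), producing the first term $\mathrm{vol}(\X)\delta_0(1+L)/(\sqrt{d}-1)$. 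The bound on $\mathrm{vol}(B_d)$ coming from the integral identity, together with the lower-bound constants, yields the second term featuring $(3r)^{2r}/d^{r-p}$ and the factor $(1+\alpha)/(1-\alpha)$. For part (i), the Lipschitz bound is replaced by continuity: on $\X \setminus B_d$, continuity at a.e.\ point $\mathbf{x}$ lets $\rho_d \to 0$ imply $f^\alpha_{\regparam_d,d}(\mathbf{x}) \to f(\mathbf{x})$; pointwise convergence a.e. and boundedness of $\Y$ then give $\Leb^1$ convergence by dominated convergence.

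The \textbf{main obstacle} is the lower bound (b) in the \emph{singular} measure setting. Standard Christoffel-Darboux lower bounds away from the support rely on $\mu$ being absolutely continuous with a density bounded below, which is precisely what fails here since $\mu$ is supported on the graph of $f$. The regularization $\regparam_d \mu_0$ is introduced exactly to restore a usable bound, and the delicate point is to choose $\regparam_d$ small enough that $\mu + \regparam_d \mu_0$ still concentrates near the graph to give concentration of $y^\alpha$, yet large enough that the moment matrix $\Mmat_{\mu+\regparam_d\mu_0,d}$ is sufficiently invertible to admit quantitative lower bounds. This trade-off is what dictates the rate $\regparam_d = 2^{3-\sqrt{d}}$ and is the true technical heart of the argument; once it is in place through the preceding lemmas, the remainder of the proof is the splitting and integration sketched above.
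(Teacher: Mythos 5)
Your overall route matches the paper's: upper bound on $q_{\mu+\regparam_d\mu_0,d}$ at graph points via Markov's inequality applied to $\int q_{\mu+\regparam_d\mu_0,d}\,d\mu \leq n_d$; lower bound away from $\spt{\mu}$ via a rescaled needle polynomial; comparison of the two thresholds, which the sandwich \eqref{eq:perturbation} degrades only by the factor $(1+\alpha)/(1-\alpha)$; then a good/bad split of $\X$ and integration, with Lipschitz continuity turning ``the approximant sits close to the graph'' into a bound on $|f^\alpha_{\regparam_d,d}(\mathbf{x})-f(\mathbf{x})|$. Some imprecisions worth correcting but which do not alter the structure: the needle polynomial yields a lower bound of the form $2^{\sqrt{d}-3}/(m+m_0)$ at distance $\delta_0/(\sqrt{d}-1)$ from $\spt{\mu}$, not the Chebyshev-style $(\delta_0/\rho)^{2d}$; the threshold paired with it via Markov is $\gamma_d$ from \eqref{eq:defGammad}, which is polynomial in $d$, not $n_d/\regparam_d$; and the lower bound is triggered by $\mathrm{dist}\big((\mathbf{x},y),\spt{\mu}\big)\geq\rho$, not by $|y-f(\mathbf{x})|\geq\rho$ --- the nearest graph point to $(\mathbf{x},f^\alpha_{\regparam_d,d}(\mathbf{x}))$ need not sit over $\mathbf{x}$, which is exactly why the Lipschitz (or continuity) hypothesis is needed and why the $(1+L)$ factor appears.

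For part (i) there is a genuine gap. You pass from ``for each $d$, the estimate $|f^\alpha_{\regparam_d,d}(\mathbf{x})-f(\mathbf{x})|\leq\rho_d$ holds outside a set $B_d$ with $\mathrm{vol}(B_d)\to 0$'' to ``$f^\alpha_{\regparam_d,d}(\mathbf{x})\to f(\mathbf{x})$ for a.e.\ $\mathbf{x}$.'' This does not follow as stated: the exceptional set $B_d$ changes with $d$, and a fixed $\mathbf{x}$ may belong to $B_d$ for infinitely many $d$, along which subsequence nothing is controlled. The paper closes this with a Borel--Cantelli step: it fixes $r=p+2$ in $\gamma_d$ so that $\mathrm{vol}(I_d)=O(d^{-2})$ is summable (here $I_d$ of \eqref{eq:defId} is the paper's analogue of your $B_d$), sets $I:=\cap_{d_0\in\NN}\cup_{d\geq d_0}I_d$, shows $\mathrm{vol}(I)=0$, and observes that every $\mathbf{x}\notin I$ lies outside $I_d$ for all $d$ large enough, at which point continuity of $f$ at $\mathbf{x}$ finishes the pointwise argument. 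Without this step the pointwise a.e.\ claim is unproved, and your subsequent appeal to dominated convergence (itself a valid and slightly simpler alternative to the paper's use of Egorov for the $\Leb^1$ statement) has nothing to apply to.
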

Furthermore, we have the following robust convergence result for univariate functions of bounded variation, from which Theorem \ref{th:BV1D} directly follows.
\begin{theorem}
Under Assumptions \ref{ass:polynomialBasisMuZero} and \ref{ass:perturbation},  let $\X$ and $\Y$ be intervals of the real line and assume that $V(f) < + \infty$. With the choice $\alpha \in [0,1)$ and $\regparam_d = 2^{3-\sqrt{d}}$ in Definition \ref{def:mainapproximantRobust}, for any $r >  2$ and for any $d > 1$, it holds
\begin{align*}
&\Vert f- f^\alpha_{\regparam_d,d}\Vert_{\Leb^1(\X)}\\
\leq\,& \mathrm{vol}(\X) \left( \frac{2\delta_0}{\sqrt{d} - 1} + d^{-\frac{1}{4}} \right) \\
&+ \mathrm{diam}(\Y) \left( \frac{1+\alpha}{1 - \alpha}\frac{8(m+m_0) (3r)^{2r} e^{\frac{ 4}{d}}}{ 4  e^{2r- 2 }d^{r- 2}} + \frac{4d^{\frac{1}{4}}V(f) \delta_0}{\sqrt{d} - 1 } \right).
\end{align*}
\label{th:BV1DRobust} 
\end{theorem}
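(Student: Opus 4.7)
The plan is to mirror the structure of the proof of Theorem \ref{th:convergenceRobust}(ii), replacing the global Lipschitz estimate by a partitioning argument tailored to one--dimensional BV functions. The common starting point is the robust pointwise control of the perturbed Christoffel--Darboux approximant $f^\alpha_{\beta_d,d}$ supplied by Assumption \ref{ass:perturbation} and the enclosure \eqref{eq:perturbation}: up to the multiplicative constant $(1+\alpha)/(1-\alpha)$, any bound proved for the nominal polynomial $q_{\mu+\beta_d\mu_0,d}$ transfers to $q^\alpha_d$. The tail term $\mathrm{diam}(\Y)\,\frac{1+\alpha}{1-\alpha}\,\frac{8(m+m_0)(3r)^{2r}e^{4/d}}{4 e^{2r-2}d^{r-2}}$ in the statement is simply the $p=2$ specialization of the bound already used in Theorem \ref{th:convergenceRobust}(ii), and needs no separate treatment.

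The first substantive step is to replace the Lipschitz hypothesis by the BV one via a Jordan--type partition. Writing $f=f_1-f_2$ with $f_1,f_2$ nondecreasing and $V(f_1)+V(f_2)=V(f)$, I would show that for every $\eta>0$ there exists a partition of $\X$ into $N_\eta\le\lceil V(f)/\eta\rceil$ subintervals on each of which the oscillation of $f$ is at most $\eta$. On each such subinterval, a localized version of Lemma \ref{lem:lemmaId} — using a bump-type test function in $y$ that is built only from the behaviour of $f$ in a window of size comparable to the subinterval — gives a pointwise estimate for $|f^\alpha_{\beta_d,d}(x)-f(x)|$ in which the global Lipschitz constant $L$ of Theorem \ref{th:convergenceRobust}(ii) is replaced by the local oscillation $\eta$.

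The second step is a bad--set argument around the partition endpoints. Introducing a margin of radius $\rho$ around each of the $N_\eta$ endpoints produces a bad set of Lebesgue measure at most $2\rho N_\eta\le 2\rho V(f)/\eta$, on which only the crude bound $|f^\alpha_{\beta_d,d}-f|\le\mathrm{diam}(\Y)$ is available. On the complement, the localized estimate from Step~1 combines with the same regularization bias $\delta_0/(\sqrt d-1)$ that appears in the Lipschitz case, yielding a pointwise bound of order $\eta+\delta_0/(\sqrt d-1)$. Integrating over $\X$ and splitting into good and bad parts leads to an inequality of the form
\[
\Vert f-f^\alpha_{\beta_d,d}\Vert_{\Leb^1(\X)}\le \mathrm{vol}(\X)\!\left(\tfrac{2\delta_0}{\sqrt d-1}+\eta\right)+\mathrm{diam}(\Y)\!\left(\tfrac{1+\alpha}{1-\alpha}\,\mathrm{(tail)}+\tfrac{2\rho V(f)}{\eta}\right).
\]
Choosing $\eta=d^{-1/4}$ and $\rho$ of the order $\delta_0/(\sqrt d-1)$ balances the two $d$--dependent contributions and reproduces exactly the terms $\mathrm{vol}(\X)\,d^{-1/4}$ and $\mathrm{diam}(\Y)\,\frac{4d^{1/4}V(f)\delta_0}{\sqrt d-1}$ of the advertised bound.

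The hard part is Step~1: upgrading the pointwise control of $f^\alpha_{\beta_d,d}-f$ so that it depends on the \emph{local} oscillation of $f$ rather than on a global Lipschitz modulus. The difficulty is that the argmin defining $f^\alpha_{\beta_d,d}$ is a highly nonlinear functional of $q^\alpha_d$, and the BV assumption provides no pointwise smoothness that can be plugged into the Lipschitz-type comparison argument directly. I expect this to be handled by a careful choice of the comparison polynomial in $y$: one uses the monotone components $f_1,f_2$ to sandwich $f$ on each subinterval by two constants that differ by at most $\eta$, and builds a localized SOS upper envelope for $q_{\mu+\beta_d\mu_0,d}$ at $(x,f(x))$ whose degradation in $x$ is controlled solely by this oscillation. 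Once this localized version of Lemma \ref{lem:lemmaId} is established, the rest of the argument is bookkeeping of the partition and the optimization in $\eta$ described above.
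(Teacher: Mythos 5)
Your proposal correctly reproduces the final bookkeeping (the bad-set measure of order $V(f)\cdot\tfrac{2\delta_0}{\sqrt d-1}/d^{-1/4}$, the choice $\eta=d^{-1/4}$, $\rho\sim\delta_0/(\sqrt d-1)$, and the $p=2$ specialization of the $I_d$ term), and your partition-counting argument is, up to constants, equivalent to the packing lemma the paper proves. However, you have misidentified the crux of the argument, and the step you single out as ``the hard part'' is not needed at all. You propose to build a ``localized SOS upper envelope'' for $q_{\mu+\beta_d\mu_0,d}$ on each subinterval of small oscillation, so that the pointwise estimate for $|f^\alpha_{\beta_d,d}(x)-f(x)|$ depends on the local oscillation of $f$. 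But the geometric support-estimation result (Theorem \ref{th:supportEstimation}(ii)) already gives a pointwise estimate that is completely smoothness-free: for every $x\notin I_d$ the point $(x,f^\alpha_{\beta_d,d}(x))$ lies in $S_d$ and hence within distance $\tfrac{\delta_0}{\sqrt d-1}$ of $\spt{\mu}=\overline{\{(\mathbf x,f(\mathbf x))\}}$, so there is some $h_d$ with $|x-h_d|\le\tfrac{2\delta_0}{\sqrt d-1}$ and $|f^\alpha_{\beta_d,d}(x)-f(h_d)|\le\tfrac{2\delta_0}{\sqrt d-1}$. No construction of bump polynomials, no localization of Lemma \ref{lem:lemmaId}, and no property of $f$ beyond boundedness of its graph is used at this stage.

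The only place where the regularity of $f$ enters is in going from $f(h_d)$ to $f(x)$, and that is where BV comes in. The paper does this by defining
$J_d=\{x\in\X:\exists u\in\X,\ |x-u|\le\tfrac{2\delta_0}{\sqrt d-1},\ |f(u)-f(x)|>d^{-1/4}\}$
and bounding $\mathrm{vol}(J_d)\le\tfrac{4\delta_0 d^{1/4}V(f)}{\sqrt d-1}$ with a simple greedy packing argument (Lemma \ref{lem:packing}): repeatedly pick a witness pair $(t_k,u_k)$ at distance at most $b$ with $|f(t_k)-f(u_k)|>a$, remove a $2b$-ball, and note that the pairs contribute disjointly at least $a$ each to the total variation. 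For $x\notin I_d\cup J_d$ one then has directly $|f^\alpha_{\beta_d,d}(x)-f(x)|\le\tfrac{2\delta_0}{\sqrt d-1}+d^{-1/4}$, and integrating gives the stated bound using Lemma \ref{lem:lemmaId} for $\mathrm{vol}(I_d)$. You should therefore drop the proposed localized SOS construction entirely: it is both vague and redundant, and the actual proof consists of (a) the existing geometric estimate applied verbatim, plus (b) a purely real-analytic packing lemma for BV functions that replaces the Lipschitz comparison. Your Jordan-decomposition/subinterval route to (b) would work, but the direct packing argument is cleaner and is what the paper does.
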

The next section is dedicated to the proof of these theorems.

\section{Proofs}

\label{sec_estimation}

This section is divided into several subsections. Subsection \ref{sec:estimspt} gives some quantitative results on the estimation of the support of $\mu$ which does not depend on the nature of $\mu$ and could be of independent interest. More precisely, we show that the regularized Christoffel-Darboux polynomial takes large values outside the support of $\mu$ and smaller values inside. This is expressed by describing properties of certain sublevel sets of the polynomial being close to the support of $\mu$. In subsection \ref{sec:estimfun} we translate these geometric results in functional terms. In subsection \ref{sec:proofs}, we prove our main results: the argument of the minimum of the regularized Christoffel-Darboux polynomial is close to the graph of the function $f$.

\subsection{Estimation of the support}\label{sec:estimspt}

In this section we build a polynomial sublevel set that will be instrumental for our proofs of convergence in functional terms. Note that in practice this sublevel set is not computed: we just focus on the argmin of the regularized Christoffel-Darboux polynomial. The contents of this section may be considered of independent interest.

For any $d \in \N$ and $r \in \N$ such that $r > p$  and for any $\alpha \in [0,1)$ , define	\begin{equation}
\label{eq:defGammad}
\gamma_d :=\frac{1-\alpha}{8(m+m_0)} \frac{e^{2r}d^r}{(3r)^{2r}}
\end{equation}
and
\begin{equation}
S_d:=\lbrace \genvar\in\mathbb{R}^p :  q^\alpha_d(\genvar) < \gamma_d\rbrace.
\label{eq:defLevelSet}
\end{equation}
We aim at proving that the sublevel set $S_d$ is approaching the support of $\mu$ as $d$ goes to infinity, with a given convergence rate. It is precisely quantified with the following result which is illustrated in Figure \ref{fig:illustrSetRecovery}.
\begin{theorem}
For $d>1$ it holds
\begin{itemize}
\item[(i)] 
\begin{align*}
\mu(\{\genvar \in\mathbb{R}^p : \genvar \not\in S_d\})\leq \frac{1+\alpha}{1 - \alpha}\frac{8(m+m_0)(3r)^{2r} e^{\frac{p^2}{d}}}{p^p e^{2r - p} d^{r-p}}.
\end{align*}
\item[(ii)] For any $\genvar \in S_d$, 
\begin{align*}
\mathrm{dist}(\genvar,\spt{\mu})\leq \frac{\delta_0}{\sqrt{d}-1}.
\end{align*}
\end{itemize}
\label{th:supportEstimation}
\end{theorem}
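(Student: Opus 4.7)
The plan is to prove (i) and (ii) separately, both following from the variational properties of the regularized Christoffel--Darboux polynomial combined with the perturbation bound \eqref{eq:perturbation}.

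For (i), I would combine Markov's inequality with a trace identity. The perturbation bound $q^\alpha_d \leq (1+\alpha)\, q_{\mu+\regparam_d\mu_0,d}$ gives
\[
\{q^\alpha_d \geq \gamma_d\} \subseteq \{q_{\mu+\regparam_d\mu_0,d} \geq \gamma_d/(1+\alpha)\},
\]
so it suffices to control the $\mu$-measure of the latter. Bounding $\mu \leq \mu+\regparam_d\mu_0$ and applying Markov's inequality reduces the task to evaluating $\int q_{\mu+\regparam_d\mu_0,d}\,d(\mu+\regparam_d\mu_0)$. Combining \eqref{eq:orthogonalDecomposition}, \eqref{eq:ortho} and \eqref{eq:momentMatrixCorrespondance}, the polynomials $p_i$ are orthogonal in $L^2(\mu+\regparam_d\mu_0)$ with squared norms $e_i + \regparam_d$; integrating \eqref{eq:regularizedCDpoly} term-by-term therefore yields the trace identity $\int q_{\mu+\regparam_d\mu_0,d}\,d(\mu+\regparam_d\mu_0) = n_d$. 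It remains to bound the dimension $n_d = \binom{p+d}{p} \leq (p+d)^p/p!$ via $(1+p/d)^p \leq e^{p^2/d}$ and the Stirling-type inequality $p! \geq (p/e)^p$; substituting $n_d \leq e^{p^2/d}\, d^p\, e^p/p^p$ together with the definition of $\gamma_d$ reproduces the stated form.

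For (ii), I would exhibit a suitable test polynomial in the extremal characterization
\[
q_{\mu+\regparam_d\mu_0,d}(\genvar) = \sup\left\{ \frac{P(\genvar)^2}{\int P^2\,d(\mu+\regparam_d\mu_0)} \;:\; P \in \R[\genvar], \; \deg P \leq d \right\},
\]
and lift the bound to $q^\alpha_d$ via $q^\alpha_d \geq (1-\alpha)\, q_{\mu+\regparam_d\mu_0,d}$. Setting $\rho := \mathrm{dist}(\genvar, \spt{\mu})$, I exploit that for any $\xi \in \spt{\mu}$, $\rho \leq \|\xi - \genvar\| \leq \rho + \delta_0$, so a quadratic $g(\xi) = a - b\|\xi - \genvar\|^2$ with $a := 1+s$ and $b := 2/(\delta_0(2\rho+\delta_0))$, where $s := 2\rho^2/(\delta_0(\delta_0+2\rho))$, maps $\spt{\mu}$ into $[-1,1]$ and sends $\genvar$ to $1+s$. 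Taking $P := T_{\lfloor d/2\rfloor}\circ g$, which has degree at most $d$, one has by construction $|P| \leq 1$ on $\spt{\mu}$ and, since $g$ is bounded above by $1+s$ everywhere while staying above $-(1+s)$ on $\spt{\mu+\mu_0}$, also $|P| \leq P(\genvar)$ on $\spt{\mu+\mu_0}$; consequently $\int P^2\,d(\mu+\regparam_d\mu_0) \leq m + \regparam_d m_0 P(\genvar)^2$.

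The conclusion then follows from the classical estimate $T_n(1+s) \geq \tfrac{1}{2}(1+\sqrt{2s})^n$: under the hypothesis $\rho > \delta_0/(\sqrt{d}-1)$, a direct computation forces $s \geq 2/(d-1)$, which makes $P(\genvar)$ grow like $e^{\Theta(\sqrt d)}$. Plugging into the ratio and using $\regparam_d = 2^{3-\sqrt d}$ to tame the $\regparam_d m_0 P(\genvar)^2$ tail produces $q_{\mu+\regparam_d\mu_0,d}(\genvar) \gtrsim 2^{\sqrt d}/(m+m_0)$, which exceeds $\gamma_d/(1-\alpha)$ for every $r > p$ once $d$ is sufficiently large. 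I expect the main technical obstacle to be the precise matching of constants: the form $e^{2r}d^r/(3r)^{2r} = (de^2/(9r^2))^r$ of $\gamma_d$ contains a free parameter $r$ which is optimized at $r \sim \sqrt d /3$, suggesting that one should lower-bound $P(\genvar)^2$ by the polynomial factor $(de^2/(9r^2))^r$ extracted from the Chebyshev series rather than by the full exponential, and this bookkeeping together with verification in the small-$d$ regime is where the proof will require the most care.
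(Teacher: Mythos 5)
Your approach is essentially the same as the paper's: part (i) via Markov's inequality and the trace identity $\int q_{\mu+\regparam_d\mu_0,d}\,d(\mu+\regparam_d\mu_0)=n_d$ (the paper integrates only against $\mu$, getting $\sum_i e_i/(e_i+\regparam_d)\leq n_d$, which yields the same bound), and part (ii) via the extremal characterization of the Christoffel function together with a ``needle'' test polynomial. Your explicit construction $P=T_{\lfloor d/2\rfloor}\circ g$ with $g$ a suitably scaled quadratic, and the argument that $|P|\leq 1$ on $\spt\mu$ and $|P|\leq P(\genvar)$ on $\spt{\mu+\mu_0}$, is precisely the content of the needle-polynomial Lemma~\ref{lemma-existence-needle}, which the paper invokes as a black box (it is proven this way in \cite{lasserre2017empirical}).

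The one genuine gap is in the conclusion of part (ii). The statement is claimed for all $d>1$, but you establish only that the needle lower bound ``exceeds $\gamma_d/(1-\alpha)$ for every $r>p$ once $d$ is sufficiently large'' and leave the ``small-$d$ regime'' to unspecified bookkeeping. The paper closes this gap with the elementary Lemma~\ref{lem:techLem2}, which shows $2^{\sqrt d-3}/(m+m_0)\geq \gamma_d/(1-\alpha)$ for \emph{every} $d\in\NN$, using the calculus fact (Lemma~\ref{lem:techLem1}) that $\min_{x>0}\{\log(2)x-2r\log x\}=2r(1-\log(2r/\log 2))\geq 2r(1-\log(3r))$ together with $\log(\gamma_d\cdot 8(m+m_0)/(1-\alpha))=2r(1-\log(3r)+\log\sqrt d)$. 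This is exactly why $\gamma_d$ is defined with the $(3r)^{2r}$ factor in the denominator. Relatedly, your remark about ``optimizing $r\sim\sqrt d/3$'' has the direction of the optimization reversed: $r$ stays fixed (you pick, say, $r=p+1/2$), and the calculus problem in Lemma~\ref{lem:techLem1} is minimized over $x=\sqrt d$, with minimizer $x^*=2r/\log 2\approx 3r$; this is a one-time verification that the needle bound dominates $\gamma_d/(1-\alpha)$ uniformly in $d$, not an optimization of $r$ as a function of $d$.

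You also need the precise constant $P(\genvar)^2\geq 2^{\sqrt d-3}$ (not merely $\gtrsim 2^{\sqrt d}$) for all $d>1$ when $\rho\geq\delta_0/(\sqrt d-1)$, as this is what makes $P(\genvar)^2/(m+\regparam_d m_0 P(\genvar)^2)\geq 2^{\sqrt d-3}/(m+m_0)$ with $\regparam_d=2^{3-\sqrt d}$. Your lower bound $T_n(1+s)\geq\tfrac12(1+\sqrt{2s})^n$ with $n=\lfloor d/2\rfloor$ and $s\geq 2/(d-1)$ does deliver this (one checks $(1+2/\sqrt{d-1})^{d-1}\geq 2^{\sqrt d-1}$ for $d\geq 2$), but the write-up should actually carry this out rather than defer it, since together with Lemma~\ref{lem:techLem2} this is the whole of the small-$d$ verification.
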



\begin{figure}
    \centering
    \includegraphics[width=.6\textwidth]{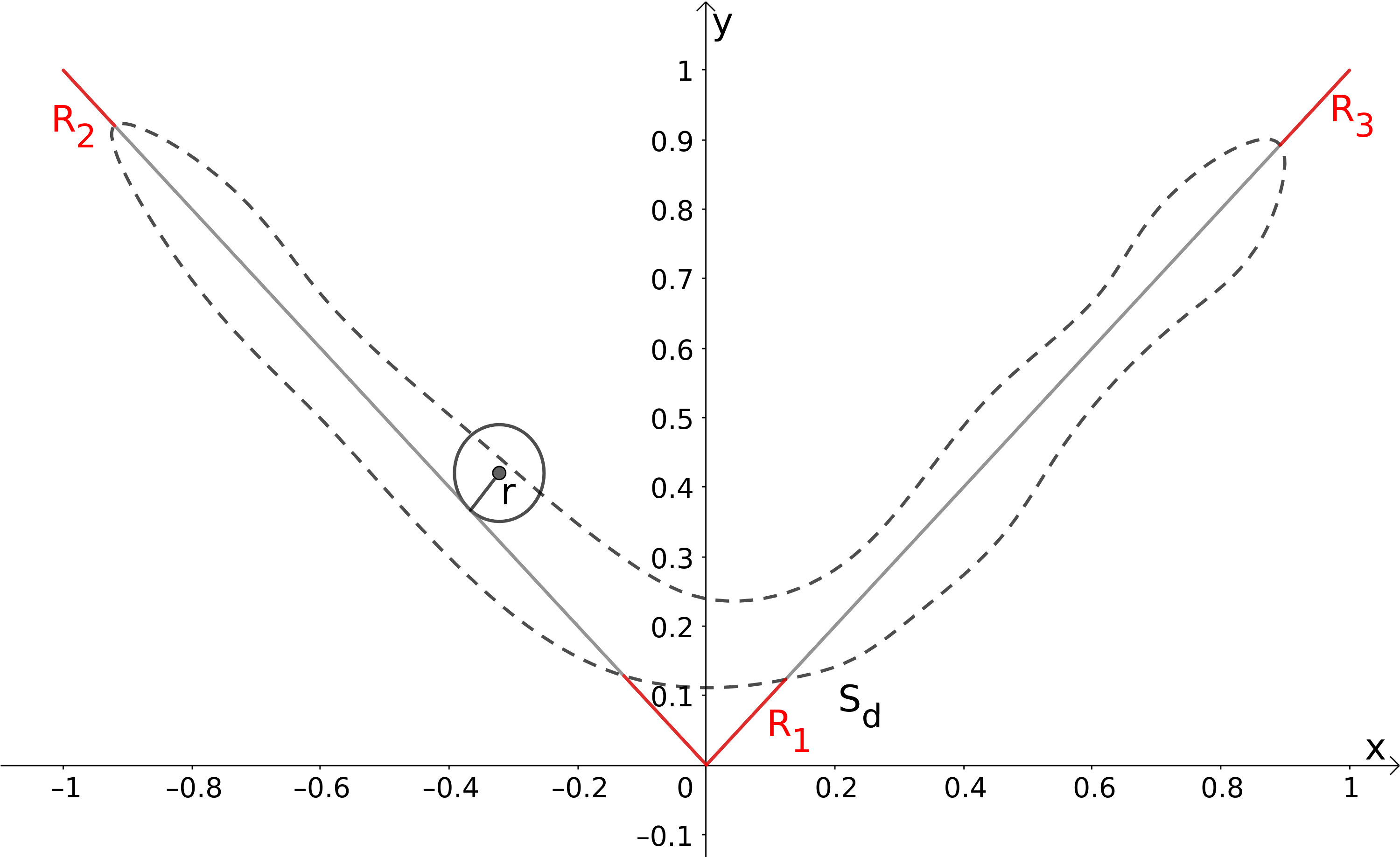}
    \caption{Illustration of the result of Theorem \ref{th:supportEstimation}, the dotted curve represents the boundary of the set $S_d$ and the considered function $f$ is the absolute value. The theorem states that (i) most of the points of the graph of $f$ will be in $S_d$ and that (ii) all points in $S_d$ will be close to the graph of $f$. More precisely, (i) states that the measure of $R_1\cup R_2\cup R_3$ will vanish and (ii) the distance $r$ of any point in $S_d$ to the graph of $f$ will go to zero for $d\to\infty$, respectively. }
    \label{fig:illustrSetRecovery}
\end{figure}

The results and techniques that we will use to prove this theorem are adapted from \cite{lasserre2017empirical} which considers the absolutely continuous setting without regularization.

\begin{proof}\textbf{ of Theorem \ref{th:supportEstimation} \textit{(i)}}
Using \eqref{eq:regularizedCDpoly} and \eqref{eq:ortho}, we obtain
\begin{align}
\int_{\mathbb{R}^p} q_{\mu + \regparam_d\mu_0,d}(\genvar)d\mu(\genvar) &= \sum_{i=1}^{n_d} \frac{e_i}{e_i+\regparam_d} \leq n_d \leq  d^p \left(\frac{e}{p}\right)^p e^{\frac{p^2}{d}}
\label{eq:propmarkov1}
\end{align}
where the last inequality is given in \cite[Lemma 6.5]{lasserre2017empirical}.
Using Markov's inequality \cite[Page 91]{stein2009real} and \eqref{eq:propmarkov1} yields
\begin{align}
\mu(\{\genvar\in\mathbb{R}^p : q_{\mu+\regparam_d \mu_0,d}(\genvar)\geq \frac{\gamma_d}{1+\alpha}\}) &\leq  \frac{\int_{\mathbb{R}^p} (1+\alpha)q_{\mu + \regparam_d\mu_0,d}(\genvar)d\mu(\genvar)}{\gamma_d}\nonumber\\
&\leq  (1 + \alpha) \frac{d^p \left(\frac{e}{p}\right)^p e^{\frac{p^2}{d}}}{\gamma_d}.
\label{eq:propmarkov2}
\end{align}
Now using \eqref{eq:perturbation} and \eqref{eq:defLevelSet}, we have the following implications 
\begin{align*}
\genvar \not \in S_d \quad\Leftrightarrow\quad q^\alpha_d(\genvar) \geq \gamma_d\quad \Rightarrow\quad q_{\mu+\regparam_d \mu_0,d}(\genvar) \geq \frac{\gamma_d}{1 + \alpha}.
\end{align*}
Hence $ \mu(\{\genvar \in\mathbb{R}^p : \genvar \not\in S_d\}) \leq \mu(\{\genvar\in\mathbb{R}^p : q_{\mu+\regparam_d \mu_0,d}(\genvar)\geq \frac{\gamma_d}{1+\alpha}\})$. Using the expression of $\gamma_d$ in \eqref{eq:defGammad} and the inequality \eqref{eq:propmarkov2}, one has:
$$
\mu(\{\genvar \in\mathbb{R}^p : \genvar \not\in S_d\})\leq \frac{1+\alpha}{1 - \alpha}\frac{8(m+m_0) (3r)^{2r} e^{\frac{p^2}{d}}}{p^p e^{2r - p}d^{r-p}}$$
which concludes the proof of item $(i)$ of Theorem \ref{th:supportEstimation}. 
\end{proof}
To carry out the proof of Theorem \ref{th:supportEstimation} \textit{(ii)}, we begin with a few lemmas. The following result is classical, see \textit{e.g.}  \cite[Remark 3.6.]{lasserre2017empirical} and \cite[Equation (1.1.)]{kroo2012christoffel}.
\begin{lemma}
Let $d \in \NN$, $\genvar \in \RR^p$, $\regparam > 0$, and $q$ be a polynomial of degree at most $d$. Then 
\begin{align*}
\frac{q^2(\genvar)}{ \int_{\RR^p} q^2(\mathbf{z}) d(\mu + \regparam\mu_0) (\mathbf{z})} \leq q_{\mu + \regparam \mu_0,d} (\genvar).
\end{align*}
\label{lem:lowerBoundCDPoly}
\end{lemma}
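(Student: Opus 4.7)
The plan is to recognize this inequality as the classical extremal (or reciprocal-of-Christoffel-function) property of the Christoffel-Darboux kernel, which in finite dimensions reduces to a one-line Cauchy-Schwarz argument on the moment matrix. I would first set $\nu := \mu + \regparam \mu_0$ and recall that, by Assumption \ref{ass:polynomialBasisMuZero} and the definition \eqref{eq:momentMatrixCorrespondance}, the moment matrix $\Mmat_{\nu,d}$ is positive definite (indeed, $\Mmat_{\nu,d} \succeq \regparam \Imat_{n_d} \succ 0$), so that its square root and its inverse are well-defined.

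Next I would write any polynomial $q$ of degree at most $d$ in the basis $\mathbf{b}$, namely $q(\mathbf{z}) = \mathbf{q}^\top \mathbf{b}(\mathbf{z})$ for some $\mathbf{q} \in \RR^{n_d}$, and translate the two quantities appearing in the inequality into linear algebra:
\[
q^2(\genvar) = \bigl(\mathbf{q}^\top \mathbf{b}(\genvar)\bigr)^2, \qquad \int_{\RR^p} q^2(\mathbf{z})\,d\nu(\mathbf{z}) = \mathbf{q}^\top \Mmat_{\nu,d}\, \mathbf{q},
\]
and, from \eqref{eq:CDpoly}, $q_{\nu,d}(\genvar) = \mathbf{b}(\genvar)^\top \Mmat_{\nu,d}^{-1} \mathbf{b}(\genvar)$. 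The claim thus becomes
\[
\bigl(\mathbf{q}^\top \mathbf{b}(\genvar)\bigr)^2 \leq \bigl(\mathbf{q}^\top \Mmat_{\nu,d}\, \mathbf{q}\bigr)\bigl(\mathbf{b}(\genvar)^\top \Mmat_{\nu,d}^{-1} \mathbf{b}(\genvar)\bigr).
\]

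The key step is then a Cauchy-Schwarz inequality in the Euclidean inner product, applied after factoring $\mathbf{q}^\top \mathbf{b}(\genvar) = \bigl(\Mmat_{\nu,d}^{1/2}\mathbf{q}\bigr)^\top \bigl(\Mmat_{\nu,d}^{-1/2} \mathbf{b}(\genvar)\bigr)$. This immediately yields the displayed inequality, and dividing by the (strictly positive) denominator gives the statement of the lemma. If $q \equiv 0$, the inequality is trivial since the left-hand side is zero and the right-hand side is nonnegative as a sum of squares over nonnegative eigenvalues.

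I do not foresee a genuine obstacle here: the positive definiteness of $\Mmat_{\nu,d}$ ensured by the regularization is what makes the Cauchy-Schwarz step rigorous (no pseudo-inverse issues), and once the quantities are written in matrix form the proof is essentially one line. If desired, one could alternatively invoke the well-known variational identity
\[
q_{\nu,d}(\genvar) = \sup\left\{ \frac{q^2(\genvar)}{\int q^2\,d\nu} : q \in \R[\genvar],\ \deg q \leq d,\ q \not\equiv 0 \right\},
\]
which follows from the same Cauchy-Schwarz argument together with the choice $\mathbf{q} = \Mmat_{\nu,d}^{-1} \mathbf{b}(\genvar)$ that achieves the supremum, so that the lemma is just the statement that an arbitrary admissible $q$ lower-bounds this supremum.
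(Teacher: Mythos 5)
Your proof is correct and complete. Note that the paper itself does not prove this lemma: it is stated as a classical fact with citations to Lasserre--Pauwels (Remark~3.6) and Kro\'o--Lubinsky (Equation~(1.1)). The argument you give — writing $q = \mathbf{q}^\top\mathbf{b}$, identifying $\int q^2\,d\nu = \mathbf{q}^\top\Mmat_{\nu,d}\mathbf{q}$ and $q_{\nu,d}(\genvar) = \mathbf{b}(\genvar)^\top\Mmat_{\nu,d}^{-1}\mathbf{b}(\genvar)$, and then applying Cauchy--Schwarz to the factorization $\mathbf{q}^\top\mathbf{b}(\genvar) = (\Mmat_{\nu,d}^{1/2}\mathbf{q})^\top(\Mmat_{\nu,d}^{-1/2}\mathbf{b}(\genvar))$ — is precisely the standard proof of the extremal property of the Christoffel function in the nondegenerate case, and the regularization $\nu = \mu + \regparam\mu_0$ with Assumption~\ref{ass:polynomialBasisMuZero} is exactly what guarantees $\Mmat_{\nu,d} \succeq \regparam\Imat_{n_d} \succ 0$ so that the square root and inverse exist. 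Your closing remark that the choice $\mathbf{q} = \Mmat_{\nu,d}^{-1}\mathbf{b}(\genvar)$ saturates the bound (giving the variational characterization as a supremum) is also correct, though not needed for the one-sided inequality claimed. In short, this is a valid self-contained justification of a result the paper leaves as a citation.
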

The following Lemma defines the needle polynomial, introduced first in \cite{kroo2012christoffel}, and gives a quantitative result crucial for our analysis.  
\begin{lemma}[Existence of a needle polynomial]
\label{lemma-existence-needle}
Let $B_\delta$ denote the euclidean ball of radius $\delta$. Then, for all $\delta\in (0,1)$ and $d\in\mathbb{N}$, $d>0$, there exists a polynomial $q$ of degree $2d$ such that
\begin{equation}
\begin{split}
&q(0)=1,\: q(\genvar) \in [-1,1]\text{ for all }\genvar\in B_1,\text{ and }\\
&|q(\genvar)|\leq 2^{1-\delta d}\text{ for all }\genvar\in B_1\setminus B_\delta.
\end{split} 
\end{equation}
\end{lemma}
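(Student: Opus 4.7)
The plan is to use the classical Chebyshev needle construction, lifted to the multivariate setting through a radial transformation. Recall that the Chebyshev polynomial $T_d$ of the first kind satisfies $|T_d(t)|\le 1$ on $[-1,1]$, $T_d(1)=1$, and admits the explicit growth estimate $T_d(x)\ge \tfrac{1}{2}\bigl(x+\sqrt{x^2-1}\bigr)^d$ for $x\ge 1$. The candidate needle polynomial will be
$$
q(\genvar) \;:=\; \frac{T_d\bigl(h(\|\genvar\|^2)\bigr)}{T_d\bigl(h(0)\bigr)}, \qquad h(u) \;:=\; 1 + \frac{2(\delta^2 - u)}{1-\delta^2}.
$$
Since $h$ is affine in $u=\|\genvar\|^2$, the composition is a polynomial in $\genvar$ of degree exactly $2d$, and the normalization forces $q(0)=1$.

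Next I would verify the two range conditions. A direct computation shows that $h$ maps $[\delta^2,1]$ bijectively (and decreasingly) onto $[-1,1]$, and maps $[0,\delta^2]$ into $[1,h(0)]$. So for $\|\genvar\|\in[\delta,1]$ we obtain $h(\|\genvar\|^2)\in[-1,1]$, hence $|T_d(h(\|\genvar\|^2))|\le 1$; and for $\|\genvar\|\in[0,\delta]$ we have $h(\|\genvar\|^2)\in[1,h(0)]$, where $T_d$ is increasing, giving $0\le T_d(h(\|\genvar\|^2))\le T_d(h(0))$. Dividing by $T_d(h(0))$ yields $|q(\genvar)|\le 1$ throughout $B_1$.

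The crux is then the quantitative needle estimate on $B_1 \setminus B_\delta$, where it only remains to lower bound $T_d(h(0))$. A short algebraic computation gives $h(0)=\frac{1+\delta^2}{1-\delta^2}$, $\sqrt{h(0)^2-1}=\frac{2\delta}{1-\delta^2}$, and therefore the clean identity $h(0)+\sqrt{h(0)^2-1}=\frac{1+\delta}{1-\delta}$. Combined with the Chebyshev growth bound, this produces
$$
T_d(h(0)) \;\ge\; \tfrac{1}{2}\left(\frac{1+\delta}{1-\delta}\right)^{\!d}.
$$
To conclude that $|q(\genvar)|\le 2^{1-\delta d}$ on the annular region, I would compare $(\tfrac{1+\delta}{1-\delta})^d$ with $2^{\delta d}$, which amounts to $\ln\frac{1+\delta}{1-\delta}\ge \delta\ln 2$. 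The standard series $\ln\frac{1+\delta}{1-\delta}=2\sum_{k\ge 0}\tfrac{\delta^{2k+1}}{2k+1}\ge 2\delta$ makes this inequality immediate since $2>\ln 2$.

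No individual step is genuinely hard; the only delicate point is the algebraic choice of the affine map $h$, tailored so that $h(0)+\sqrt{h(0)^2-1}$ simplifies to $(1+\delta)/(1-\delta)$, which is what forces the correct exponential decay rate $2^{-\delta d}$ up to the explicit constant. The construction is the natural radial analogue of the univariate Chebyshev needle used in \cite{kroo2012christoffel}, and the reduction to the univariate estimate is what makes the proof go through uniformly in $\genvar$.
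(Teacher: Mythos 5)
Your construction is correct and is essentially the same Chebyshev-needle argument that the paper relies on, deferring to \cite[Lemma~6.3]{lasserre2017empirical} (itself following \cite{kroo2012christoffel}): compose a scaled Chebyshev polynomial with the affine image of $\|\genvar\|^2$, normalize at the origin, and use $T_d(x)\ge\tfrac12(x+\sqrt{x^2-1})^d$ together with the observation $h(0)+\sqrt{h(0)^2-1}=\tfrac{1+\delta}{1-\delta}\ge 2^{\delta}$ to get the exponential decay $2^{1-\delta d}$. All the algebraic steps check out, so nothing further is needed.
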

A detailed proof is provided in \cite[Lemma 6.3]{lasserre2017empirical}. Thanks to the latter lemma, we can characterize the behavior of the regularized Christoffel-Darboux polynomial $q_{\mu + \regparam_d \mu_0,d}$ outside the support of $\mu$:
\begin{lemma}
\label{lem:lowerBoundOutsideSupport}
Let $d \in \NN $, $d > 1$ and $\genvar \in \RR^p$.  Recall that $\beta_d:=2^{3-\sqrt{d}}$.  Assume that $\mathrm{dist}(\genvar,\spt{\mu}) \geq \frac{\delta_0}{\sqrt{d}-1}$. Then
\begin{align}
\frac{ 2^{\sqrt{d}-3}}{m+m_0}\leq q_{\mu + \regparam_d \mu_0,d}(\genvar).
\label{eq:ineq-needle}
\end{align}
\end{lemma}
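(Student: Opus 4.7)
The plan is to apply Lemma~\ref{lem:lowerBoundCDPoly}: to lower bound $q_{\mu+\regparam_d\mu_0,d}(\genvar)$, it suffices to exhibit \emph{one} polynomial $q$ of degree at most $d$ that takes a large value at $\genvar$ while having small $L^2$-norm with respect to $\mu+\regparam_d\mu_0$. The natural candidate is a needle polynomial from Lemma~\ref{lemma-existence-needle}, suitably translated and rescaled so that $\genvar$ sits at its peak and $\spt{\mu}$ sits in the region where the needle is exponentially small.

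Concretely, set $\rho := \mathrm{dist}(\genvar,\spt{\mu})$ (so $\rho \geq \delta_0/(\sqrt{d}-1)$ by hypothesis) and $R := \rho + \delta_0$. Since $\mathrm{diam}(\spt{\mu+\mu_0}) = \delta_0$ and $\spt{\mu} \subseteq \spt{\mu+\mu_0}$, the triangle inequality gives $\spt{\mu+\mu_0} \subseteq B(\genvar,R)$, while $\spt{\mu}$ lies outside $B(\genvar,\rho)$. Pick $d' := \lfloor d/2 \rfloor$ and $\delta := \rho/R \in (0,1)$; apply Lemma~\ref{lemma-existence-needle} and compose with the affine map $\mathbf{w} \mapsto (\mathbf{w}-\genvar)/R$ to obtain a polynomial $q$ of degree $2d' \leq d$ satisfying $q(\genvar)=1$, $|q|\leq 1$ on $\spt{\mu+\mu_0}$, and $|q|\leq 2^{1-\delta d'}$ on $\spt{\mu}$.

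Splitting the integral in Lemma~\ref{lem:lowerBoundCDPoly} along $\mu$ and $\regparam_d\mu_0$ then gives
\[
\int q^2 \,d(\mu+\regparam_d\mu_0) \;\leq\; 2^{2-2\delta d'}\,m + \regparam_d\, m_0.
\]
The goal reduces to checking $2^{2-2\delta d'}\,m + \regparam_d\,m_0 \leq 2^{3-\sqrt{d}}(m+m_0)$, and since $\regparam_d = 2^{3-\sqrt{d}}$, this in turn reduces to the single inequality $2\delta d' \geq \sqrt{d}-1$. The hypothesis $\rho \geq \delta_0/(\sqrt{d}-1)$ yields $1/\delta = 1 + \delta_0/\rho \leq \sqrt{d}$, hence $\delta \geq 1/\sqrt{d}$; combined with $d' \geq (d-1)/2$, one gets $2\delta d' \geq (d-1)/\sqrt{d} = \sqrt{d}-1/\sqrt{d} \geq \sqrt{d}-1$, as required. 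Substituting back into Lemma~\ref{lem:lowerBoundCDPoly} gives the desired bound $q_{\mu+\regparam_d\mu_0,d}(\genvar) \geq 2^{\sqrt{d}-3}/(m+m_0)$.

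The only subtle points are the geometric set-up (making sure the scaled support is inside $B_1$ while the support of $\mu$ is outside $B_\delta$, so that the needle polynomial's two regimes apply to the right pieces of $\mu+\regparam_d\mu_0$) and the accounting in the final inequality, where the careful calibration $\regparam_d = 2^{3-\sqrt{d}}$ is exactly what absorbs the $\mu_0$-contribution and matches the target constant. I expect no real obstacle beyond keeping the bookkeeping tight: the choice of $d'=\lfloor d/2\rfloor$ loses a factor which is absorbed by $\sqrt{d}-1/\sqrt{d} \geq \sqrt{d}-1$, which is why the statement is restricted to $d>1$.
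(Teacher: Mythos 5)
Your proof is correct and follows essentially the same strategy as the paper's: build a needle polynomial (Lemma~\ref{lemma-existence-needle}), compose with an affine map centering at $\genvar$ and scaling by $\rho+\delta_0$, split the resulting integral over $\mu$ and $\regparam_d\mu_0$ in Lemma~\ref{lem:lowerBoundCDPoly}, and use the calibration $\regparam_d = 2^{3-\sqrt{d}}$ to absorb the $\mu_0$ contribution. The one minor difference is cosmetic: the paper writes separate bounds for $q_{\mu+\regparam\mu_0,2d'}$ and $q_{\mu+\regparam\mu_0,2d'+1}$ and selects one by parity, whereas you simply observe $2\lfloor d/2\rfloor \leq d$ and invoke Lemma~\ref{lem:lowerBoundCDPoly} once at degree $d$, with the small loss absorbed by the slack in $\sqrt{d}-1/\sqrt{d}\ge\sqrt{d}-1$.
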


\begin{proof}\textbf{ of Lemma \ref{lem:lowerBoundOutsideSupport}:}
Let $d > 1$, $\genvar \in \RR^p$, $\delta = \mathrm{dist}(\genvar,\spt{\mu})$. Let $d' \in \NN$ and $t >0$, arbitrary for the moment. Consider the affine map $T:\mathbf{w}\mapsto \frac{\mathbf{w}-\genvar}{\delta + \delta_0}$. Let $q$ be the degree $2d'$ polynomial given as in Lemma \ref{lemma-existence-needle} such that
\begin{align}
 q(0)=1,\: q(\mathbf{w}) \in [-1,1]\text{ for all }\mathbf{w}\in B_1,\text{ and } \nonumber\\
 |q(\mathbf{w})|\leq 2^{1-\frac{\delta d'}{\delta + \delta_0}}\text{ for all }\mathbf{w}\in B_1\setminus B_{\frac{\delta }{\delta+ \delta_0}}.
\label{eq:lemma-needle1}
\end{align}
Let $r = q \circ T$. The polynomial $r$ satisfies
\begin{align}
 |r(\mathbf{z}')|  &\leq 1,\qquad \forall  \mathbf{z}' \in \spt{\mu + \mu_0},\nonumber\\
 r(\mathbf{z}') &\leq 2^{1-\frac{\delta d'}{\delta + \delta_0}},\qquad \forall  \mathbf{z}' \in \spt{\mu},\nonumber\\
r(\genvar) &= 1.
\label{eq:lemma-needle2}
\end{align}
Using Lemma \ref{lem:lowerBoundCDPoly} and the fact that $r(\mathbf{z})=1$ we obtain
\begin{align}
\left(  \int_{\RR^p} r^2(\mathbf{w}) d(\mu + \regparam\mu_0) (\mathbf{w}) \right)^{-1} \leq q_{\mu + \regparam\mu_0,2d'}(\genvar)  
\label{eq:lemma-needle3}
\end{align}

and
\begin{align}
\left(  \int_{\RR^p} r^2(\mathbf{w}) d(\mu + \regparam\mu_0) (\mathbf{w}) \right)^{-1} \leq q_{\mu + \regparam\mu_0,2d'+1}(\genvar)  
\label{eq:lemma-needle31}
\end{align}

From \eqref{eq:lemma-needle2}, we deduce
\begin{align}
\int_{\RR^p} r^2(\mathbf{w}) d(\mu+\regparam\mu_0) (\mathbf{w}) &\leq 2^{2-\frac{\delta(2d')}{\delta + \delta_0}} m + \regparam m_0
\label{eq:lemma-needle4}
\end{align} 
 
\begin{align}
\int_{\RR^p} r^2(\mathbf{w}) d(\mu+\regparam\mu_0) (\mathbf{w})  \leq 2^{3-\frac{\delta(2d'+1)}{\delta + \delta_0}}m + \regparam m_0
\label{eq:lemma-needle41}
\end{align}

Combining \eqref{eq:lemma-needle3}, \eqref{eq:lemma-needle31} , \eqref{eq:lemma-needle4}  and \eqref{eq:lemma-needle41}, we obtain the following bounds
\begin{align}
q_{\mu + \regparam \mu_0,2d'}(\genvar)&\geq \left(2^{3-\frac{\delta(2d')}{\delta + \delta_0}} m + \regparam m_0  \right)^{-1}, \nonumber \\
q_{\mu + \regparam \mu_0,2d'+1}(\genvar)&\geq \left(2^{3-\frac{\delta(2d'+1)}{\delta + \delta_0}} m + \regparam m_0  \right)^{-1}.
\label{eq:lemma-needle5}
\end{align}
Recall that $d'$ and $ \beta$ were arbitrary. Now we can choose $d' = \lfloor d/2\rfloor$, $ \beta = \regparam_d$ in one of the identities in \eqref{eq:lemma-needle5} (depending on the parity of $d$) to obtain
\begin{align}
q_{\mu + \regparam_d \mu_0,d}(\genvar)&\geq \left(2^{3-\frac{\delta d}{\delta + \delta_0}} m + \regparam_d m_0  \right)^{-1} \geq \frac{2^{\sqrt{d}-3}}{m+m_0},
\label{eq:lemma-needle6}
\end{align}
where the last inequality follows because the right hand side is strictly increasing as a function of $\delta$ and $\delta \geq \frac{\delta_0}{\sqrt{d} - 1}$. This proves the desired result.
\end{proof}
Let us give two additional simple technical lemmas.
\begin{lemma}
\label{lem:simple}
				For any $r > 0$,
				\begin{align*}
								\min_{x > 0} \:\{\log(2) x - (2r) \log(x)\} = \quad&(2r)\left( 1 - \log\left( \frac{2r}{\log(2)} \right) \right) \\
								\geq\quad &  (2r)\left( 1 - \log(3r) \right). 
				\end{align*}
				\label{lem:techLem1}
\end{lemma}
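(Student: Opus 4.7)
The plan is to treat this as a straightforward calculus exercise: compute the unconstrained minimum of a one-variable convex function and then use a simple numerical inequality to pass from the exact minimum value to the stated lower bound.

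For the equality, I would first set $\varphi(x) := \log(2)\,x - (2r)\log(x)$ on $(0,\infty)$. Since $\varphi''(x) = 2r/x^2 > 0$, the function is strictly convex, so any critical point is the unique global minimizer. Solving $\varphi'(x) = \log(2) - 2r/x = 0$ gives $x^\star = 2r/\log(2)$. Substituting back,
\[
\varphi(x^\star) = \log(2)\cdot\frac{2r}{\log(2)} - (2r)\log\!\left(\frac{2r}{\log(2)}\right) = (2r)\left(1 - \log\!\left(\frac{2r}{\log(2)}\right)\right),
\]
which is the asserted closed form.

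For the inequality, I would reduce it to a numerical comparison. The claim $(2r)(1 - \log(2r/\log(2))) \geq (2r)(1 - \log(3r))$ is equivalent, after dividing by $2r > 0$ and using monotonicity of $-\log$, to $2r/\log(2) \leq 3r$, i.e.\ $2/\log(2) \leq 3$. Since $\log(2) > 2/3$ (equivalently $e^{2/3} < 2$, as $e^{2/3} \approx 1.948$), this holds, and the lemma follows.

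The main obstacle is essentially nonexistent here: it is a short computation plus one numerical bound. The only thing to double-check is that $2/\log(2) \leq 3$, which I would justify cleanly by noting $e^{2/3} < 2$ (equivalently $2/3 < \log 2$) so that no appeal to decimal approximations is needed in the written proof.
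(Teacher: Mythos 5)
Your proof is correct and follows essentially the same approach as the paper: find the critical point $x^\star = 2r/\log(2)$ of the convex function, substitute to obtain the closed form, and reduce the inequality to the numerical fact $2/\log(2) \leq 3$. You merely spell out the convexity check and the justification $e^{2/3} < 2$, which the paper leaves implicit.
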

\begin{proof}\textbf{ of Lemma \ref{lem:simple}:}
				A simple analysis shows that the minimum is attained at $x = \frac{2r}{\log(2)}$. The lower bound follows because $\frac{2}{\log(2)} \leq 3$. 
\end{proof}
\begin{lemma}
\label{lem:simple2}
For any $d \in \NN$, we have
\begin{align*}
\frac{ 2^{\sqrt{d}-3}}{m+m_0} \geq \frac{\gamma_d}{1 - \alpha}.	
\end{align*}
\label{lem:techLem2}
\end{lemma}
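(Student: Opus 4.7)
The plan is to chase the definitions and reduce the claimed inequality to a statement that is essentially the content of Lemma \ref{lem:simple}. Starting from the definition
\[
\gamma_d = \frac{1-\alpha}{8(m+m_0)} \frac{e^{2r} d^r}{(3r)^{2r}},
\]
I would first cancel the factor $1-\alpha$ and the factor $m+m_0$ from both sides, so that the inequality becomes the purely analytic statement
\[
8 \cdot 2^{\sqrt{d}-3} \;\geq\; \frac{e^{2r} d^r}{(3r)^{2r}},
\]
and then observe that $8 \cdot 2^{\sqrt{d}-3} = 2^{\sqrt{d}}$, reducing the goal to
\[
2^{\sqrt{d}} \;\geq\; \frac{e^{2r} d^r}{(3r)^{2r}}.
\]

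Next I would pass to logarithms. Taking $\log$ on both sides, the inequality to prove becomes
\[
\log(2)\,\sqrt{d} \;\geq\; 2r - 2r \log(3r) + r \log(d),
\]
or equivalently, after rewriting $r \log(d) = 2r \log(\sqrt{d})$,
\[
\log(2)\,\sqrt{d} - 2r \log(\sqrt{d}) \;\geq\; 2r\bigl(1 - \log(3r)\bigr).
\]

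At this point I would apply Lemma \ref{lem:simple} with the specific choice $x = \sqrt{d}$: that lemma states precisely that the left-hand side of the above inequality, as a function of a positive variable $x$, is bounded below by $2r(1-\log(3r))$ at its minimum, hence at every positive value of $x$, and in particular at $x=\sqrt{d}$. This closes the chain of implications and yields the claimed bound. The proof is essentially a bookkeeping exercise; the only subtle point is the clean cancellation $8\cdot 2^{-3}=1$ that makes the bound on $\gamma_d$ match the threshold $2^{\sqrt{d}}$ from Lemma \ref{lem:lowerBoundOutsideSupport}, so there is no real obstacle beyond carefully tracking constants.
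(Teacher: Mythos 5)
Your proof is correct and follows the same route as the paper: reduce the inequality to $2^{\sqrt d} \ge e^{2r} d^r / (3r)^{2r}$ by cancelling the shared constants, pass to logarithms, and invoke Lemma \ref{lem:simple} at $x = \sqrt d$. The paper's version is phrased slightly more compactly (it directly computes $\log\bigl(\gamma_d \cdot 8(m+m_0)/(1-\alpha)\bigr) = 2r(1 - \log(3r) + \log\sqrt d)$ and applies Lemma \ref{lem:simple}), but the argument is identical.
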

\begin{proof}\textbf{ of Lemma \ref{lem:simple2}:}
Using the definition of $\gamma_d$ in \eqref{eq:defGammad}, we have
\begin{align}
\log\left(\gamma_d\frac{8(m+m_0)}{1-\alpha}  \right) &=  2r ( 1 - \log(3r) +  \log(\sqrt{d})) \nonumber \\
&\leq \log(2) \sqrt{d},
\label{eq:techLem21}
\end{align}
where the inequality follows from Lemma \ref{lem:techLem1} with $x = \sqrt{d}$. This proves the desired result.
\end{proof}

\begin{proof}\textbf{ of Theorem \ref{th:supportEstimation} \textit{(ii)}:}
We prove the result by contraposition. We have the following chain of implications for $\genvar \in \RR^p$,
\begin{align}
&\mathrm{dist}(\genvar,\spt{\mu})\geq \frac{\delta_0}{\sqrt{d}-1} \nonumber\\
\Rightarrow \quad & \frac{ 2^{\sqrt{d}-3}}{m+m_0}\leq q_{\mu + \regparam_d \mu_0,d}(\genvar)\nonumber \\
\Rightarrow \quad & \gamma_d/ (1 - \alpha) \leq q_{\mu + \regparam_d \mu_0,d}(\genvar)\nonumber \\
\Rightarrow \quad & \gamma_d \leq q^\alpha_d(\genvar)\nonumber\\
\Rightarrow \quad & \genvar \not \in S_d,
\label{eq:propSuffMoment}
\end{align}
where the first implication is from Lemma \ref{lem:lowerBoundOutsideSupport}, the second implication is due to Lemma \ref{lem:techLem2}, the third implication is deduced from \eqref{eq:perturbation} and the last implication is from the definition of $S_d$ in \eqref{eq:defLevelSet}.
\end{proof}

\subsection{Estimation of functions}\label{sec:estimfun}

We now translate Theorem \ref{th:supportEstimation} in functional terms. Considering that $\mathbf{z}$ can be written as follows $\mathbf{z}=(\mathbf{x},y)$, let us introduce a specific set which will be of interest throughout the proof:
\begin{equation}
\label{eq:defId}
I_d:=\{ \xx\in \X : \inf_{y\in\Y}\,q^\alpha_d(\xx,y)\geq \gamma_d\}.
\end{equation}
\begin{lemma}
 Suppose that $d>1$.  Then, we have
\begin{equation}
\int_{I_d}d\mathbf{x} \leq 
\frac{1+\alpha}{1 - \alpha}\frac{8(m+m_0) (3r)^{2r} e^{\frac{p^2}{d}}}{p^p e^{2r-p} d^{r-p}}.
\end{equation}
\label{lem:lemmaId}
\end{lemma}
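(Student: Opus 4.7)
The plan is to observe that $I_d$ is essentially the projection onto $X$ of the portion of the graph of $f$ that misses the sublevel set $S_d$, and then to apply Theorem~\ref{th:supportEstimation}(i) which bounds the $\mu$-mass of the complement of $S_d$.

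More concretely, I would first recall the structure of the measure $\mu$ in \eqref{measure}, namely $d\mu(\mathbf{x},y)=\mathbb{I}_X(\mathbf{x})\,d\mathbf{x}\,\delta_{f(\mathbf{x})}(dy)$. With this Young measure form, for any measurable set $A\subset\mathbb{R}^p$ one has the key disintegration identity
\[
\mu(A)=\int_X \mathbb{I}_A\big(\mathbf{x},f(\mathbf{x})\big)\,d\mathbf{x}.
\]
Applied to $A=\{\mathbf{z}\in\mathbb{R}^p:\mathbf{z}\notin S_d\}$ this gives
\[
\mu\big(\{\mathbf{z}\notin S_d\}\big)=\int_X \mathbb{I}_{\{(\mathbf{x},f(\mathbf{x}))\notin S_d\}}\,d\mathbf{x}.
\]

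Next I would argue the set-theoretic inclusion $I_d\subseteq\{\mathbf{x}\in X:(\mathbf{x},f(\mathbf{x}))\notin S_d\}$. This is immediate from the definitions: if $\mathbf{x}\in I_d$ then $q^\alpha_d(\mathbf{x},y)\ge\gamma_d$ for every $y\in Y$; since $f(\mathbf{x})\in Y$ by the assumption $f:X\to Y$, we obtain $q^\alpha_d(\mathbf{x},f(\mathbf{x}))\ge\gamma_d$, which by \eqref{eq:defLevelSet} means exactly that $(\mathbf{x},f(\mathbf{x}))\notin S_d$. Integrating the corresponding indicator inequality yields
\[
\int_{I_d}d\mathbf{x}\le \int_X \mathbb{I}_{\{(\mathbf{x},f(\mathbf{x}))\notin S_d\}}\,d\mathbf{x}=\mu\big(\{\mathbf{z}\notin S_d\}\big).
\]

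Finally, invoking Theorem~\ref{th:supportEstimation}(i) gives
\[
\int_{I_d}d\mathbf{x}\le \frac{1+\alpha}{1-\alpha}\,\frac{8(m+m_0)(3r)^{2r}e^{p^2/d}}{p^p\,e^{2r-p}\,d^{r-p}},
\]
which is the claimed bound. There is really no obstacle here beyond making the inclusion $I_d\subseteq\{\mathbf{x}:(\mathbf{x},f(\mathbf{x}))\notin S_d\}$ explicit: the graph of $f$ sits inside $\operatorname{supp}(\mu)$, so the only way the whole vertical slice $\{\mathbf{x}\}\times Y$ can avoid $S_d$ is for the single graph point $(\mathbf{x},f(\mathbf{x}))$ to avoid $S_d$, and Theorem~\ref{th:supportEstimation}(i) already controls the Lebesgue mass of such bad $\mathbf{x}$'s.
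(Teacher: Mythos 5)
Your proposal is correct and follows essentially the same route as the paper's proof: disintegrate $\mu$ via the Young measure structure to get $\mu(A)=\int_X\mathbb{I}_A(\mathbf{x},f(\mathbf{x}))\,d\mathbf{x}$, observe the inclusion $I_d\subseteq\{\mathbf{x}:(\mathbf{x},f(\mathbf{x}))\notin S_d\}$ (which uses precisely that $f(\mathbf{x})\in Y$, as you emphasize), and then apply Theorem~\ref{th:supportEstimation}(i). No gaps.
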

\begin{proof}\textbf{ of Lemma \ref{lem:lemmaId}:}
For all $\xx\in\X$ and all $A\subset \mathbb{R}^{p}$ measurable, one has
\begin{equation}
\int_{\Y}\mathbb{I}_A(\xx,y)\delta_{f(\xx)}(dy)=\mathbb{I}_A(\xx,f(\xx))
\end{equation}
and hence
\begin{equation}
\begin{split}
\mu(A) = \int_{\Y} \mathbb{I}_A(\xx,y) d\mu(\xx,y) = &\int_{\X} \mathbb{I}_A(\xx,f(\xx))d\xx =  \int_{I_A} d\xx,
\end{split}
\end{equation}
where $I_A:=\lbrace \xx\in \X : (\xx,f(\xx))\in A\rbrace$. One can see from \eqref{eq:defId} that $\xx \in I_d$ implies that $(\xx,f(\xx)) \not \in S_d$ and hence $I_d \subset I_{S_d^c}$ where $S_d^c$ denotes the complement of $S_d$ given in \eqref{eq:defLevelSet}. We deduce that
\begin{equation}
\int_{I_d} d\xx \leq \int_{I_{S_d^c}} d\xx = \mu(S_d^c)
\end{equation}
and the result follows from Item (i) of Theorem \ref{th:supportEstimation}.
\end{proof}
\begin{remark}
Letting $\tilde{I}_d := \{ \xx \in \X : \inf_{y\in\Y}\,q^\alpha_d(\xx,y)\geq \gamma_d / 2\}$ it can be seen using the exact same arguments that a bound on $\int_{\tilde{I}_d}d\xx$ holds similarly as in Lemma \ref{lem:lemmaId} with a multiplicative factor of $2$. This can be used to handle the situation where the argmin in \eqref{eq:mainapproximant} is computed up to a precision of the order $\gamma_d/2$. See also Remark \ref{rem:robustPrecisionArgmin}.
            \label{rem:robustPrecisionArgmin2}
\end{remark}
Thanks to Lemma \ref{lem:lemmaId}, it is sufficient to prove the convergence of the approximated function in the set $I_d^c:=\X\setminus I_d$.
\begin{proposition}
\label{prop-almost}
Let $d\in\N$, $d > 1$. Let $\alpha \in [0,1)$ be as in Assumption \ref{ass:perturbation} with $\regparam_d = 2^{3-\sqrt{d}}$.
Consider $ f^\alpha_{\regparam_d,d}$ as in \eqref{eq:mainapproximantRobust} and let $I_d$ be defined by \eqref{eq:defId}. Then for any $r > p$, we have
\begin{align*}
\int_{\X} |f(\xx)- f^\alpha_{\regparam_d,d}(\xx)|d\xx \leq  &\int_{I_d^c} |f(\xx)- f^\alpha_{\regparam_d,d}(\xx)| d\xx \\
&+  \mathrm{diam}(\Y)\frac{1+\alpha}{1 - \alpha} \frac{8(m+m_0) (3r)^{2r} e^{\frac{p^2}{d}}}{p^p e^{2r-p} d^{r-p}}.
\end{align*}
\end{proposition}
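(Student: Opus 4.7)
The proof should be essentially a direct decomposition argument combining the definition of $I_d$ with Lemma \ref{lem:lemmaId}. My plan is to split the domain of integration $\X$ into the two disjoint measurable pieces $I_d$ and $I_d^c = \X\setminus I_d$, giving the identity
\[
\int_{\X} |f(\xx)-f^\alpha_{\regparam_d,d}(\xx)|\,d\xx = \int_{I_d^c} |f(\xx)-f^\alpha_{\regparam_d,d}(\xx)|\,d\xx + \int_{I_d} |f(\xx)-f^\alpha_{\regparam_d,d}(\xx)|\,d\xx.
\]
The first term on the right-hand side already appears in the statement we want to prove, so the entire task reduces to producing the stated upper bound on the second term.

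To bound the integral over $I_d$, I would use the trivial pointwise bound $|f(\xx)-f^\alpha_{\regparam_d,d}(\xx)| \leq \mathrm{diam}(\Y)$, which holds because both $f(\xx)$ and $f^\alpha_{\regparam_d,d}(\xx)$ lie in $\Y$ by construction: the former by assumption on the range of $f$, and the latter by Definition \ref{def:mainapproximantRobust} (the argmin is taken over $y \in \Y$, so $f^\alpha_{\regparam_d,d}(\xx) \in \Y$). This yields
\[
\int_{I_d} |f(\xx)-f^\alpha_{\regparam_d,d}(\xx)|\,d\xx \leq \mathrm{diam}(\Y)\int_{I_d} d\xx.
\]
Applying Lemma \ref{lem:lemmaId} to bound $\int_{I_d} d\xx$ by $\tfrac{1+\alpha}{1-\alpha}\tfrac{8(m+m_0)(3r)^{2r} e^{p^2/d}}{p^p e^{2r-p} d^{r-p}}$ then gives exactly the claimed tail term.

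There is essentially no obstacle here: the proposition is a bookkeeping step that isolates the \emph{good} region $I_d^c$, where the approximant is forced to be close to the graph of $f$ because the regularized Christoffel-Darboux polynomial has a small value along the argmin, from the \emph{bad} region $I_d$, whose Lebesgue measure has already been controlled in Lemma \ref{lem:lemmaId}. The nontrivial work of controlling $|f - f^\alpha_{\regparam_d,d}|$ pointwise on $I_d^c$ is deferred to subsequent results (where one uses Theorem \ref{th:supportEstimation}(ii) to argue that points achieving the minimum of $q^\alpha_d(\xx,\cdot)$ below $\gamma_d$ must be close to some point of $\mathrm{supp}(\mu)$, and hence close to $(\xx,f(\xx))$ under appropriate regularity of $f$), so this proposition just sets up the splitting on which those later estimates will act.
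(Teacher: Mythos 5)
Your proof is correct and is essentially the same as the paper's: both split the integral over $\X$ into the pieces $I_d^c$ and $I_d$, bound the integrand on $I_d$ by $\mathrm{diam}(\Y)$ using that both $f$ and $f^\alpha_{\regparam_d,d}$ take values in $\Y$, and then invoke Lemma \ref{lem:lemmaId} to control $\int_{I_d} d\xx$.
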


\begin{proof}\textbf{ of Proposition \ref{prop-almost}:}
				Since $y$ takes values in the compact set $\Y$, it is clear that $ f_{\regparam_d,d}^\alpha \in \Leb^\infty(\X)$. We have 
\begin{align}
&\int_{\X} |f(\mathbf{x})- f_{\regparam_d,d}^\alpha(\mathbf{x})|d\mathbf{x}  \nonumber\\
= &\int_{\X\setminus I_d} |f(\mathbf{x})- f_{\regparam_d,d}^\alpha(\mathbf{x})| d\mathbf{x} + \int_{I_d} |f(\mathbf{x})- f_{\regparam_d,d}^\alpha(\mathbf{x})|d\mathbf{x}\nonumber\\
\leq & \int_{I_d^c} |f(\mathbf{x})- f_{\regparam_d,d}^\alpha(\mathbf{x})| d\mathbf{x} + \Vert f- f_{\regparam_d,d}^\alpha\Vert_{\Leb^\infty(I_d)}\int_{I_d} d\mathbf{x}\nonumber\\
\leq &  \int_{I_d^c} |f(\mathbf{x})- f_{\regparam_d,d}^\alpha(\mathbf{x})| d\mathbf{x} \nonumber\\
&+  \mathrm{diam}(\Y)\frac{1+\alpha}{1 - \alpha} \frac{8(m+m_0) (3r)^{2r} e^{\frac{p^2}{d}}}{p^p e^{2r-p} d^{r-p}}   \label{convergence-ae}
\end{align}
where we have used Lemma \ref{lem:lemmaId} for the last inequality. 
\end{proof}

\subsection{Proofs of the main theorems}\label{sec:proofs}

We are now in position of proving Theorem \ref{th:convergenceRobust}. We start with the Lipschitz case, which is the simplest and conveys most of the ideas.
\begin{proof}\textbf{ of Theorem \ref{th:convergenceRobust} (ii) Using Proposition \ref{prop-almost}}: It remains to bound the term
\begin{equation}
\int_{I_d^c} |f(\mathbf{x})- f_{\regparam_d,d}^\alpha(\mathbf{x})| d\mathbf{x} .
\end{equation}
For any $\mathbf{x} \in \X$ define
\begin{equation}
\mathbf{u}_d(\mathbf{x}) \in \mathrm{argmin}_{\mathbf{u}\in \X} \left\|\left(\mathbf{x},\:  f_{\regparam_d,d}^\alpha(\mathbf{x})\right)-\left(\mathbf{u},\: f(\mathbf{u})\right)\right\|,
\end{equation}
with an arbitrary choice in the case where the argmin is not unique. Note that by continuity, the graph of $f$ is closed so that the minimum is attained. Using the definition of $I_d$ in \eqref{eq:defId}, the fact that $\mathbf{x} \in I_d^c$ implies that 
\begin{equation}
\left(\mathbf{x},\:  f_{\regparam_d,d}^\alpha(\mathbf{x}) \right) \in S_d.
\end{equation}
Moreover, Theorem \ref{th:supportEstimation} implies that

\begin{align}
\label{eq:mainIneqSupportSuccess}
| f_{\regparam_d,d}^\alpha(\mathbf{x})-f(\mathbf{u}_d(\mathbf{x}))|&\leq \frac{\delta_0}{\sqrt{d} - 1},\\
\Vert \mathbf{x}-\mathbf{u}_d(\mathbf{x})\Vert &\leq \frac{\delta_0}{\sqrt{d} - 1}, \nonumber
\end{align}

 where $\Vert \cdot\Vert$ denotes the usual Euclidean distance in $\mathbb{R}^{p-1}$. 
Therefore, using Lipschitz continuity of $f$, we have, for any $\mathbf{x} \in I_d^c$,
\begin{align}
| f_{\regparam_d,d}^\alpha(\mathbf{x}) - f(\mathbf{x})|&\leq | f_{\regparam_d,d}^\alpha(\mathbf{x})-f(\mathbf{u}_d(\mathbf{x}))| + |f(\mathbf{x})-f(u_d(\mathbf{x}))|\nonumber\\
&\leq   \frac{\delta_0}{\sqrt{d} - 1}\left( 1 + L \right).
\label{first-inequality}
\end{align}
We deduce that
\begin{align}
\int_{I_d^c} |f(\mathbf{x})- f_{\regparam_d,d}^\alpha(\mathbf{x})| d\mathbf{x}  \leq  \mathrm{vol}(\X)  \frac{\delta_0}{\sqrt{d} - 1}\left( 1 + L \right)				\label{eq:boundIdcIntegral}
\end{align}
which concludes the proof.
\end{proof}

We now turn to case (i), starting with the pointwise convergence.

\begin{proof}\textbf{ of Theorem \ref{th:convergenceRobust} (i):} 
We rely on a slighlty different use of Lemma \ref{lem:lemmaId}.
Choose $r = p + 2$ and let
\begin{align}
I := \left\{ \mathbf{x} \in \X : \forall d_0 \in \NN, \,\exists d \in \NN,\, d \geq d_0, \, \mathbf{x} \in I_d \right\} = \cap_{d_0 \in \NN} \cup_{d \geq d_0} I_d.
\label{eq:defI}
\end{align}
Lemma \ref{lem:lemmaId} ensures that $\mathrm{vol}(I_d) = O(1/d^2)$ so that 
\begin{align*}
\mathrm{vol}\left( \cup_{d \geq d_0} I_d \right) \leq \sum_{d \geq d_0} \mathrm{vol}(I_d) \quad \underset{d_0 \to \infty}{\to} 0.
\end{align*} 
We have $\mathrm{vol}(I) = \lim_{d_0 \to \infty}\left(  \cup_{d \geq d_0} I_d  \right) = 0$. This means that we have the two following properties, for almost every $\mathbf{x} \in \X$:
\begin{itemize}
\item $f$ is continuous at $\mathbf{x}$ (by assumption),
\item $\exists d_0 \in \NN$, $\forall d \in \NN$, $d \geq d_0$, $\mathbf{x} \not \in I_d$ (because $\mathrm{vol}(I) = 0)$.
\end{itemize}
Fix any such $\mathbf{x}$ and for any $d \geq d_0$ consider
\begin{equation}
(\mathbf{u}_d,v_d) \in \mathrm{argmin}_{\mathbf{u}\in \X, v \in \Y} \left\|\left(\mathbf{x},\:  f_{\regparam_d,d}^\alpha(\mathbf{x}) \right)-\left( \mathbf{u},\: v \right)\right\|, \qquad \mathrm{s.t.} \qquad (\mathbf{u},v) \in \spt{\mu}
\end{equation}
with an arbitrary choice when the argmin is not unique.
Note that the support of $\mu$ is actually the closure of the graph of $f$ so that the minimum is attained. Using the definition of $I_d$ in \eqref{eq:defId}, we have that $\mathbf{x} \in I_d^c$ for all $d \geq d_0$ which implies that 
\begin{equation}
\left(\mathbf{x},\:  f_{\regparam_d,d}^\alpha(\mathbf{x}) \right) \in S_d,
\end{equation}
and Theorem \ref{th:supportEstimation} implies that
\begin{align*}
| f_{\regparam_d,d}^\alpha(\mathbf{x})-v_d| &\leq \frac{\delta_0}{\sqrt{d} - 1},\\
 \Vert \mathbf{x}-\mathbf{u}_d\Vert &\leq \frac{\delta_0}{\sqrt{d} - 1}. \nonumber
\end{align*}
Since $(\mathbf{u}_d,v_d) \in \spt{\mu}$ and $\spt{\mu}$ is the closure of the graph of $f$, there exists $\mathbf{h}_d \in \X$, such that
\begin{align}
| f_{\regparam_d,d}^\alpha(\mathbf{x})-f(\mathbf{h}_d)| &\leq \frac{2\delta_0}{\sqrt{d} - 1},\nonumber\\
 \Vert \mathbf{x}-\mathbf{h}_d\Vert  &\leq \frac{2\delta_0}{\sqrt{d} - 1}.
\label{eq:mainIneqNonContinuous}
\end{align}
This concludes the proof of pointwise convergence since
\begin{align*}
| f_{\regparam_d,d}^\alpha(\mathbf{x}) - f(\mathbf{x})|&\leq |f_{\regparam_d,d}^\alpha(\mathbf{x})-f(\mathbf{h}_d)| + |f(\mathbf{x})-f(\mathbf{h}_d)|\nonumber
\end{align*}
and both terms tend to $0$ as $d \to \infty$, using \eqref{eq:mainIneqNonContinuous} and continuity of $f$ at $\mathbf{x}$.

Convergence in $\Leb^1$ follows from Egorov's Theorem (see \textit{e.g.} \cite[chapter 18]{royden2010real}): for any $\epsilon > 0$, there exists $S_\epsilon \subset \X$, measurable, of Lebesgue measure smaller than $\epsilon$ such that $ f_{\regparam_d,d}^\alpha \to f$ uniformly on $\X \setminus S_\epsilon$. We have
\begin{align*}
\Vert  f_{\regparam_d,d}^\alpha - f \Vert_{\Leb^1(\X)} &= \int_{\X} | f_{\regparam_d,d}^\alpha(\mathbf{x}) - f(\mathbf{x})| d\mathbf{x} \\
&= \int_{S_{\epsilon}} |f_{\regparam_d,d}^\alpha(\mathbf{x}) - f(\mathbf{x}) d\mathbf{x}  + \int_{\X \setminus S_{\epsilon}} |f_{\regparam_d,d}^\alpha(\mathbf{x}) - f(\mathbf{x})| d\mathbf{x} \\
&\leq \mathrm{vol}(S_\epsilon)\: \mathrm{diam}(\Y) + \mathrm{vol}(\X) \Vert f_{\regparam_d,d}^\alpha - f(\mathbf{x}) \Vert_{\Leb^\infty(\X \setminus S_{\epsilon})} \\
&\leq \epsilon\  \mathrm{diam}(\Y) + \mathrm{vol}(\X) \Vert f_{\regparam_d,d}^\alpha - f(\mathbf{x}) \Vert_{\Leb^\infty(\X \setminus S_{\epsilon})}. 
\end{align*}
By uniform convergence the second term goes to $0$ as $d \to \infty$, this shows that $$\lim\sup_{d \to \infty}\Vert f_{\regparam_d,d}^\alpha- f \Vert_{\Leb^1(\X)} \leq \epsilon\ \mathrm{diam}(\Y).$$ Moreover, since $\epsilon > 0$ was arbitrary, the limit is $0$.
\end{proof}

We now turn to the proof of convergence in $\Leb^1$ norm for univariate functions of bounded variation.
\begin{lemma}
\label{lem:packing}
Let $f \colon \RR \mapsto \RR$ be such that $V(f)$ is finite and $f$ vanishes outside a segment $I$. Let $a,b >0$ be positive constants. Let
\begin{align*}
J = \left\{ t \in I : \exists u, |t-u| \leq b, u \in I, |f(u) - f(t)|>a \right\}.
\end{align*}
Then
\[
\int_{J}dx \leq 2V(f)\frac{b}{a}.
\]
\label{lem:techLemBVfun}
\end{lemma}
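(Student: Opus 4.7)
The plan is to reduce the lemma to a Fubini-type identity applied to the cumulative variation function of $f$. Define
\begin{align*}
\phi(x) := \sup\left\{ \sum_{i=1}^{n} |f(x_i) - f(x_{i-1})| : n \in \NN,\ x_0 < x_1 < \cdots < x_n \leq x \right\},
\end{align*}
which is non-decreasing and bounded above by $V(f)$. By the standard argument of inserting a point into a near-optimal partition (which can only increase the variation sum by the triangle inequality), one has $\phi(\beta) - \phi(\alpha) = V_f([\alpha,\beta])$ for $\alpha \leq \beta$, where $V_f([\alpha,\beta])$ denotes the total variation of $f$ restricted to $[\alpha,\beta]$.

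First I would exploit the definition of $J$ to lower bound the increment $\phi(t+b) - \phi(t-b)$ pointwise on $J$. Given $t \in J$ with associated witness $u$ satisfying $|t-u| \leq b$ and $|f(u)-f(t)| > a$, the two-point partition $\{\min(t,u),\max(t,u)\}$ is admissible inside $[t-b,t+b]$, so
\begin{align*}
\phi(t+b) - \phi(t-b) = V_f([t-b,t+b]) \geq |f(u)-f(t)| > a.
\end{align*}

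Next I would integrate this pointwise bound and swap the order of integration. Regarding $\phi$ as the distribution function of the non-negative Radon measure $d\phi$ on $\RR$ of total mass $V(f)$, one can write $\phi(t+b)-\phi(t-b) = \int_\RR \mathbf{1}_{\{|s-t| \leq b\}}\, d\phi(s)$. Fubini's theorem (justified by non-negativity of the integrand) then yields
\begin{align*}
\int_\RR \bigl(\phi(t+b)-\phi(t-b)\bigr)\, dt = \int_\RR \int_\RR \mathbf{1}_{\{|s-t| \leq b\}}\, dt\, d\phi(s) = \int_\RR 2b\, d\phi(s) = 2b\, V(f).
\end{align*}
Combining with the previous step gives $a \int_J dx \leq \int_\RR (\phi(t+b)-\phi(t-b))\, dt \leq 2b\, V(f)$, which rearranges to the claimed inequality. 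No genuine obstacle appears: the only mildly delicate point is the identification of increments of $\phi$ with restricted total variation, and this is entirely classical once one exploits the fact that $f$ vanishes outside the bounded interval $I$ so that $\phi$ takes finite values everywhere.
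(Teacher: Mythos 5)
Your proof is correct and takes a genuinely different route from the paper's. The paper argues by a greedy packing process: it iteratively picks $t_k\in J$, removes the window $[t_k-b,t_k+b]$ from the remaining set, bounds the number of iterations $K$ so that $\int_J dx\leq 2Kb$, and then bounds $K$ by arguing that the witness intervals $[t_k,u_k]$ are pairwise disjoint and hence $Ka\leq\sum_k|f(t_k)-f(u_k)|\leq V(f)$. You instead introduce the cumulative variation $\phi$ and obtain the estimate in one stroke from the Fubini identity $\int_{\RR}\bigl(\phi(t+b)-\phi(t-b)\bigr)\,dt=2bV(f)$ together with the pointwise lower bound $\phi(t+b)-\phi(t-b)>a$ on $J$. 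The comparison is instructive. Your argument is cleaner in one respect: the paper's disjointness claim for the intervals $[t_k,u_k]$ is not forced by the construction (the $t_k$'s are $b$-separated, but the witnesses $u_k$ can point backwards and overlap a previous $[t_j,u_j]$), whereas your Fubini argument needs no such combinatorial control. It also generalizes painlessly, e.g.\ to a varying threshold $a(t)$ or to higher-dimensional covariograms. The mild price you pay is measure-theoretic bookkeeping: $\phi$ need not be right-continuous, so strictly speaking one should pass to its right-continuous modification to define the Stieltjes measure $d\phi$, noting that this only changes the inner estimate in the favorable direction and that $f$ vanishing outside $I$ guarantees $\phi(-\infty)=0$ and $\phi(+\infty)=V(f)$. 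With that caveat made explicit, the proof is complete and yields the same constant $2V(f)b/a$ as the statement.
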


\begin{proof}\textbf{ of Lemma \ref{lem:packing}:}
This is a packing argument.
Let $J_0 = J$ and follow the recursive process, for $k \in \NN$, $k \geq 1$,
\begin{align*}
\begin{cases}
\text{if }&J_{k-1} \cap J \neq \emptyset\\
&\text{let } t_k \in J_{k-1} \cap J,\, u_k \in J,\, |t_k-u_k| \leq b\\
&\text{let } J_k = J_{k-1} \setminus [t_k-b,t_k+b]\\
\text{otherwise }&\text{stop}.
\end{cases}
\end{align*}
This process must stop after a finite number of iterations. Indeed, the set $I_k = I \setminus \cup_{i=1}^k [t_k-b,t_k+b]$ consists of a finite union of intervals. At iteration $k$, either one of these intervals is a subset of $[t_{k+1}-b,t_{k+1}+b]$ and then it is removed entirely from $I_k$, or otherwise $t_{k+1}$ is contained in an interval which contains either $[t_{k+1}-b,t_{k+1}]$ or  $[t_{k+1},t_{k+1}+b]$ and the Lebesgue measure of $I_{k+1}$ is reduced by at least $b$ compared to $I_k$, possibly creating a new interval.

Let $K$ be the last iteration, so that $J_{K} \cap J = \emptyset$. By the iterative process, at each step, the measure of $J_k$ is reduced by at most $2b$ and we have
\begin{align*}
0 = \int_{J_K\cap J} dx \geq \int_{J_{K-1} \cap J} dx - 2 b \geq \ldots \geq \int_{J_{0} \cap J} dx - 2Kb
\end{align*}
so that
\begin{align*}
\int_{J}dx \leq 2Kb.
\end{align*}
Finally, since the intervals $[t_k,u_k]$, $k = 1,\ldots, K$ are disjoint, we have
\begin{align*}
V(f) \geq \sum_{i=1}^K |f(t_k) - f(u_k)| \geq K a \geq \frac{a}{2b} \int_{J}dx,
\end{align*}
which proves the desired result.
\end{proof}

\begin{proof}\textbf{ of Theorem \ref{th:BV1DRobust}:}
For any $d > 1$ consider \begin{align*}
J_d = \left\{ x \in \X : \exists u \in \X, |x-u| \leq  2\delta_0 / (\sqrt{d} - 1),  |f(u) - f(t)|>d^{-1/4} \right\}.
\end{align*}
By Lemma \ref{lem:techLemBVfun}, for any $d > 1$ we have 
\begin{align}
 \mathrm{vol}(J_d) \leq \frac{4\delta_0 d^{1/4}V(f)}{\sqrt{d} - 1}.
\label{eq:Jd}
\end{align}
Choose any $d >1$ and any $x$ such that $x \not \in I_d$ and $x \not \in J_d$.  Consider 
\begin{equation}
(u_d,v_d) \in \mathrm{argmin}_{u\in \X, v \in \Y} \left\|\left( x,\: f_{\regparam_d,d}^\alpha(x)\right)-\left( u,\: v \right) \right\| \quad \mathrm{s.t.} \quad (u,\:v) \in \spt{\mu} 
\end{equation}
with an arbitrary choice when the argmin is not unique.

Note that the support of $\mu$ is actually the closure of the graph of $f$ so that the minimum is attained. Using the definition of $I_d$ in \eqref{eq:defId}, $x \in I_d^c$ implies that 
\begin{equation}
\left( x,\: f_{\regparam_d,d}^\alpha(x) \right) \in S_d
\end{equation}
and Theorem \ref{th:supportEstimation} implies that
\begin{align*}
|f_{\regparam_d,d}^\alpha(x)-v_d| &\leq \frac{\delta_0}{\sqrt{d} - 1},\\
|x-u_d| &\leq \frac{\delta_0}{\sqrt{d} - 1}.
\end{align*}
Since $(u_d,v_d) \in \spt{\mu}$ and $\spt{\mu}$ is the closure of the graph of $f$, there exists $ h_d \in \X$, such that
\begin{align}
|f_{\regparam_d,d}^\alpha(x)-f(h_d)| &\leq \frac{2\delta_0}{\sqrt{d} - 1},\nonumber\\
|x-h_d| &\leq \frac{2\delta_0}{\sqrt{d} - 1}.
\label{eq:mainIneqBV}
\end{align}
Now since $x \not \in J_d$ and $|x-h_d| \leq \frac{2\delta_0}{\sqrt{d} - 1}$, we have $|f(x) - f(h_d)|\leq d^{-1/4}$. This entails
\begin{align*}
|f_{\regparam_d,d}^\alpha(x) - f(x)|&\leq |f_{\regparam_d,d}^\alpha(x)-f(h_d)| + |f(x)-f(h_d)|\\
&\leq \frac{2\delta_0}{\sqrt{d} - 1} + d^{-1/4}.
\end{align*}
The latter expression does not depend on $x$ which was arbitrarily chosen outside of $I_d$ and $J_d$. We deduce that
\begin{align*}
&\Vert f-f_{\regparam_d,d}^\alpha \Vert_{\Leb^1(\X)}\\
\leq&\: \mathrm{vol}(\X) \left( \frac{2\delta_0}{\sqrt{d} - 1} + d^{-1/4} \right) + \mathrm{diam}(\Y) \left(  \mathrm{vol}(I_d) + \mathrm{vol}(J_d) \right)
\end{align*}
and the result follows by invoking Lemma \ref{lem:lemmaId} and using Inequality \eqref{eq:Jd}.
\end{proof}

\section{Numerical examples}\label{sec_numeric}

\subsection{Computational tractability}

Working with a large class of approximation functions may pose computational difficulties. An advantage of our Christoffel-Darboux semi-algebraic approximant is that it can be computed efficiently. 

If the input moments are exactly known and given in rational form, the Christoffel-Darboux polynomial to be partially minimized is obtained through formal inversion of the moment matrix. This operation has efficient implementations, namely polynomial time algorithms over rational entries, an example is given in \cite{bareiss1968sylvester}.

In most of the applications, the moments are however known only approximately, and the Christoffel-Darboux polynomial is constructed via the numerical eigenvalue decomposition of the approximate moment matrix. Since the moment matrix is symmetric, its eigenvalue decomposition can be computed efficiently with numerically stable algorithms in floating point arithmetic
\cite{GoluVanl96}.

In addition, the computational overhead of evaluating the semi-algebraic approximant at a given point $\xx$ is that of minimizing a univariate polynomial over the segment $[-1,1]$. The Lipschitz constant of a univariate polynomial with coefficients $\pp = (p_0,\ldots,p_{2d})$ over $[-1,1]$ is at most $\|\pp\|_1$ and hence grid search finds an  $\epsilon$-accurate solution  to this problem using $\frac{2\|\pp\|_1}{\epsilon}$ evaluations. In our case the entries of $\pp$ are polynomials in $\xx$ which are deduced from moment data  so that for a fixed $d$ estimating and evaluating our semi-algebraic approximant up to a fixed arbitrary precision with rational inputs (moment matrix and $\xx$) can be done in polynomial time. Note also that our analysis shows that a level of precision of the order $d^{-p-2}$ is sufficient so that the cost of the overall procedure has a complexity which is polynomial in the bit size of the moment matrix, the target evaluation point $\xx$, as well as in $d$, the degree bound.

\subsection{Prototype code}

We provide a simple Matlab prototype to validate our algorithm. All the examples described below are reproducible, and the Matlab scripts can be found at
\begin{verbatim}
homepages.laas.fr/henrion/software/momgraph
\end{verbatim}
The calling syntax of the main routine is
\begin{verbatim}
[Y,P] = momgraph(M,X)
\end{verbatim}
It takes as an input an approximate moment matrix
\[
\Mmat = \int_\X \mathbf{b}(\xx,f(\xx))\mathbf{b}(\xx,f(\xx))^\top d\xx
\]
(in Matlab floating point format)
for $\mathbf{b}$ the monomial basis vector (in grevlex ordering), and a collection of points 
\[
\left\{\xx_1, \, \xx_2, \ldots, \xx_N\right\} \subset \X\subset [-1,1]^{p-1}
\]
(in Matlab floating point format) with $p>1$.
It outputs an approximation \[
\left\{y_1, \, y_2 , \ldots, y_N\right\} \subset \Y:=[-1,1]
\]
(in Matlab floating point format) of the values $\left\{f(\xx_1), \,f(\xx_2) ,\ldots, f(\xx_N)\right\}$, as well as a matrix $\mathbf{P}$ of coefficients (in Matlab floating point format) of the vector of polynomials $\mathbf {p}$ whose sum of squares yields the Christoffel-Darboux polynomial.

Our implementation is straightforward, not optimized for efficiency. The regularization parameter $\regparam$ is set to the default value of $10^{-8}$, and the Christoffel-Darboux polynomial is computed from the eigenvalue decomposition (Matlab's command {\tt eig}) of the approximate moment matrix $\Mmat+\regparam\Imat$.

\subsection{Sign function}

Consider the measure supported on the graph of the sign function $f(\xx) = \sign(\xx)$
whose moments in the monomial basis on $\X:=[-1,1]$ are
\[
\int_{-1}^1 \xx^{a_1} f(\xx)^{a_2} d\xx = (-1)^{a_2}\int_{-1}^0 \xx^{a_1}d\xx + (1)^{a_2} \int_0^1 \xx^{a_1}d\xx = \frac{(-1)^{a_2}(0^{a_1+1}-(-1)^{a_1+1})+1-0^{a_1+1}}{a_1+1}
\]
for $(a_1,\:a_2) \in \N^2$.
Here is an example of the use of {\tt momgraph} to recover the sign function from the (floating point approximations) of the (exact) moments:
\begin{verbatim}
>> M  % moment matrix of degree 4 for the sign function

M =

    2.0000         0         0    0.6667    1.0000    2.0000
         0    0.6667    1.0000         0         0         0
         0    1.0000    2.0000         0         0         0
    0.6667         0         0    0.4000    0.5000    0.6667
    1.0000         0         0    0.5000    0.6667    1.0000
    2.0000         0         0    0.6667    1.0000    2.0000

>> X = linspace(-1,1,1e3)'; % samples for evaluation
>> [Y,P] = momgraph(M,X); % Christoffel-Darboux approximation
>> plot(X,graph(X),'-r','linewidth',6); hold on; % exact graph
>> plot(X,Y,'-k','linewidth',3); xlabel('x'); ylabel('y');  % approximate graph
\end{verbatim}
This code corresponds to a degree 4 approximation from a moment matrix of size 6 with 15 moments.
A degree 2 approximation can be obtained from its 3 by 3 submatrix
\begin{verbatim}
>> M(1:3,1:3)

ans =

    2.0000         0         0
         0    0.6667    1.0000
         0    1.0000    2.0000
\end{verbatim}
\begin{figure}
\centering
 \includegraphics[width=.45\textwidth]{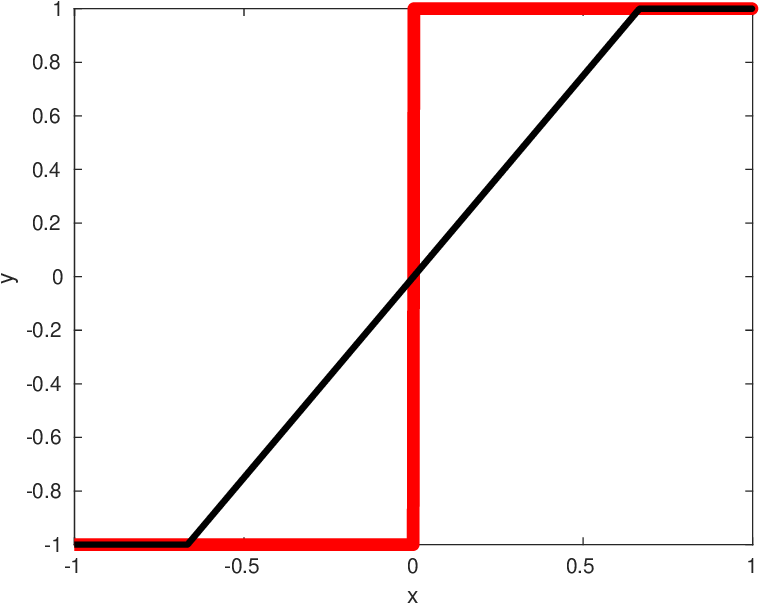} \includegraphics[width=.45\textwidth]{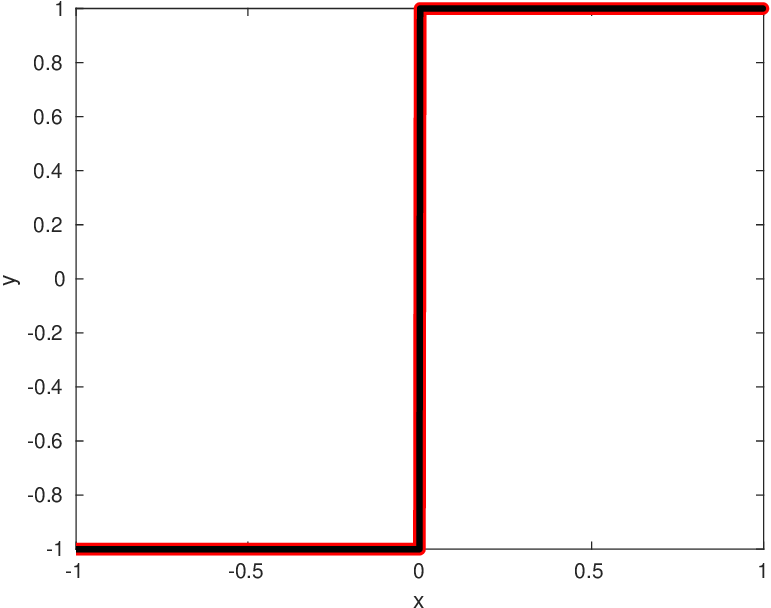}
 \caption{Graph of the sign function (red) and its degree 2 (left) and degree 4 (right) semi-algebraic approximations (black).}
 \label{fig:sign}
 \end{figure}
On Figure \ref{fig:sign} we see that the resulting degree 4 semi-algebraic approximation cannot be distinguished from the graph of the sign function. Our semi-algebraic approximant is $\xx \mapsto \mathrm{argmin}_{y \in [-1,1]} q(\xx,y)$ with $q$ the Christoffel-Darboux polynomial constructed as the sum of squares of the polynomials returned by the {\tt momgraph} function:
\begin{verbatim}
>> mpol x y; b = mmon([x y],2); % GloptiPoly monomial vector of degree 2
>> P*b

6-by-1 polynomial vector

(1,1):7071.0678-7071.0678y^2
(2,1):0.86713+9.4x^2-9.7305xy+0.86713y^2
(3,1):2.4315x-1.3013y
(4,1):-0.53517+1.2757x^2+1.137xy-0.53517y^2
(5,1):0.29635x+0.55374y
(6,1):0.29443+0.10761x^2+0.15643xy+0.29443y^2
\end{verbatim}
We see in particular that the first polynomial is $(\xx,y)\mapsto 1-y^2$ with a large scaling factor. This polynomial vanishes on the graphs of the functions $y \mapsto -1$ and $y \mapsto 1$. The other polynomials are instrumental to determining which one of the two graphs corresponds to a given value of $\xx$.

\subsection{Discontinuous functions}

Let us revisit the discontinuous univariate examples of \cite{eckhoff1993accurate}. Since in this case we do not have the analytic moments of the measure supported on the graph of the function $f$ to input to our algorithm, we use the empirical moment matrix computed by uniform sampling, i.e.
\begin{equation}\label{eq:empirical}
M=\frac{1}{N} \sum_{k=1}^N \mathbf{b}(\xx_k, f(\xx_k))\mathbf{b}(\xx_k, f(\xx_k))^\top
\end{equation}
for $N$ sufficiently large, i.e. $10^3$, and $\mathbf{b}$ the monomial basis vector. 
Degree 10 semi-algebraic approximations are reported on Figure \ref{fig:eckhoff} for three benchmarks \cite[Examples 65, 66, 67]{eckhoff1993accurate}
of discontinuous functions $f$, appropriately scaled in $\X=\Y=[-1,1]$.
\begin{figure}
\centering
\includegraphics[width=.3\textwidth]{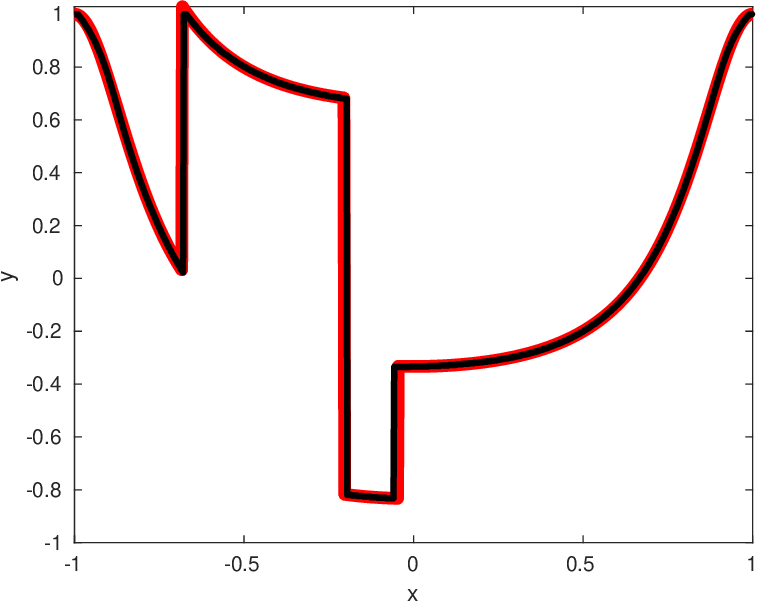} 
\includegraphics[width=.3\textwidth]{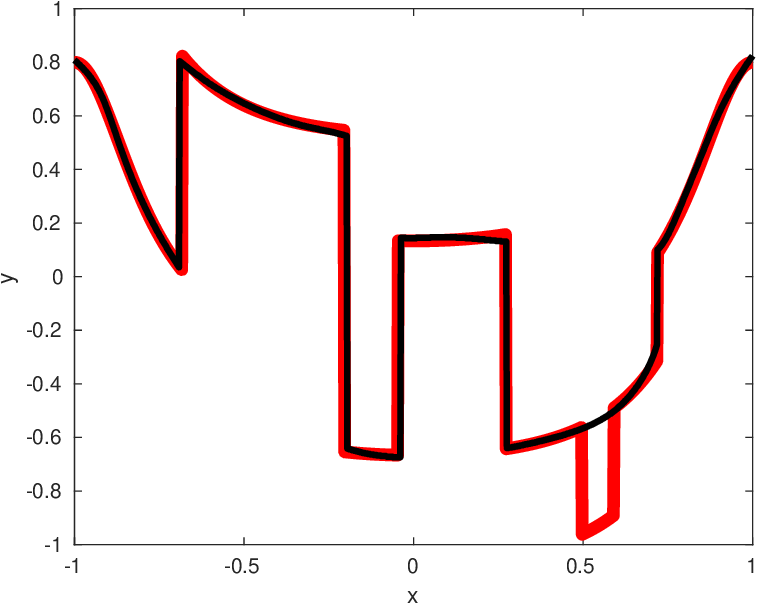}
\includegraphics[width=.3\textwidth]{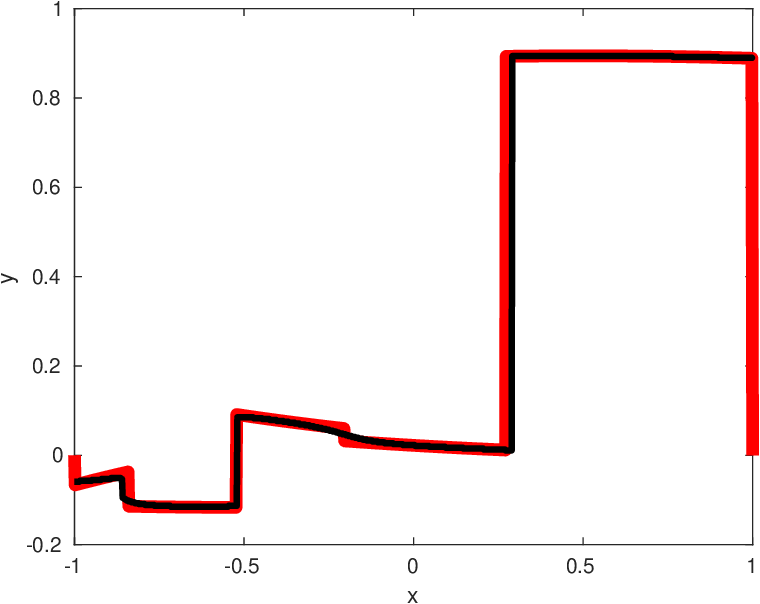}
 \caption{Degree 10 semi-algebraic approximations (black) for the discontinuous univariate functions (red) of Examples 65 (left), 66 (middle) and 67 (right) of reference \cite{eckhoff1993accurate}.}
 \label{fig:eckhoff}
 \end{figure}
We observe that the second rightmost discontinuity in the middle example is not detected. Increasing the degree of the approximations does not fix the issue, and we believe that it is due to the poor resolution of the monomial basis. It would be more appropriate to use here a complex exponential basis (i.e. Fourier coefficients) or an orthogonal basis (e.g. Chebyshev or Legendre polynomials).

\subsection{Interpolation}

\begin{figure}
\centering
\includegraphics[width=.45\textwidth]{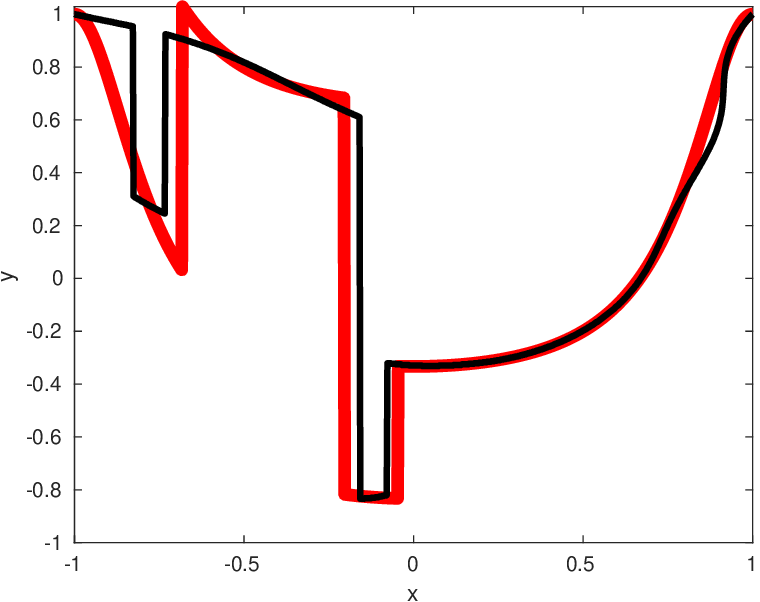} 
\includegraphics[width=.45\textwidth]{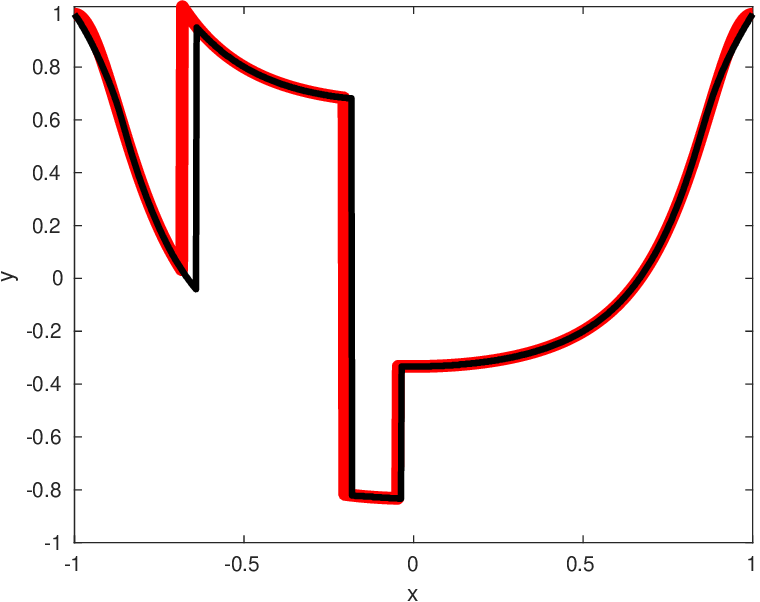}
\includegraphics[width=.45\textwidth]{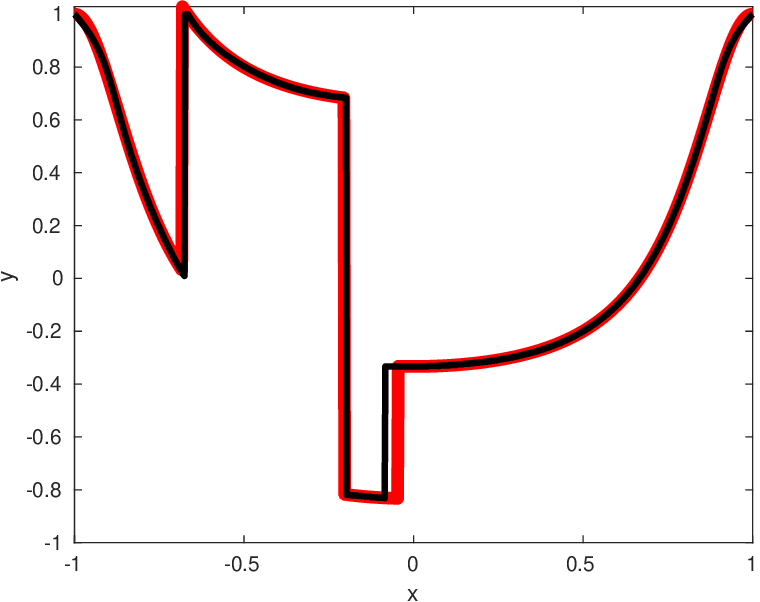}
\includegraphics[width=.45\textwidth]{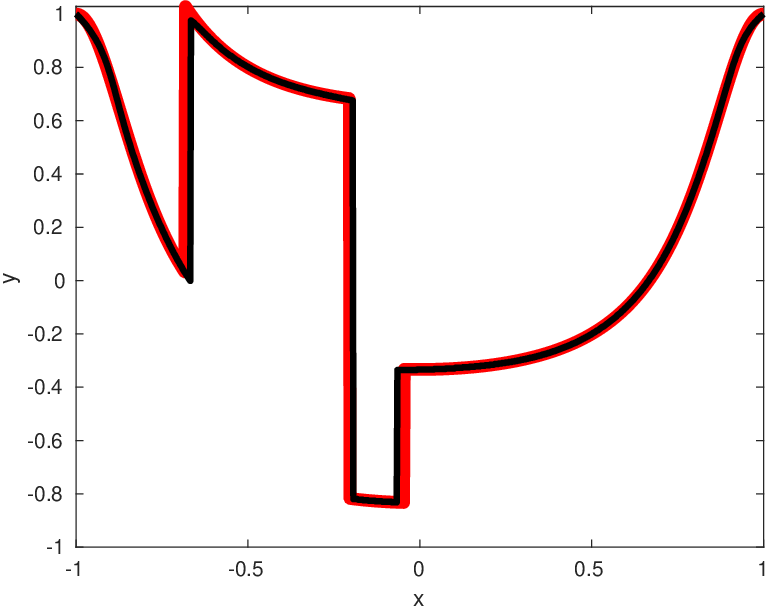}
 \caption{Degree 10 semi-algebraic approximations (black) of a discontinuous function (red) computed from empirical moments evaluated at 10 (upper left), 20 (upper right), 30 (lower left) and 40 (lower right) uniformly distributed samples.}
 \label{fig:eckhoff_sampled}
 \end{figure}
Suppose now that we have access only to the values $\{f(\xx_k)\}_{k=1,\ldots,N}$ of the function to be approximated at given sampling points $\{\xx_k\}_{k=1,\ldots,N}$, for $N$ small. Our algorithm takes as input the empirical moment matrix \eqref{eq:empirical}.
On Figure \ref{fig:eckhoff_sampled}, we revisit \cite[Example 65]{eckhoff1993accurate} to study the effect of the number of samples $N$ on the quality of the approximation, for a uniform distribution of samples.
We see that with 20 samples the function is already well approximated.

\subsection{Recovering trajectories for optimal control}

In \cite{lasserre2008nonlinear}, the moment-SOS hierarchy is applied to solve numerically non-linear optimal control of ODEs with polynomial data and semi-algebraic state and control constraints. Non-linear optimal control is formulated as a linear problem on moments of occupation measures supported on optimal trajectories. Let us show how numerical approximations of these moments obtained by semidefinite programming can be input to our algorithm to approximate optimal state and control trajectories.

Let us revisit the state-constrained double integrator problem of \cite[Section 5.1]{lasserre2008nonlinear} to approximate the time optimal trajectories. After a scaling of time, state and control, we use the Matlab interface GloptiPoly 3 and the conic solver MOSEK to compute the pseudo-moments of the occupation measure of degree up to 8. This can be achieved in less than 2 seconds on a standard desktop computer. From this output, we construct the 45-by-45 moment matrices of the control and state marginals, conditioned w.r.t. time. Using our notations, the independent variable $\mathbf{x}$ is time, while the dependent variable $y$ is respectively the control, the first state and the second state. For this example, the analytic trajectories are available for comparison.
\begin{figure}
\centering
\includegraphics[width=.3\textwidth]{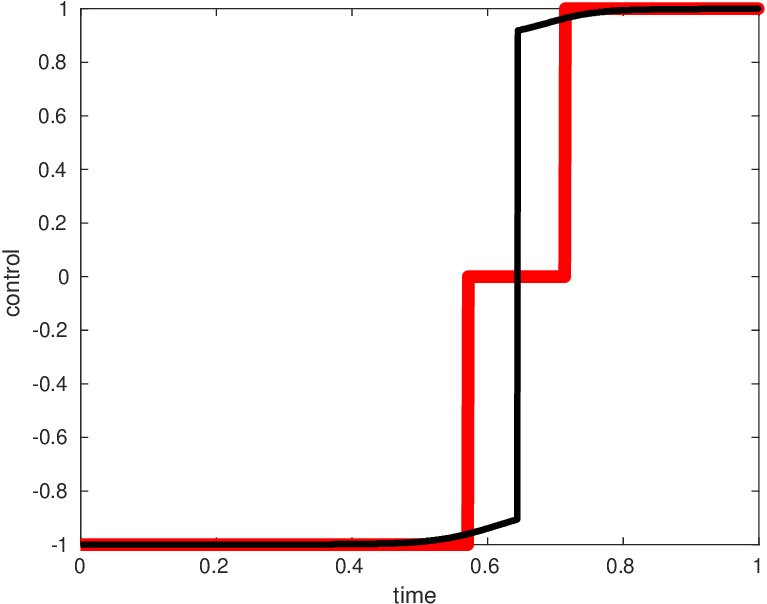} 
\includegraphics[width=.3\textwidth]{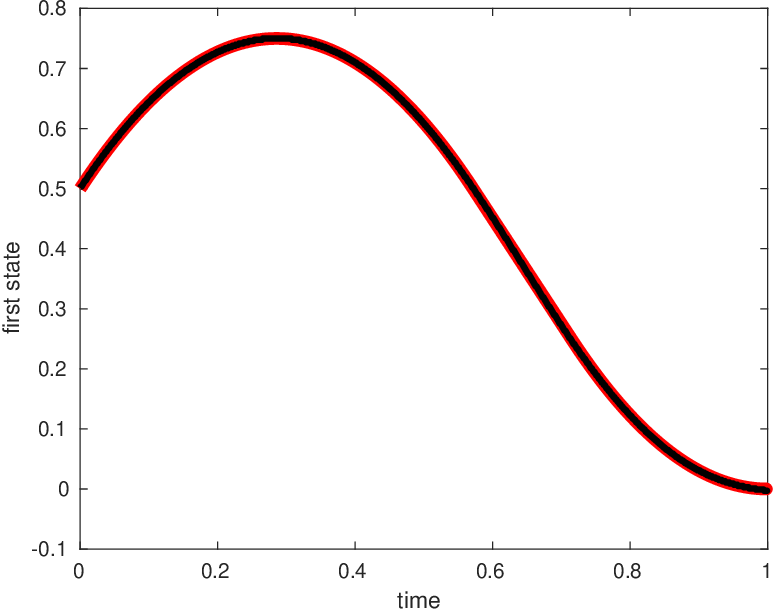}
\includegraphics[width=.3\textwidth]{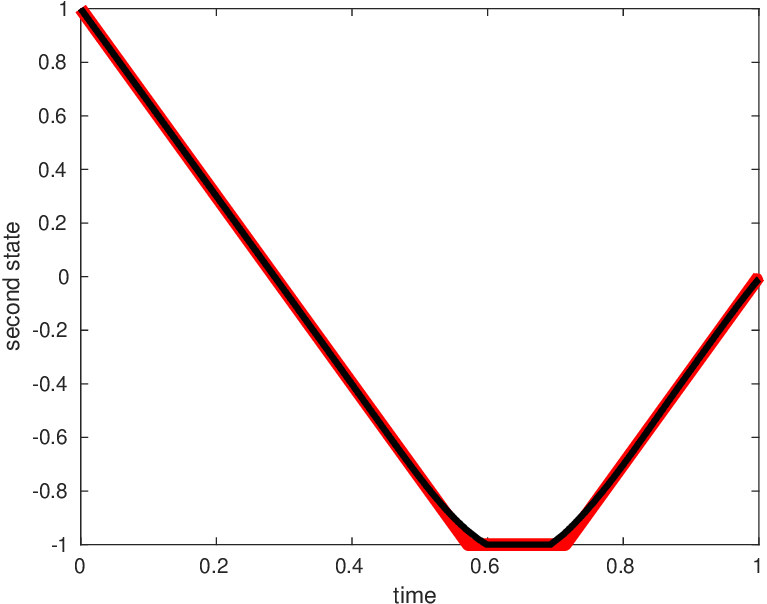}
 \caption{Minimum time double integrator with state constraint: control (left), first state (middle) and second state (right) trajectories (red) and their degree 8 semi-algebraic approximations (black) constructed from the pseudo-moments of the occupation measure.}
 \label{fig:ocp}
 \end{figure}
We see on Figure \ref{fig:ocp} that the state trajectory approximations are tight, whereas the control trajectory approximation misses partly the central region corresponding to the saturation of the second state. Indeed, since it is obtained by solving numerically the degree 8 semidefinite relaxation of the moment-SOS hierarchy, the approximated moment matrix differs from the exact moment matrix, and this has an impact on the quality of the Christoffel-Darboux approximation.
\begin{figure}
\centering
\includegraphics[width=.3\textwidth]{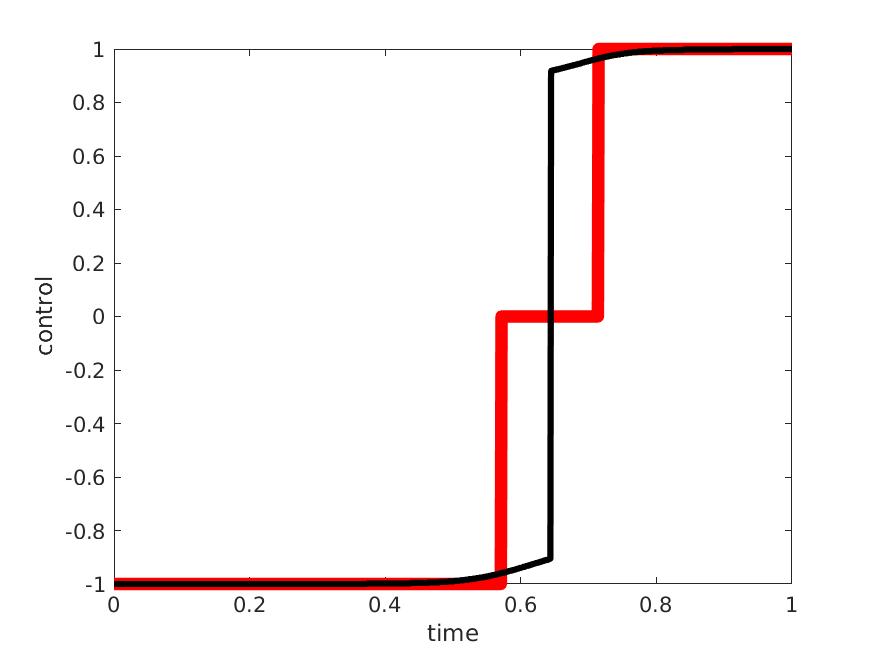} 
\includegraphics[width=.3\textwidth]{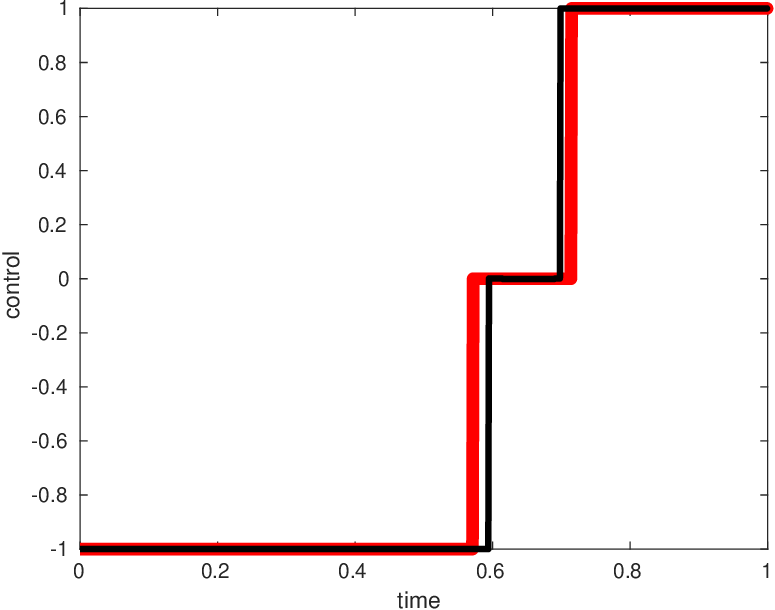}
 \caption{Minimum time double integrator with state constraint: control trajectories (red) and their degree 8 semi-algebraic approximations (black) constructed from the pseudo-moments (left, same as left of Figure \ref{fig:ocp}) and from the analytic moments (right)  of the occupation measure.}
 \label{fig:ocpa}
 \end{figure}
For this example, we can construct analytically the exact moment matrix of the control trajectory and observe that indeed its Christoffel-Darboux semi-algebraic approximation of degree 8 identifies well the optimal control trajectory switching times, see Figure \ref{fig:ocpa}.

\subsection{Bivariate examples}

\begin{figure}
\centering
\includegraphics[width=.45\textwidth]{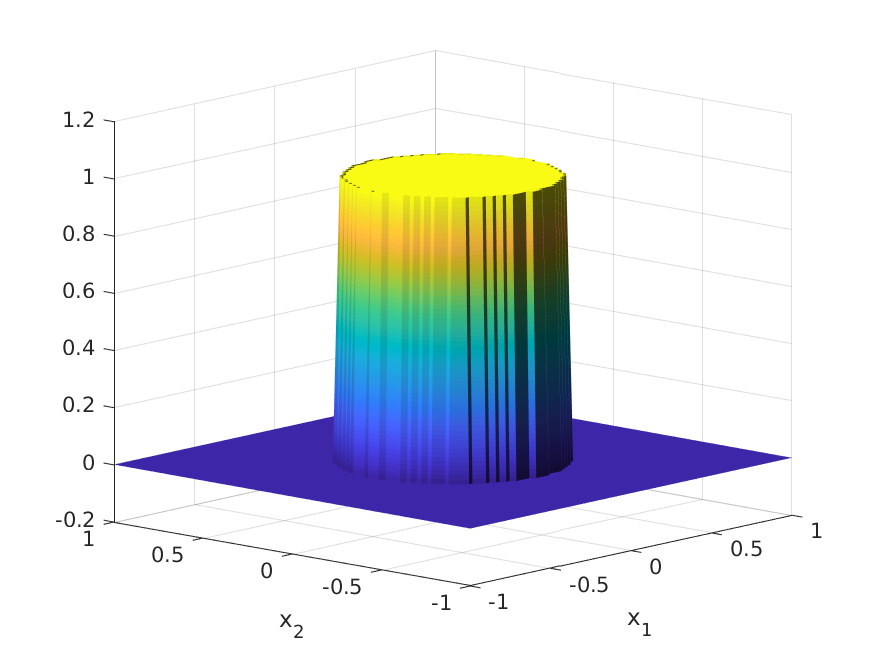} 
\includegraphics[width=.45\textwidth]{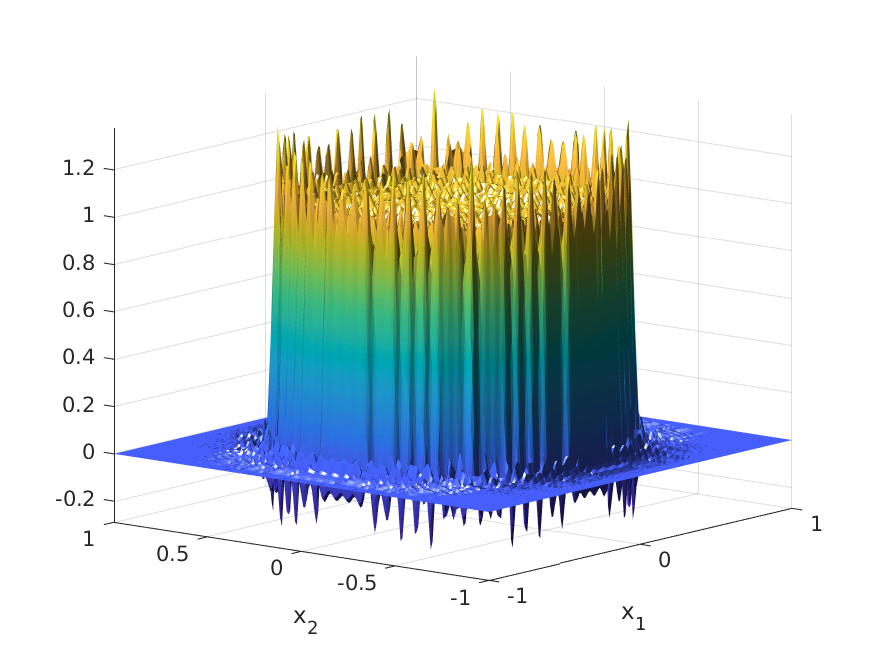}
 \caption{Degree 4 (left) semi-algebraic approximation, and Chebyshev polynomial approximation (right) of the indicator function of a disk.}
 \label{fig:onedisk}
 \end{figure}
Consider the indicator function
\[
f(\xx) :=\mathbb{I}_{\{\xx \in \R^2 : \xx_1^2+\xx_ 2^2 \leq 1/4\}}(\xx)
\]
of a centered disk of radius $1/2$. We compute the emprical moments obtained by sampling $100^2$ points on a uniform grid of $\X:=[-1,1]^2$. With this input, our algorithm computes the degree 8 semi-algebraic approximation reported on Figure \ref{fig:onedisk}, to be compared with the Chebyshev polynomial approximation obtained from $100^2$ points by the {\tt chebfun2} command, showing the typical Gibbs phenomenon.
\begin{figure}
\centering
\includegraphics[width=.45\textwidth]{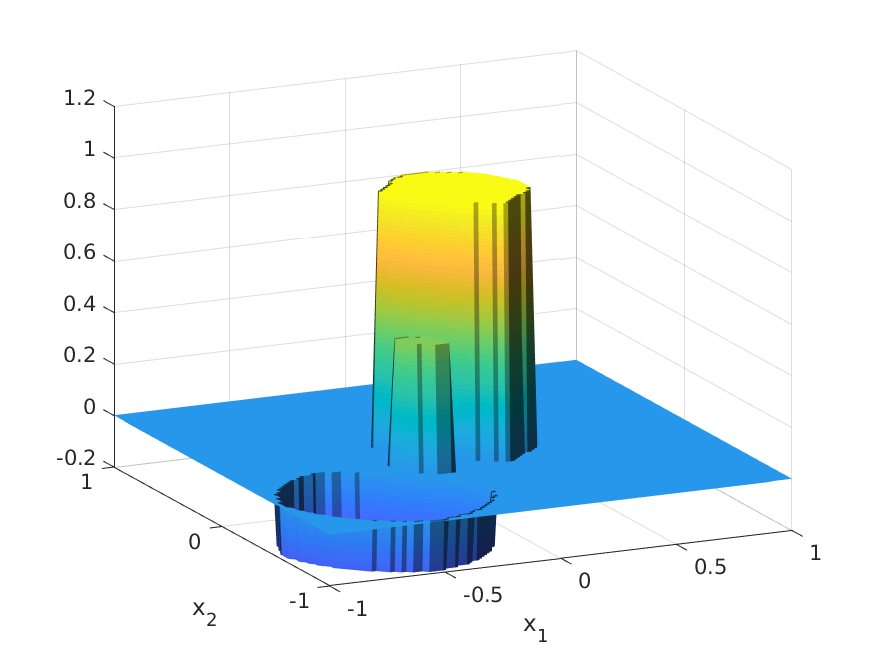} 
\includegraphics[width=.45\textwidth]{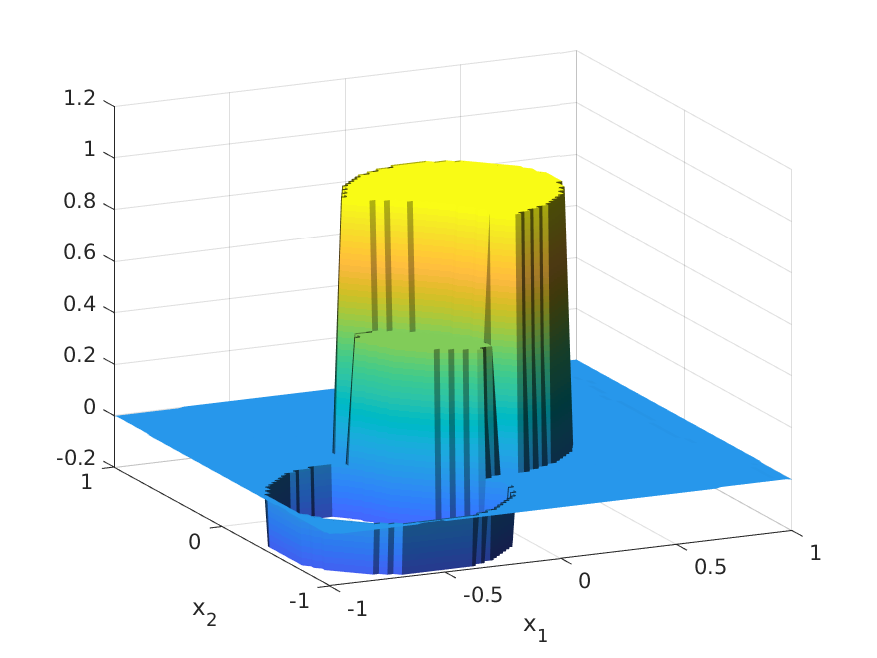}
 \caption{Degree 8 (left) and degree 16 (right) semi-algebraic approximations of the superposition of signed indicator functions of two disks.}
 \label{fig:twodisks}
 \end{figure}
 \begin{figure}
\centering
\includegraphics[width=.45\textwidth]{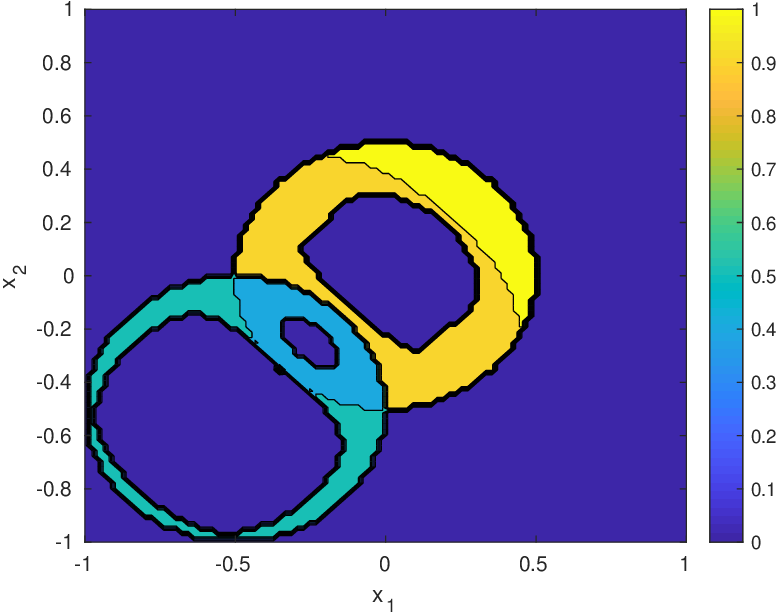} 
\includegraphics[width=.45\textwidth]{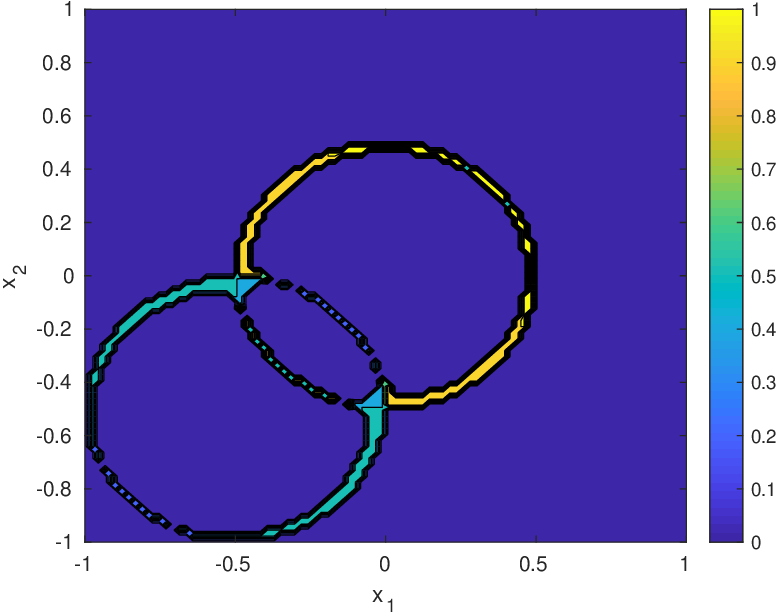}
 \caption{Contour plots of the absolute error between the two disk indicator function and its degree 8 (left) and degree 16 (right) semi-algebraic approximations of Figure \ref{fig:twodisks}.}
 \label{fig:twodiskserror}
 \end{figure}
 
We perform the same computations for the piecewise constant function
\[
f(\xx) :=\mathbb{I}_{\{\xx \in \R^2 : \xx_1^2+\xx_ 2^2 \leq 1/4\}}(\xx)-\frac{1}{2}\mathbb{I}_{\{\xx \in \R^2 : (\xx_1+\frac{1}{2})^2+(\xx_ 2+\frac{1}{2})^2 \leq 1/4\}}(\xx)
\]
obtained as a superposition of signed indicator functions of two disks. Its degree 8 and 16 semi-algebraic approximations are reported on Figure \ref{fig:twodisks}. The absolute pointwise error between the approximations and the original function is displayed on Figure \ref{fig:twodiskserror}.
 
\subsection{Discontinuous solutions of non-linear PDEs}

 \begin{figure}
 \centering
 \includegraphics[width=.75\textwidth]{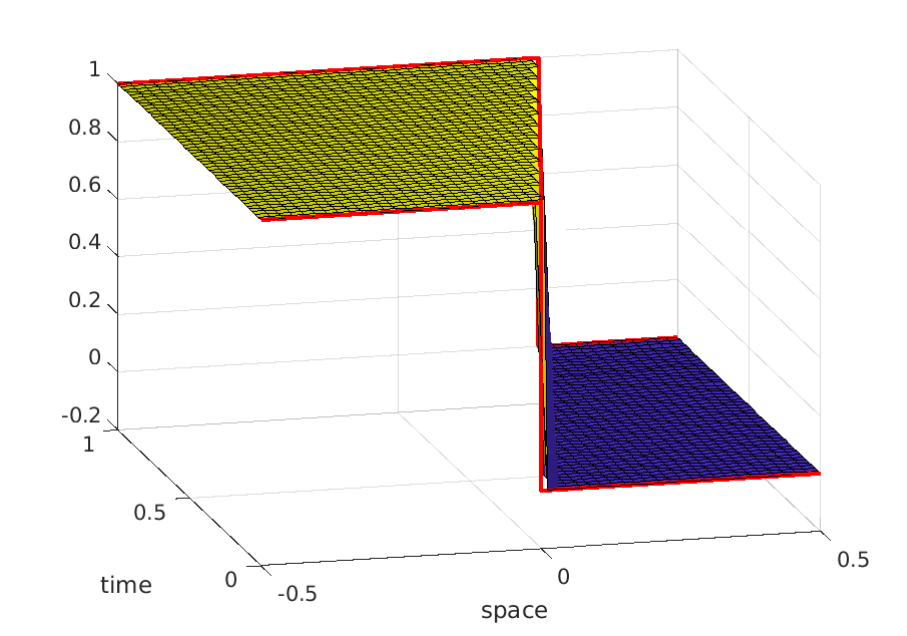}
 \caption{Graph of the solution (a function of time and space) recovered from approximate moments for the Burgers PDE: Discontinuous initial data. The shock propagates linearly with time.}
 \label{fig:burgers-shock-initialtime}
 \end{figure}

\begin{figure}
\centering
\includegraphics[width=.75\textwidth]{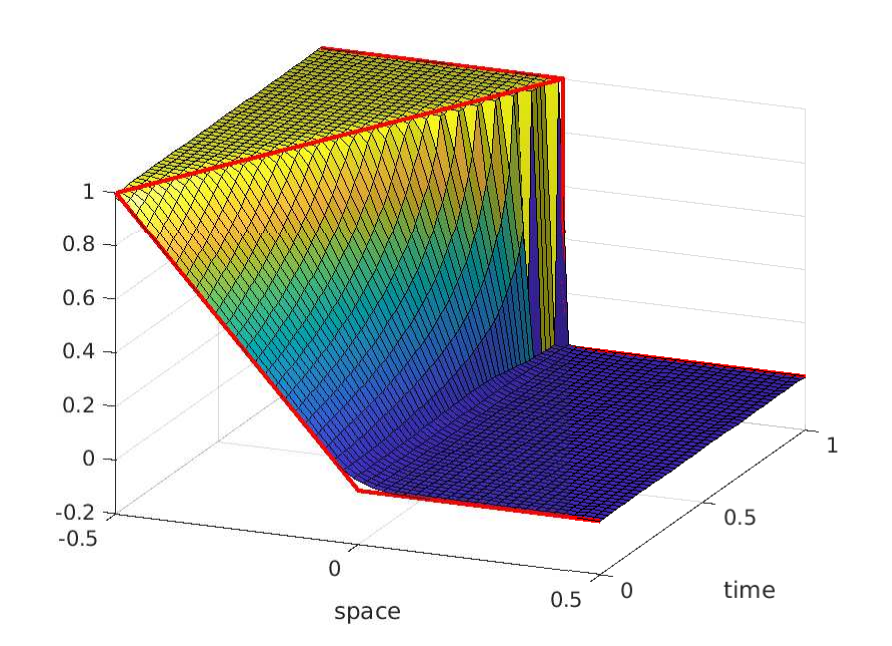} 
\caption{Graph of the solution (a function of time and space) recovered from approximate moments for the Burgers PDE: Initial condition chosen to produce a shock at final time.}
\label{fig:burgers-shock-finaltime}
\end{figure}

  \begin{figure}
  \centering
  \includegraphics[width=.75\textwidth]{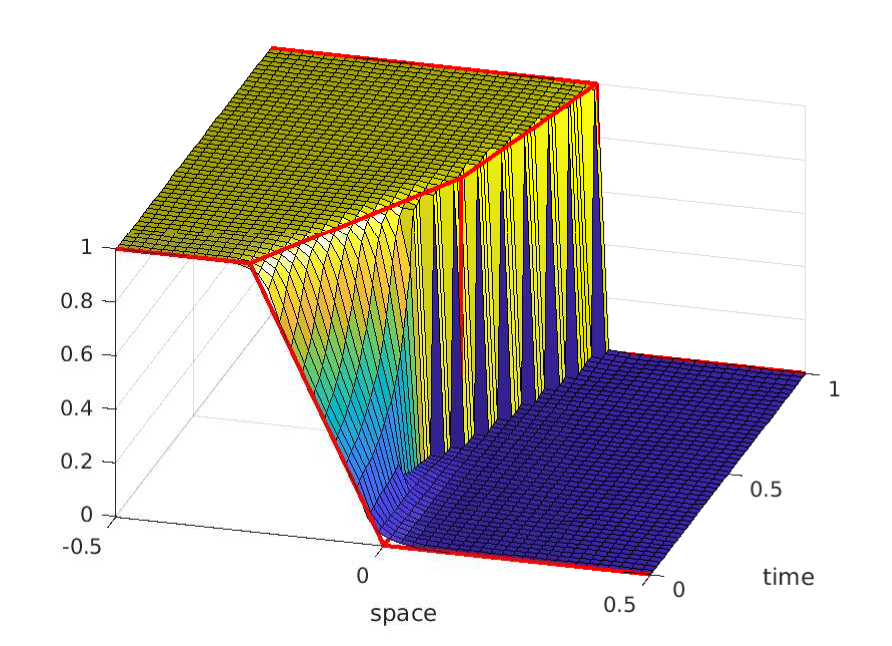}  
  \caption{Graph of the solution (a function of time and space) recovered from approximate moments for the Burgers PDE: Initial function chosen such that the shock occurs at $t=\frac{1}{2}$.}
  \label{fig:burgers-shock-midtime}
  \end{figure}

In \cite{marx2018moment}, the moment-SOS hierarchy is applied to solve numerically a class of non-linear PDEs for which we known that classical (i.e. differentiable) solutions do not exist. The advantage of optimizing over occupation measures is that they can be supported on graphs of weak (i.e. possibly discontinuous) solutions. Let us show how approximate moments of these measures computed by semidefinite programming can be processed by our algorithm so as to recover these discontinuous solutions. 

We focus on the Burgers equation and choose the initial data (a function of one space coordinate, at time zero) in a way that at a given time a shock appears, i.e. the solution becomes a discontinuous function of the space coordinate. Once the shock appeared, it propagates through, i.e. the discontinuity remains but its location varies. 
In Figures \ref{fig:burgers-shock-initialtime}, \ref{fig:burgers-shock-finaltime}, and \ref{fig:burgers-shock-midtime} we show the graphs obtained from the moment relaxations proposed in \cite{marx2018moment}. In all cases we use the 969 triviate moments of degree 16 of the occupation measure (supported on time, space, and solution) to recover the graph of the approximated solution. For comparison we also sketch the analytic solution with red lines. 

For the graphs in Figures \ref{fig:burgers-shock-initialtime} and \ref{fig:burgers-shock-finaltime}, the approximated moments match the analytic moments up to an error of the order of $10^{-8}$. Our semi-algebraic approximations are almost identical to the analytic solution.

For the graph in Figure \ref{fig:burgers-shock-midtime} the approximated moments are noticeably incorrect, i.e., the error is of order $10^{-4}$. Nevertheless, our semi-algebraic approximation is able to reproduce the graph of the solution quite accurately. In particular the propagation of the shock is retrieved from the moment data. However, the approximation is erroneous when the solution passes over from its continuous to its discontinuous part. 

\section{Conclusion}

\label{sec_conclusion}

In this paper, we describe a new technique to estimate discontinuous functions from moment data, based on  Christoffel-Darboux  kernels. Instead of using polynomial or piecewise polynomial approximants, we use a class of semi-algebraic approximants, namely arguments of minima of polynomials. This is another occurrence of a lifting technique: instead of using only moments depending linearly on the function so as to recover directly the function, we use also moments depending non-linearly on the function so as to approximate the support of a measure concentrated on the graph of the function. We provide functional analytic and geometric convergence proofs. Finally, some numerical examples illustrate the efficiency of our algorithm.

We believe that this work opens the way to many other further research lines:
\begin{itemize}
\item When applying the Moment-SOS hierarchy, the moments are numerical approximations of the real ones. It would be interesting to provide a sensibility analysis of the application of our algorithm for the real moments and the approximated ones. We believe that such an analysis can be performed, since promising results were achieved recently in \cite{klep2018minimizer} for the case of zero dimensional manifolds, i.e. unions of finitely many points.
\item It could also be interesting to investigate in a more quantitative way why the Gibbs phenomenon might be avoided or at least attenuated with the technique we provide. We believe that it is mainly due to the semi-algebraic point of view we are following. 
\item We could also check whether our algorithm works as well when considering only the knowledge of Fourier coefficients, namely moments depending linearly on the function that we want to approximate. In many problems, this is the only measurement that we might have. This is therefore a partial moment information, and we may want to complement it with estimates of higher degree moments. This makes the problem challenging.
\end{itemize}

\section{Acknowledgments}

We are grateful to Quentin Vila for his technical input on discontinuous solutions of PDEs, and to Milan Korda, Victor Magron and Matteo Tacchi for interesting discussions. This work was partly funded by the ERC Advanced Grant Taming and was also conducted in the framework of the regional programme ”Atlanstic 2020, Research, Education and Innovation in Pays de la Loire, supported by the French Region Pays de la Loire and the European Regional Development Fund. E. Pauwels and J.B. Lasserre are also partially supported by the AI Interdisciplinary Institute ANITI funding through the french program “Investing for the Future PIA3”, under the Grant agreement number ANR-19-PI3A-0004.

\bibliographystyle{plain}

\end{document}